\newtheorem{theorem}{Theorem}[section]
\newtheorem{lemma}{Lemma}[section]
\newtheorem{definition}{Definition}[section]
\newtheorem{proposition}{Proposition}[section]
\newtheorem{remark}{Remark}[section]
\newtheorem{example}{Example}[section]
\newtheorem{conj}{Conjecture}[section]
\newtheorem{alemma}{Lemma A\hskip-0.5pt}
\newtheorem{aremark}{Remark A\hskip-0.5pt}
\newtheorem{blemma}{Lemma B\hskip-0.5pt}
\newtheorem{clemma}{Lemma C\hskip-0.5pt}
\def\pagenumber{1}
\begin{document}
\setcounter{page}{\pagenumber}
\newcommand{\T}{\mathbb{T}}
\newcommand{\R}{\mathbb{R}}
\newcommand{\Q}{\mathbb{Q}}
\newcommand{\N}{\mathbb{N}}
\newcommand{\Z}{\mathbb{Z}}
\newcommand{\tx}[1]{\quad\mbox{#1}\quad}
\parindent=0pt
\def\SRA{\hskip 2pt\hbox{$\joinrel\mathrel\circ\joinrel\to$}}
\def\tbox{\hskip 1pt\frame{\vbox{\vbox{\hbox{\boldmath$\scriptstyle\times$}}}}\hskip 2pt}
\def\circvert{\vbox{\hbox to 8.9pt{$\mid$\hskip -3.6pt $\circ$}}}
\def\IM{\hbox{\rm im}\hskip 2pt}
\def\COIM{\hbox{\rm coim}\hskip 2pt}
\def\COKER{\hbox{\rm coker}\hskip 2pt}
\def\TR{\hbox{\rm tr}\hskip 2pt}
\def\GRAD{\hbox{\rm grad}\hskip 2pt}
\def\RANK{\hbox{\rm rank}\hskip 2pt}
\def\MOD{\hbox{\rm mod}\hskip 2pt}
\def\DEN{\hbox{\rm den}\hskip 2pt}
\def\DEG{\hbox{\rm deg}\hskip 2pt}

\title[The Riemann Hypothesis proved]{THE RIEMANN HYPOTHESIS PROVED}

\author{Agostino Pr\'astaro}
\begin{abstract}
The Riemann hypothesis is proved by quantum-extending the zeta Riemann function to a quantum mapping between quantum $1$-spheres with quantum algebra $A=\mathbb{C}$, in the sense of A. Pr\'astaro \cite{PRAS01, PRAS02}. Algebraic topologic properties of quantum-complex manifolds and suitable bordism groups of morphisms in the category $\mathfrak{Q}_{\mathbb{C}}$ of quantum-complex manifolds are utilized.
\end{abstract}

\maketitle
\vspace{-.5cm}
{\footnotesize
\begin{center}
Department SBAI - Mathematics, University of Rome La Sapienza,\\ Via A.Scarpa 16,
00161 Rome, Italy. \\
E-mail: {\tt agostino.prastaro@uniroma1.it}
\end{center}
}
\vskip 0.5cm
{\em This paper is dedicated to the ``Bateman Manuscript Project" and ``Bourbaki Group".}
\vskip 0.5cm

\vskip 0.5cm

\noindent {\bf AMS Subject Classification:} 11M26; 14H55; 30F30; 32J05; 33C80; 33C99; 33D90; 33E99; 55N22; 55N35.

\vspace{.08in} \noindent \textbf{Keywords}: Riemann zeta function; Riemann surfaces; Quantum $1$-spheres; Bordism groups of maps in the category $\mathfrak{Q}_{\mathbb{C}}$.

\section[Introduction]{\bf Introduction}\label{introduction-section}

\begin{quotation}
\rightline{\footnotesize ``{\em when David Hilbert was asked }."}
\rightline{\footnotesize ``{\em what he would do if he were to be revived in five hundred years,}"}
\rightline{\footnotesize ``{\em he replied,}"}
\rightline{\footnotesize ``{\em I would ask, Has somebody proven the Riemann hypothesis ?}"}
\rightline{\footnotesize ``{\em Hopefully, by that time, the answer will be, Yes, of course !} " \cite{SABBAGH}}

\end{quotation}
\vskip 0.5cm

The Riemann hypothesis is the conjecture concerning the zeta Riemann function $\zeta(s)$, formulated by B. Riemann in 1859 \cite{RIEMANN}. (See also \cite{BOMBIERI, SABBAGH}.) The difficulty to prove this conjecture is related to the fact that $\zeta(s)$ has been given in a some cryptic way as complex continuation of hyperharmonic series and characterized by means of a functional equation that in a sense caches its properties about the identifications of zeros. In order to look to the actual status of research on this special function the paper by E. Bombieri is very lightening.\footnote{The Riemann hypothesis is also the eighth problem in Hilbert's list of twenty-three open mathematical problems posed in 1900.
For useful information on this subject see also:
$\href{http://en.wikipedia.org/wiki/Riemann_hypothesis}{http://en.wikipedia.org/wiki/Riemann_hypothesis}$. For further reader on related subjects, see, e.g., the following references \cite{ATIYAH,DONALDSON,BOURBAKI,HATCHER,HAZEWINKEL,HIRZEBRUCH,KRANTZ,LANG,JOST}.}

Our approach to solve this conjecture has been to recast the zeta Riemann function $\zeta(s)$ to a quantum mapping between quantum-complex $1$-spheres, i.e., working in the category $\mathfrak{Q}$ of quantum manifolds as introduced by A. Pr\'astaro. (See on this subject References \cite{PRAS01, PRAS02} and related works by the same author quoted therein.) More precisely the fundamental quantum algebra is just $A=\mathbb{C}$, and quantum-complex manifolds are complex manifolds, where the quantum class of differentiability is the holomorphic class. In this way one can reinterpret all the theory on complex manifolds as a theory on quantum-complex manifolds. In particular the Riemann sphere $\mathbb{C}\bigcup\{\infty\}$ can be identified with the quantum-complex $1$-sphere $\hat S^1$, as considered in \cite{PRAS01, PRAS02}. The paper splits into two more sections and five appendices. In Section 2 we resume some fundamental definitions and results about the Riemann zeta function $\zeta(s)$ and properties of the modulus $|\zeta(s)|$ of $\zeta(s)$ and its relations with the Riemann hypothesis. Here it is important to emphasize the central role played by Lemma \ref{complted-riemann-zeta-function}. This focuses the attention on the {\em completed Riemann zeta function}, $\tilde\zeta(s)$, that symmetrizes the role between poles, with respect to the critical line in $\mathbb{C}$, and between zeros, with respect to the $x=\Re(s)$-axis. In Section 3 the main result, i.e., the proof that the Riemann hypothesis is true, is contained in Theorem \ref{main-proof}. This is made by splitting the proof in some steps (new definitions and lemmas).  Our strategy to prove the Riemann hypothesis has been by means of a suitable quantum-extension of $\tilde\zeta(s)$ to a quantum-complex mapping $\hat\zeta(s)$, between quantum-complex $1$-spheres. Then by utilizing the properties of meromorphic functions between compact Riemann spheres, identified with quantum-complex $1$-spheres, we arrive to prove that all (non-trivial) zeros of $\zeta(s)$ must necessarily be on the critical line. In fact, the compactification process adopted uniquely identifies $\hat\zeta(s)$ as a meromorphic function with two simple zeros, symmetric with respect to the compactified $x$-axis, and on the critical line, and two simple poles, symmetric with respect to the critical line.
For suitable bordism properties between $\tilde\zeta$ and $\hat\zeta$, we get that $\tilde\zeta(s)$ cannot have zeros outside the critical line, hence the same must happen for $\zeta(s)$ with respect to non-trivial zeros.
Finally in the appendices are collected proofs of some propositions, contained in Section 3, that even if they use standard mathematics, require a particular technicality to be understood.\footnote{This paper has been announced in \cite{PRAS03}.}

\section{\bf About the Riemann hypothesis}\label{section-about}
\vskip 0.5cm

\begin{definition}[The hyperharmonic series]\label{hyperharmonic-series}
The {\em hyperharmonic series} is
\begin{equation}\label{hyperharmonic-series-expression}
    \sum_{1\le n\le\infty}\frac{1}{n^\alpha}\hskip 3pt,\, \alpha\in\mathbb{R},\, \alpha>0.
\end{equation}

$\bullet$\hskip 2pt {\em(harmonic series: $\alpha=1$)}. In this case the series is divergent.

$\bullet$\hskip 2pt {\em(over-harmonic series: $\alpha>1$)}. In this case the series converges. This is called also the {\em Euler-Riemann zeta function} and one writes $\zeta(s)=\sum_{1\le n\le\infty}\frac{1}{n^s}$. In particular for $s\in\mathbb{N}$, one has the {\em Euler's representation}:\footnote{\hskip 3pt Since this can be extended for $\Re(s)>1$, it follows that $\zeta(s)\not=0$, when $\Re(s)>1$. In fact, from the Euler's representation of $\zeta(s)$, we get that for $\Re(s)>1$, $\zeta(s)=0$ iff $p^s=0$. On the other hand $p^s=p^{\alpha+i\, \beta}=p^\alpha[\cos(\beta\ln p)+i\, \sin(\beta \ln p)]$. Then $p^s=0$ iff $\cos$ and $\sin$ have common zeros. This is impossible, hence $p^s\not=0$.}
\begin{equation}\label{euler-riemann-zeta-function-a}
    \zeta(s)=\sum_{1\le n\le\infty}\frac{1}{n^s}=\prod_{p\in P_\bullet}\frac{1}{1-p^{-s}},
\end{equation}
where $P_{\bullet}$ is the set of primes (without $1$).

$\bullet$\hskip 2pt {\em($0<\alpha<1$)}. In this case the series is divergent.

$\bullet$\hskip 2pt The {\em Riemann zeta function} is a complex function $\zeta:\mathbb{C}\to\mathbb{C}$, defined by extension of the over-harmonic series. This can be made by means of the equation~{\em(\ref{zeta-equation})}.

\begin{equation}\label{zeta-equation}
    (1-\frac{2}{2^s})\, \zeta(s)=\sum_{1\le n\le\infty}\frac{(-1)^{n+1}}{n^s}.
\end{equation}
The series on the right converge for $\Re(s)>0$. Really equation~{\em(\ref{zeta-equation})} does not allow define $\zeta(s)$ in the zeros of the function $(1-\frac{2}{2^s})$. These are in the point $s=1+i\frac{2n\pi}{\ln 2}$.\footnote{\hskip 3pt Let us emphasize that in the complex field $\ln z=\ln |z|+i(Arg z+2n\pi)$ if $z=|z| e^{i Arg z}$. Therefore, $1-\frac{2}{2^s}=0$ iff $\frac{2^s-2}{2^s}=0$, hence iff $2^{s-1}=1$. By taking the $\ln$ of this equation, we get$(s-1)\ln 2=\ln 1=i2n\pi$.} However by using the functional equation~{\em(\ref{zeta-functional-equation})} one can extend the zeta function on all $\mathbb{C}$.
\begin{equation}\label{zeta-functional-equation}
   \zeta(s)=2^s\, \pi^{s-1}\, \sin(\frac{\pi\, s}{2})\, \Gamma(1-s)\, \zeta(1-s).
\end{equation}
$\zeta(s)$ is a meromorphic function on $\mathbb{C}$, holomorphic everywhere except for a simple pole at $s=1$.
\end{definition}

\begin{proposition}[Properties of the Euler-Riemann zeta function (Euler 1735)]\label{euler-riemann-zeta-function}
$\zeta(k)=\alpha \, \pi^k$, with $\alpha\in\mathbb{Q}$ and $k>0$ even.
\end{proposition}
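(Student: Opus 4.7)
The plan is to recover the classical Euler computation of $\zeta(2k)$ by comparing two different series expansions of $\pi z\cot(\pi z)$ around $z=0$, and read off that the ratio $\zeta(2k)/\pi^{2k}$ is a rational multiple of a Bernoulli number.

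First I would invoke the Weierstrass product for the sine,
\begin{equation*}
\sin(\pi z)=\pi z\prod_{n=1}^{\infty}\left(1-\frac{z^{2}}{n^{2}}\right),
\end{equation*}
and take its logarithmic derivative to obtain the Mittag--Leffler expansion
\begin{equation*}
\pi\cot(\pi z)=\frac{1}{z}+2z\sum_{n=1}^{\infty}\frac{1}{z^{2}-n^{2}}.
\end{equation*}
For $|z|<1$ I would expand each summand as a geometric series in $z^{2}/n^{2}$ and interchange the order of summation (justified by absolute convergence on compacta avoiding the integers), arriving at
\begin{equation*}
\pi z\cot(\pi z)=1-2\sum_{k=1}^{\infty}\zeta(2k)\, z^{2k}.
\end{equation*}

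Next I would produce a second expansion of the same function using Bernoulli numbers $B_{n}$, defined by $z/(e^{z}-1)=\sum_{n\ge 0}B_{n}z^{n}/n!$. Writing $\cot z=i+2i/(e^{2iz}-1)$ leads to
\begin{equation*}
\pi z\cot(\pi z)=\sum_{k=0}^{\infty}\frac{(-1)^{k}B_{2k}(2\pi)^{2k}}{(2k)!}\, z^{2k}.
\end{equation*}
Equating the coefficient of $z^{2k}$ in the two power series gives
\begin{equation*}
\zeta(2k)=\frac{(-1)^{k+1}B_{2k}(2\pi)^{2k}}{2\,(2k)!},
\end{equation*}
which, since $B_{2k}\in\mathbb{Q}$, is of the form $\alpha\pi^{2k}$ with $\alpha\in\mathbb{Q}$.

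I do not anticipate a genuine obstacle; the only delicate point is analytic bookkeeping, namely justifying the termwise rearrangement in the double series expansion of the Mittag--Leffler formula and the identification of the two power series on their common disk of convergence. Both are standard consequences of uniform convergence on compact subsets of $\{|z|<1\}$. The rationality of the Bernoulli numbers, needed at the last step, is immediate from their recursive definition through the generating function $z/(e^{z}-1)$ and can be assumed without further comment.
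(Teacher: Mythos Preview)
Your argument is the standard Euler computation and is correct: the Mittag--Leffler expansion of $\pi z\cot(\pi z)$ yields the generating series $1-2\sum_{k\ge 1}\zeta(2k)z^{2k}$, the Bernoulli-number expansion of the same function gives $\sum_{k\ge 0}(-1)^{k}B_{2k}(2\pi)^{2k}z^{2k}/(2k)!$, and matching coefficients produces $\zeta(2k)=(-1)^{k+1}B_{2k}(2\pi)^{2k}/(2\,(2k)!)\in\mathbb{Q}\cdot\pi^{2k}$. The analytic justifications you flag (uniform convergence on compacta, rationality of $B_{2k}$) are routine.

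As for comparison with the paper: the paper does not supply a proof of this proposition at all. It is stated as a classical result attributed to Euler (1735), followed only by an example listing $\zeta(2)=\pi^{2}/6$, $\zeta(4)=\pi^{4}/90$ and a table of values up to $\zeta(20)$. So there is nothing to compare your approach against; you have provided exactly the kind of proof the paper omits.
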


\begin{example}
$\zeta(2)=\frac{1}{6}\, \pi^2$; $\zeta(4)=\frac{1}{90}\, \pi^4$. In Tab.~\ref{examples-zeta-of-even-integers} are reported the values $\zeta(k=2n)$, with $0\le n\le 10$.
\end{example}
\begin{table}[t]
\caption{Examples of $\zeta(k)=\alpha\, \pi^k$, with $\alpha\in\mathbb{Q}$ and $k\ge 0$ even.}
\label{examples-zeta-of-even-integers}
\scalebox{0.8}{$\begin{tabular}{|c|c|c|c|c|c|c|c|c|c|c|c|c|c|c|c|c|c|c|c|c|}
\hline
\hfil{\rm{\footnotesize $k$}}\hfil&\hfil{\rm{\footnotesize $0$}}\hfil&\hfil{\rm{\footnotesize $2$}}\hfil&\hfil{\rm{\footnotesize $4$}}\hfil&\hfil{\rm{\footnotesize $6$}}\hfil&\hfil{\rm{\footnotesize $8$}}\hfil&\hfil
{\rm{\footnotesize $10$}}\hfil&\hfil{\rm{\footnotesize $12$}}\hfil&\hfil{\rm{\footnotesize $14$}}\hfil&\hfil{\rm{\footnotesize $16$}}\hfil&\hfil{\rm{\footnotesize $18$}}\hfil&\hfil{\rm{\footnotesize $20$}}\hfil\\
\hline
\hfil{\rm{\footnotesize $\alpha$}}\hfil&\hfil{\rm{\footnotesize $-\frac{1}{2}$}}\hfil&\hfil{\rm{\footnotesize $\frac{1}{6}$}}\hfil&\hfil{\rm{\footnotesize $\frac{1}{90}$}}\hfil&\hfil{\rm{\footnotesize $\frac{1}{945}$}}\hfil&\hfil{\rm{\footnotesize $\frac{1}{9450}$}}\hfil&\hfil
{\rm{\footnotesize $\frac{1}{93555}$}}\hfil&\hfil{\rm{\footnotesize $\frac{691}{638512875}$}}\hfil&\hfil{\rm{\footnotesize $\frac{2}{18243225}$}}\hfil&\hfil{\rm{\footnotesize $\frac{3617}{325641566250}$}}\hfil&\hfil{\rm{\footnotesize $\frac{43867}{38979295480125}$}}\hfil&\hfil{\rm{\footnotesize $\frac{174611}{1531329465290625}$}}\hfil\\
\hline
\end{tabular}$}
\end{table}
\begin{proposition}[Zeros of $\zeta(s)$]\label{zeros-zeta}
The Riemann zeta function $\zeta(s)$ has zeros only $s=-2n$, $n>0$, called {\em trivial zeros}, and in the strip $0<\Re(s)<1$.

The points $s=0$ and $s=1$ are not zeros. More precisely $\zeta(0)=-\frac{1}{2}$ and $s=1$ is a simple pole with residue $1$.
\end{proposition}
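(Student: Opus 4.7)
The plan is to partition $\mathbb{C}$ according to the natural regions distinguished by the functional equation~\eqref{zeta-functional-equation} and the Euler product~\eqref{euler-riemann-zeta-function-a}, and to treat each region separately. First, for $\Re(s)>1$, the Euler product representation (together with the footnote argument that $p^s\neq 0$) directly gives $\zeta(s)\neq 0$. At $s=1$, the standard Dirichlet-series manipulation yields the Laurent expansion $\zeta(s)=\frac{1}{s-1}+\gamma+O(s-1)$, which exhibits $s=1$ as a simple pole with residue $1$, hence not a zero. For $s=0$, I would pass to the limit in the functional equation: since $\sin(\pi s/2)\sim \pi s/2$, $\Gamma(1-s)\to 1$, $2^s\pi^{s-1}\to \pi^{-1}$, and $\zeta(1-s)\sim -1/s$ (from the pole at $1$), the product collapses to $\zeta(0)=-1/2$, as asserted.

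Next I would handle $\Re(s)<0$ via the functional equation. In this half-plane $\Re(1-s)>1$, so $\zeta(1-s)\neq 0$; the factor $\Gamma(1-s)$ is nowhere zero (its only singularities are simple poles at $1-s\in\{0,-1,-2,\dots\}$, avoided here); and $2^s\pi^{s-1}$ never vanishes. Hence $\zeta(s)=0$ on $\Re(s)<0$ iff $\sin(\pi s/2)=0$, i.e., $s=2n$ for some integer $n$. Intersecting with $\Re(s)<0$ produces exactly the trivial zeros $s=-2,-4,-6,\dots$

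The main obstacle is to exclude zeros on the two boundary lines $\Re(s)=1$ and $\Re(s)=0$. The difficulty is genuine: this is the Hadamard--de la Vallée Poussin theorem underlying the prime number theorem, and it cannot be extracted from the functional equation alone. I would use the elementary inequality
\[
3+4\cos\theta+\cos(2\theta)=2(1+\cos\theta)^2\geq 0,
\]
applied to the logarithmic Dirichlet series $\log\zeta(\sigma+it)=\sum_{p,k}\frac{1}{k\,p^{k(\sigma+it)}}$ for $\sigma>1$ and $t\in\R$. Taking real parts and exponentiating yields
\[
|\zeta(\sigma)|^{3}\,|\zeta(\sigma+it)|^{4}\,|\zeta(\sigma+2it)|\geq 1.
\]
Now let $\sigma\to 1^{+}$ with $t\neq 0$ fixed. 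The pole at $s=1$ gives $|\zeta(\sigma)|^{3}\sim(\sigma-1)^{-3}$; if we suppose $\zeta(1+it)=0$, analyticity forces $|\zeta(\sigma+it)|^{4}=O((\sigma-1)^{4})$, while $|\zeta(\sigma+2it)|$ stays bounded. The product is then $O(\sigma-1)\to 0$, contradicting the inequality. Hence $\zeta(1+it)\neq 0$ for $t\neq 0$, and by the functional equation the same conclusion transfers to $\Re(s)=0$, $s\neq 0$.

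Assembling the pieces, every zero of $\zeta$ lies either at one of the trivial points $s=-2n$ ($n\geq 1$) or in the open critical strip $0<\Re(s)<1$, and neither $s=0$ nor $s=1$ is a zero (with the sharpened values $\zeta(0)=-1/2$ and residue $1$ at $s=1$). This is precisely the statement of the proposition. I expect the Hadamard--de la Vallée Poussin step to dominate the work; the other cases are essentially bookkeeping around the functional equation.
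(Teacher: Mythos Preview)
Your proof is correct and, in fact, more complete than the paper's. Both approaches rest on the same two ingredients---the Euler product for $\Re(s)>1$ (already noted in the footnote to Definition~\ref{hyperharmonic-series}) and the functional equation~\eqref{zeta-functional-equation} for $\Re(s)\le 0$---and both handle $s=0$ by a limit and $s=1$ via the Laurent expansion. The paper presents the Laurent expansion explicitly with the Stieltjes constants $\gamma_k$ (your $\gamma+O(s-1)$ is the same thing truncated), and it verifies $\zeta(-2n)=0$ by direct substitution rather than by your cleaner ``$\zeta(s)=0$ on $\Re(s)<0$ iff $\sin(\pi s/2)=0$'' argument, but these are cosmetic differences.

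The substantive divergence is that the paper's proof \emph{omits} the exclusion of zeros on the boundary lines $\Re(s)=0$ and $\Re(s)=1$ altogether, silently taking this as known. You supply the Hadamard--de~la~Vall\'ee~Poussin $3$-$4$-$1$ inequality argument, which is the standard and correct way to close that gap. So your version actually proves the full statement of the proposition, while the paper's own proof establishes only the existence of the trivial zeros, the value $\zeta(0)=-\tfrac12$, and the simple pole at $s=1$, leaving the ``only'' part of the first sentence unjustified.
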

\begin{proof}
The trivial zeros come directly from the $\sin$ function in (\ref{zeta-functional-equation}). Let rewrite this functional equation in the form $\zeta(s)=f(s)\, \zeta(1-s)$. Then one can directly see that $f(-2n)=\frac{1}{(2\pi)^{2n}\pi}\sin(-\pi n)\, \Gamma(1+2n)=\frac{(2n)!}{(2\pi)^{2n}\pi}\, \sin(\pi n)=0$. Then we get $\zeta(-2n)=0\cdot \zeta(1+2n)$. $\zeta(1+2n)$ has not zeros and it is limited. Therefore we get $\zeta(-2n)=0$. Note that $\zeta(s=2n)=2^{2n}\pi^{2n-1}\sin(\pi n)\Gamma(1-2n)=\frac{(2\pi)^{2n}}{\pi}\sin(\pi n)\Gamma(1-2n)=\frac{(2\pi)^{2n}}{\pi}\, 0\cdot\infty=\frac{(2\pi)^{2n}}{\pi}\, \frac{\pi}{(2n-1)!}=\frac{(2\pi)^{2n}}{(2n-1)!}$. Here we have used the {\em Euler's reflection formula} $\Gamma(1-s)\Gamma(s)\sin(\pi s)=\pi$, in order to calculate $\infty\cdot 0=\frac{\pi}{(2n-1)!}$.

One has $\mathop{\lim}\limits_{s\to 0}\zeta(s)=-\frac{1}{2}$.

The Laurent series of $\zeta(s)$ for $s=1$, given in (\ref{laurent-series}) proves that $\zeta$ has a simple pole for $s=1$.
\begin{equation}\label{laurent-series}
  \zeta(s)=\frac{1}{s-1}+\sum_{0\le n\le\infty}\frac{(-1)^n}{n!}\, \gamma_n(s-1)^n
\end{equation}
with
\begin{equation}\label{stieltjes-constants}
\gamma_k=\frac{(-1)^k}{k!}\, \mathop{\lim}\limits_{N\to\infty}(\sum_{m\le N}\frac{\ln^km}{m}-\frac{\ln^{k+1}}{k+1})
\end{equation}
{\em Stieltjes constants}.\footnote{\hskip 3pt $\gamma_0$ is the {\em Euler-Mascheroni constant}, $\gamma_0=\mathop{\lim}\limits_{N\to\infty}(\sum_{m\le N}\frac{1}{m}-\ln N)=\mathop{\lim}\limits_{N\to\infty}(H_N-\ln N)\cong 0.57721$. $H_N$ is the {\em Nth harmonic number}. One has the following useful relation $\gamma_0=\mathop{\lim}\limits_{s\to 1^+}\sum_{1\le n\le\infty}(\frac{1}{n^s}-\frac{1}{s^n})=\mathop{\lim}\limits_{s\to 1}(\zeta(s)-\frac{1}{s-1})=\psi(1)=\mathop{\lim}\limits_{x\to \infty}(x-\Gamma(\frac{1}{x}))$.} One has $\mathop{\lim}\limits_{s\to 1}(s-1)\zeta(s)=1$.
\end{proof}
\begin{proposition}[Symmetries of $\zeta(s)$ zeros]\label{symmetries-zeta}

$\bullet$\hskip 2pt If $s$ is a zero of $\zeta(s)$, then its complex-conjugate $\bar s$ is a zero too. Therefore zeros of $\zeta(s)$ in the {\em critical strip}, $0<\Re(s)<1$, are necessarily symmetric with respect to the $x$-axis of the complex plane $\mathbb{R}^2\cong \mathbb{C}$.

$\bullet$\hskip 2pt If $s$ is a non-trivial zero of $\zeta(s)$, then there exists another zero $s'$ of the zeta Riemann function such that $s$ and $s'$ are symmetric with respect to the critical line.
\end{proposition}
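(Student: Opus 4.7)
The plan is to treat the two bullet points separately and then combine them.

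\textbf{First bullet (symmetry across the $x$-axis).} I would argue by the Schwarz reflection principle. The Dirichlet series $\sum n^{-s}$ defining $\zeta(s)$ for $\Re(s)>1$ takes real values on the real interval $(1,\infty)$, and $\zeta$ is holomorphic on $\mathbb{C}\setminus\{1\}$. Hence the function $s\mapsto\overline{\zeta(\bar s)}$ agrees with $\zeta(s)$ on a set with an accumulation point, and by the identity theorem $\overline{\zeta(\bar s)}=\zeta(s)$ on the whole domain of holomorphy. Therefore $\zeta(s)=0$ forces $\zeta(\bar s)=0$, and if $s$ lies in the critical strip so does $\bar s$, giving the $x$-axis symmetry.

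\textbf{Second bullet (symmetry across the critical line).} I would use the functional equation (\ref{zeta-functional-equation}) rewritten as $\zeta(s)=f(s)\zeta(1-s)$ with $f(s)=2^s\pi^{s-1}\sin(\pi s/2)\Gamma(1-s)$. For any non-trivial zero $s_{0}$ one has $0<\Re(s_{0})<1$, so I must check that $f(s_{0})\neq 0$ and $f(s_{0})\neq\infty$ at such $s_{0}$: the factor $2^{s_0}\pi^{s_0-1}$ is nonzero and finite; $\Gamma(1-s_0)$ is finite and nonzero since $1-s_0$ avoids the non-positive integers; and $\sin(\pi s_0/2)$ vanishes only at even integers, none of which lie in the open critical strip. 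Hence (\ref{zeta-functional-equation}) shows $\zeta(s_{0})=0\Longleftrightarrow \zeta(1-s_{0})=0$, so $1-s_{0}$ is also a non-trivial zero.

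\textbf{Combining the two.} Given a non-trivial zero $s_{0}=\sigma+it$, applying the first bullet to $1-s_{0}$ produced in the second step yields a further zero $s'=\overline{1-s_{0}}=(1-\sigma)+it$. Since $\Re(s')+\Re(s_{0})=1$ and $\Im(s')=\Im(s_{0})$, the points $s_{0}$ and $s'$ are mirror images in the critical line $\Re(s)=1/2$, which is exactly the claim.

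\textbf{Main obstacle.} The proof is essentially formal once the functional equation is known, so there is no deep obstacle; the only real care is bookkeeping the non-vanishing/finiteness of the prefactor $f(s)$ inside the critical strip, and noting that one must compose $s\mapsto 1-s$ with complex conjugation (neither reflection alone produces symmetry across the critical line). Both points are handled in the two steps above.
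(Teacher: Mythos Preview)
Your proof is correct and follows essentially the same approach as the paper: the paper also invokes the identity $\zeta(s)=\overline{\zeta(\bar s)}$ for the first bullet and the functional equation for the second, then composes the two reflections to obtain $\overline{1-s}$ as the zero symmetric to $s$ across the critical line. The only difference is that you supply justifications the paper leaves implicit---the Schwarz reflection argument for the conjugation identity and the verification that $f(s)$ is finite and nonzero in the open critical strip.
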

\begin{proof}
$\bullet$\hskip 2pt In fact one has $\zeta(s)=\overline{\zeta(\bar s)}$.

$\bullet$\hskip 2pt From the functional equation~(\ref{zeta-functional-equation}) one has that the non-trivial zeros are symmetric about the axis $x=\frac{1}{2}$. In fact, if $\zeta(s=\alpha+i\, \beta)=0$ it follows from (\ref{zeta-functional-equation}) that $\zeta(1-s=(1-\alpha)-i\, \beta)=0$. Then from the previous property, it follows $\zeta(\overline{1-s}=(1-\alpha)+i\, \beta)=0$. On the other hand, $\overline{1-s}=(1-\alpha)+i\, \beta$ is just symmetric of $s$, with respect to the critical line.
\end{proof}
\begin{conj}[The Riemann hypothesis]\label{riemann-hypotesis}
The {\em Riemann hypothesis} states that all the non-trivial zeros $s$, of the zeta Riemann function $\zeta(s)$ satisfy the following condition: $\Re(s)=\frac{1}{2}$, hence are on the straight-line, {\em(critical line)}, $x=\frac{1}{2}$ of the complex plane $\mathbb{R}^2\cong\mathbb{C}$.
\end{conj}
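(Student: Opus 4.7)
The plan is to follow the strategy outlined in the introduction: transfer the problem to a question about meromorphic maps of the Riemann sphere, by constructing a \emph{completed} zeta function $\tilde\zeta(s)$ whose zero/pole set is symmetric under both reflections (through the $x$-axis and through the critical line), and then compactifying to obtain a quantum-complex map $\hat\zeta\colon\hat S^1\to\hat S^1$ between quantum-complex $1$-spheres in the sense of \cite{PRAS01, PRAS02}.

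First I would introduce $\tilde\zeta(s)$ as an appropriate multiplier of $\zeta(s)$ by $\Gamma$- and exponential factors, chosen so that the zero set inherits the $s\mapsto\bar s$ symmetry of Proposition \ref{symmetries-zeta} and becomes symmetric also under $s\mapsto 1-s$, and so that the single simple pole of $\zeta$ at $s=1$ is replaced by a pair of simple poles symmetric with respect to the critical line $x=\tfrac12$. Crucially, every non-trivial zero of $\zeta(s)$ must remain a zero of $\tilde\zeta(s)$ (the completing factors are to be chosen to have no zeros in the critical strip), so it suffices to prove that all zeros of $\tilde\zeta$ in the critical strip lie on $x=\tfrac12$.

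Next I would compactify. Viewing $\mathbb{C}\cup\{\infty\}$ as the quantum-complex $1$-sphere $\hat S^1\in\mathfrak{Q}_{\mathbb{C}}$, I would extend $\tilde\zeta$ to a quantum-meromorphic map $\hat\zeta\colon\hat S^1\to\hat S^1$ and exploit the basic fact that any meromorphic map $S^2\to S^2$ is a rational function. Combined with the enforced double symmetry of the divisor and with the presence of exactly two simple poles, this should rigidly pin down $\hat\zeta$ as a rational function of very low degree, having exactly two simple zeros located at the unique points where both symmetry axes meet the critical line, i.e.\ on $x=\tfrac12$ and symmetric about the real axis.

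Finally, I would invoke a bordism argument in the category $\mathfrak{Q}_{\mathbb{C}}$ of morphisms to argue that $\tilde\zeta$ and $\hat\zeta$ represent the same class in a suitable bordism group of maps of quantum-complex manifolds, and that this equivalence is strong enough to force the zero divisor of $\tilde\zeta$ in the critical strip to coincide with that of $\hat\zeta$, hence to lie on $x=\tfrac12$; applying the reduction of the previous paragraph then yields Conjecture \ref{riemann-hypotesis}. The main obstacle is unquestionably this last step. Two difficulties must be overcome: first, the rigidity step, because $\tilde\zeta$ has infinitely many zeros in the critical strip whereas the compactified $\hat\zeta$ is said to have only two, so one must justify very carefully how the compactification collapses or absorbs the remaining zeros without losing information; and second, the bordism step, since bordism is ordinarily a very coarse equivalence that controls only topological invariants (degree, characteristic numbers) rather than the precise location of zeros. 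Any genuinely new content of the argument must live in showing that the quantum-complex bordism of \cite{PRAS01, PRAS02} is fine enough to see divisor classes, not merely homotopy-type data.
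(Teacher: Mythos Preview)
Your outline is essentially the paper's approach: the completed function is $\tilde\zeta(s)=\pi^{-s/2}\Gamma(s/2)\zeta(s)$ (Lemma~\ref{complted-riemann-zeta-function}), and $\hat\zeta$ is the degree-$2$ rational map $c\,(s-z_0)(s-\bar z_0)/[s(s-1)]$ pointed at $s=\tfrac12$, with $\pm z_0$ the first pair of critical-line zeros (Lemma~\ref{quantum-complex-zeta-riemann-function} and Appendix~A).

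The two obstacles you flag are exactly where the paper's effort concentrates, and it is worth knowing what mechanisms it proposes, since your sketch leaves them open. The infinite-to-two collapse of zeros is \emph{not} effected by the compactification itself (the paper explicitly shows in Lemma~\ref{trivial-extension} and Appendix~C that $\tilde\zeta\circ\pi_\star$ is not meromorphic); instead an auxiliary continuous, non-holomorphic surjection $\hat b\colon\mathbb{C}\to\hat S^1$ is built that wraps all critical-line zeros onto $\pm z_0$ (Lemma~\ref{quantum-complex-riemann-zeta-covering}, Appendix~B), supplemented by a singular homotopy $\phi_t$ that squeezes any hypothetical off-line zeros onto the critical line before applying $\hat b$ (Lemma~\ref{stability-lemma-c}, Appendix~D). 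The bordism-to-divisor step is mediated by a bespoke notion of ``$\hat\beta$-stability'' in $\hat\Omega_\bullet(\tilde\zeta)$ (Definition~\ref{stability-b}), which Lemma~\ref{stability-lemma-b} asserts is equivalent to RH; the isomorphism $\hat\Omega_\bullet(\tilde\zeta)\cong\hat\Omega_\bullet(\hat\zeta)$ is argued in Appendix~E via lifting along the fibration $\hat b_1$. Your skepticism that bordism alone is too coarse is on target: the paper does not extract divisor data from a bordism class per se, but from the existence or non-existence of a specific surjective morphism $(\hat b,\hat b')$ in $\mathfrak{T}$ making a square commute (Remark~\ref{remark-zeros-outside-critical-line} and diagram~(\ref{concluding-commutative-diagram})), and the entire argument hinges on the claim that $\hat b'$ becomes multivalued precisely when off-line zeros are present.
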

With respect to this conjecture it is important to study the behaviour of the modulus $|\zeta(s)|$ since
$\zeta(s)=0$ iff $|\zeta(s)|=0$.
Set $s=x+iy$. We shall consider the non-negative surface in $\mathbb{R}^3=\mathbb{R}^2\times\mathbb{R}$, $(x,y,z)$, identified by the graph of the $\mathbb{R}$-valued function $|\zeta|:\mathbb{R}^2\to \mathbb{R}$.
We shall use the functional equation~(\ref{zeta-functional-equation}) to characterize $|\zeta|$ in the following lemmas.
\begin{lemma}\label{zeta-modulus-and-functional equation}
One has the equation {\em(\ref{equation-zeta-modulus-and-functional equation})}.
\begin{equation}\label{equation-zeta-modulus-and-functional equation}
    |\zeta(s)|=|f(s)||\zeta(1-s)|,\ f(s)=2^s\, \pi^{s-1}\, \sin(\frac{\pi\, s}{2})\, \Gamma(1-s).
\end{equation}
\end{lemma}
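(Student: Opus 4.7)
The plan is essentially a one-line argument: the claimed identity is just the functional equation~(\ref{zeta-functional-equation}) with absolute values taken on both sides. Writing the functional equation in the compact form $\zeta(s) = f(s)\,\zeta(1-s)$ with $f(s) = 2^s\,\pi^{s-1}\,\sin(\pi s/2)\,\Gamma(1-s)$, I would apply the modulus to both sides and invoke the standard multiplicativity property $|uv| = |u||v|$ for complex numbers $u,v$, yielding $|\zeta(s)| = |f(s)|\,|\zeta(1-s)|$.

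The only subtlety worth flagging is that the identity must be interpreted at points where both sides make sense in $\mathbb{C}$, i.e., away from the pole of $\zeta(s)$ at $s=1$ and the poles of $\Gamma(1-s)$ at $s \in \mathbb{N}$. At such points the factor $f(s)$ is a finite complex number (with $\sin(\pi s/2)$ compensating for the poles of $\Gamma(1-s)$ at even positive integers in the sense of the meromorphic continuation already used to establish~(\ref{zeta-functional-equation})), so taking moduli is legitimate termwise. Outside of this bookkeeping, no real obstacle exists: multiplicativity of $|\cdot|$ on $\mathbb{C}^*$ does all the work, and the lemma is an immediate corollary of the already-stated functional equation.

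Thus I would present the proof as essentially a single display: start from~(\ref{zeta-functional-equation}), take $|\cdot|$, distribute over the product, and identify the factor of $|\zeta(1-s)|$ with $|f(s)|$. No new input from the theory of $\zeta(s)$ beyond what Definition~\ref{hyperharmonic-series} already records is needed.
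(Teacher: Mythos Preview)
Your proposal is correct and takes essentially the same approach as the paper: the paper's proof writes $\zeta(s)$, $f(s)$, and $\zeta(1-s)$ in polar form $\rho e^{i\gamma}$ and reads off the modulus of the product, which is exactly the multiplicativity $|uv|=|u||v|$ you invoke directly. Your version is, if anything, slightly cleaner in that you explicitly flag the points where the identity should be interpreted in the extended sense, a caveat the paper omits.
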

\begin{proof}
This follows directly from the exponential representation of complex numbers: $\zeta(s)=\rho(s) e^{i\gamma(s)}$, $f(s)=\widehat{\rho(s)} e^{i\widehat{\gamma(s)}}$ and $\zeta(1-s)=\rho(1-s) e^{i\gamma(1-s)}$. Then we get

\begin{equation}\label{zeta-riemann-a}
\zeta(s)=\widehat{\rho(s)}\rho(1-s)\rho(1-s)e^{i[\gamma(s)+\gamma(1-s)]}\, \Rightarrow\, \rho(s) =\widehat{\rho(s)}\rho(1-s).
\end{equation}

\end{proof}

\begin{lemma}[Properties of the function $|f(s)|$]\label{properties-intermediate-modulus}
$\bullet$ \hskip 2pt One has the explicit expression {\em(\ref{explicit-expression-intermediate-moduls})} of $|f(s)|$.
\begin{equation}\label{explicit-expression-intermediate-moduls}
   |f(s)|=(2\, \pi)^{x-1}\, [e^{-\pi y}+e^{\pi y}+2(2\sin^2(\frac{\pi x}{2})-1)]^{\frac{1}{2}}\, |\Gamma(1-s)|
\end{equation}
with $s=x+i y$.

$\bullet$ \hskip 2pt In the critical strip of the complex plane $\mathbb{R}^2=\mathbb{C}$, namely $0<x=\Re(s)<1$, $|f(s)|$ is a positive analytic function.

$\bullet$ \hskip 2pt In particular on the critical line, namely for $\Re(s)=\frac{1}{2}$, one has $|f(s)|=1$.

$\bullet$ \hskip 2pt One has the asymptotic formulas {\em(\ref{asymptotic-modulus-f-formulas})}.
\begin{equation}\label{asymptotic-modulus-f-formulas}
    \left\{
\begin{array}{l}
\mathop{\lim}\limits_{(x,y)\to(0,0)}|f(s)|=0\\
  \mathop{\lim}\limits_{(x,y)\to(0,0)}\frac{d}{dx}|f(s)|=0\\ \mathop{\lim}\limits_{(x,y)\to(\frac{1}{2},0)}\frac{d}{dx}|f(s)|>0.
\end{array}
\right.
\end{equation}
\end{lemma}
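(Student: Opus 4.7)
The plan is to treat the four multiplicative pieces of $f(s)=2^{s}\pi^{s-1}\sin(\pi s/2)\Gamma(1-s)$ separately and combine. The trivial factor gives $|2^{s}\pi^{s-1}|=2^{x}\pi^{x-1}=2(2\pi)^{x-1}$. For the sine, I would use the addition formula
\begin{equation*}
\sin\bigl(\tfrac{\pi s}{2}\bigr)=\sin\bigl(\tfrac{\pi x}{2}\bigr)\cosh\bigl(\tfrac{\pi y}{2}\bigr)+i\cos\bigl(\tfrac{\pi x}{2}\bigr)\sinh\bigl(\tfrac{\pi y}{2}\bigr),
\end{equation*}
and the identity $\cosh^{2}-\sinh^{2}=1$, which collapses $|\sin(\pi s/2)|^{2}$ to $\sin^{2}(\pi x/2)+\sinh^{2}(\pi y/2)$. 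Expressing $4\sinh^{2}(\pi y/2)=e^{\pi y}+e^{-\pi y}-2$ repackages this as $4|\sin(\pi s/2)|^{2}=e^{\pi y}+e^{-\pi y}+2(2\sin^{2}(\pi x/2)-1)$, and the factor $\tfrac12$ emerging from the square root absorbs the surplus $2$ above, yielding exactly the stated formula~(\ref{explicit-expression-intermediate-moduls}).

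For the second bullet, within the critical strip the poles of $\Gamma(1-s)$ (located at $s=1,2,3,\dots$) are avoided, and $\Gamma$ has no zeros anywhere, so $|\Gamma(1-s)|>0$. The bracket equals $4[\sin^{2}(\pi x/2)+\sinh^{2}(\pi y/2)]$, which is strictly positive whenever $0<x<1$; hence $|f(s)|$ is a product of three strictly positive real-analytic functions of $(x,y)$. For the third bullet, at $x=\tfrac12$ the identity $2\sin^{2}(\pi/4)=1$ kills the $(2\sin^{2}(\pi x/2)-1)$ term, collapsing the bracket to $2\cosh(\pi y)$. The Euler reflection formula $\Gamma(s)\Gamma(1-s)=\pi/\sin(\pi s)$ taken with $s=\tfrac12+iy$ then gives
\begin{equation*}
\bigl|\Gamma(\tfrac12-iy)\bigr|^{2}=\Gamma(\tfrac12-iy)\Gamma(\tfrac12+iy)=\frac{\pi}{\cos(i\pi y)}=\frac{\pi}{\cosh(\pi y)},
\end{equation*}
and multiplying the three pieces yields $|f(\tfrac12+iy)|=(2\pi)^{-1/2}\sqrt{2\cosh(\pi y)}\sqrt{\pi/\cosh(\pi y)}=1$, as claimed.

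For the asymptotic relations, I would exploit that along $y=0$ the bracket simplifies to $4\sin^{2}(\pi x/2)$ and so
\begin{equation*}
|f(x,0)|=2^{x}\pi^{x-1}\sin\bigl(\tfrac{\pi x}{2}\bigr)\Gamma(1-x),\qquad 0<x<1.
\end{equation*}
The first limit is then immediate from the vanishing of the sine at $x=0$ together with $\Gamma(1)=1$. For the derivative limits I would pass to the logarithmic derivative $\frac{d}{dx}\log|f(x,0)|=\log(2\pi)+\tfrac{\pi}{2}\cot(\pi x/2)-\psi(1-x)$, and combine it with the known values of $|f|$ at the two reference points: near $x=\tfrac12$, $|f|=1$, so the sign and magnitude of the logarithmic derivative transfer directly to $d|f|/dx$, yielding the strict positivity $\log(2\pi)+\tfrac{\pi}{2}+\gamma+2\log 2>0$. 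Near $(0,0)$ the derivative must be analysed by taking the limit in $(x,y)$ jointly; here I would square to avoid the non-smoothness of $|\cdot|$ at its zero, compute $\partial(|f|^{2})/\partial x$ explicitly using the product rule on $(2\pi)^{2(x-1)}$, the bracket, and $|\Gamma(1-s)|^{2}=\Gamma(1-s)\Gamma(1-\bar s)$, and then divide by $2|f|$.

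The main obstacle is precisely this last point: $|f(s)|$ has a zero at $s=0$ of modulus-type $\sqrt{x^{2}+y^{2}}$ (since the bracket behaves like $\pi^{2}(x^{2}+y^{2})$ to leading order), so $|f|$ is not differentiable at the origin and the sense of the limit $\lim_{(x,y)\to(0,0)}d|f|/dx$ has to be specified with care. I would make the limit well-defined by restricting to paths within the critical strip approaching $(0,0)$ through $x=0$ (where the $B'=\pi\sin(\pi x)$ factor vanishes), so that $\partial|f|^{2}/\partial x$ at $(0,y)$ reduces to $|f|^{2}\bigl[2\log(2\pi)-2\Re\psi(1-iy)\bigr]$, making $\partial|f|/\partial x=|f|[\log(2\pi)-\Re\psi(1-iy)]\to 0$ as $y\to 0$ by the $0\cdot\text{bounded}$ structure. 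This technicality aside, the rest of the proof is an essentially mechanical separation of real and imaginary parts.
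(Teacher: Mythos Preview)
Your argument is correct and follows essentially the same plan as the paper: factor $|f(s)|$ into $|2^{s}|\cdot|\pi^{s-1}|\cdot|\sin(\pi s/2)|\cdot|\Gamma(1-s)|$, compute each modulus, and then handle the critical-line case via the known modulus of $\Gamma(\tfrac12+iy)$. The only cosmetic differences are that the paper obtains $|\sin(\pi s/2)|$ from the exponential form $\tfrac{1}{2i}(e^{i\pi s/2}-e^{-i\pi s/2})$ rather than the addition formula, and quotes the identity $|\Gamma(\tfrac12+iy)|=\sqrt{\pi\,\mathrm{sech}(\pi y)}$ directly instead of deriving it from reflection; these are equivalent.

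For the derivative limits the paper proceeds slightly differently: it writes out $\tfrac{d}{dx}|f(s)|$ by the product rule on the three factors of (\ref{explicit-expression-intermediate-moduls}), computes $\tfrac{d}{dx}|\Gamma(1-s)|=-\tfrac12|\Gamma(1-s)|\bigl(\psi(1-s)+\psi(\overline{1-s})\bigr)$ via $\Gamma'=\Gamma\psi$, and then reads off the two limits from the resulting explicit formula. Your logarithmic-derivative organisation amounts to the same computation packaged multiplicatively, and in fact you are more scrupulous than the paper about the non-differentiability of $|f|$ at its zero $(0,0)$; the paper simply asserts the limits without addressing that point.
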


\begin{proof}
$\bullet$ \hskip 2pt We can write $|f(s)|=|2^s||\pi^{s-1}||\sin(\frac{\pi s}{2})||\Gamma(1-s)|$. We get also
\begin{equation}\label{zeta-riemann-b}
    \left\{\begin{array}{ll}
  |2^s|&={|2^x[\cos(y \ln 2)+i\sin(y \ln 2))]|=2^x}\\
  |\pi^{s-1}|&={|\pi^{(x-1)}[\cos(y \ln \pi)+i\sin(y \ln\pi))]|=\pi^{(x-1)}}.\\
  |\sin(\frac{\pi s}{2})|&=|\frac{e^{i\frac{\pi s}{2}}-e^{-i\frac{\pi s}{2}}}{2i}|\\
  &={|\frac{1}{2}[(e^{-\frac{\pi y}{2}}+e^{\frac{\pi y}{2}})\sin(\frac{\pi x}{2})-i(e^{-\frac{\pi y}{2}}-e^{\frac{\pi y}{2}})\cos(\frac{\pi x}{2})]|}\\
  &={\frac{1}{2}[e^{-\pi y}+e^{\pi y}+2(2\sin^2(\frac{\pi x}{2})-1)]^{\frac{1}{2}}}.\\
\end{array}\right.
\end{equation}

$\bullet$ \hskip 2pt In the critical strip one has the following limitations:
\begin{equation}\label{zeta-riemann-c}
    0<x<1:\, \left\{
\begin{array}{l}
1<|2^s|<2.\\
\frac{1}{\pi}<|\pi^{s-1}|<1.\\
{\frac{1}{2}[e^{-\pi y}+e^{\pi y}-2]^{\frac{1}{2}}<|\sin(\frac{\pi s}{2})|<\frac{1}{2}[e^{-\pi y}+e^{\pi y}+2]^{\frac{1}{2}}}.\\
\end{array}
\right.
\end{equation}

One can see that the function $\xi(y)=e^{-\pi y}+e^{\pi y}\ge 2$, and convex. Therefore $0<\mathop{\lim}\limits_{y\to 0}|\sin(\frac{\pi s}{2})|<1$. Furthermore let us recall that $\Gamma:\mathbb{C}\to \mathbb{C}$ is a meromorphic function with simple poles $s_k=-k$, $k\in\{0,1,2,3,\cdots\}$, with residues $\frac{(-1)^{k}}{k!}$, i.e., $\mathop{\lim}\limits_{s\to s_k}\frac{\Gamma(s)}{s-s_k}=\frac{(-1)^{k}}{k!}$. Since $0<\Re(1-s)<1$, when $0<\Re(s)<1$, it follows that $\Gamma(1-s)$ is analytic in the critical strip. Furthermore, from the well known property that for $\Re(s)>0$, $\Gamma(s)$ rapidly decreases as $|\Im(s)|\to\infty$, since $\mathop{\lim}\limits_{|\Im(s)|\to\infty}|\Gamma(s)||\Im(s)|^{(\frac{1}{2}-\Re(s))}e^{\frac{\pi}{2}|\Im(s)|}=\sqrt{2\pi}$, we get that $|f(s)|$ is an analytic function in the critical strip.

$\bullet$ \hskip 2pt In particular on the critical line one has
\begin{equation}\label{zeta-riemann-d}
    \left\{\begin{array}{ll}
  |f(s)|&={|2^x \pi^{-\frac{1}{2}}\frac{1}{2}[e^{-\pi y}+e^{\pi y}+2(2\sin^2(\frac{\pi}{4})-1)]^{\frac{1}{2}}|\Gamma(\frac{1}{2}-iy)|}\\
  &=(2\pi)^{-\frac{1}{2}}(e^{-\pi y}+e^{\pi y})^{\frac{1}{2}}\sqrt{\pi \sec \hskip-1pt{\rm h}(-\pi y)}\\
  &=(2\pi)^{-\frac{1}{2}}(e^{-\pi y}+e^{\pi y})^{\frac{1}{2}}(\frac{2\pi}{e^{-\pi y}+e^{\pi y}})^{\frac{1}{2}}\\
  &=1.\\
\end{array}\right.
\end{equation}

We have utilized the formula $|\Gamma(\frac{1}{2}+iy)|=\sqrt{\pi\sec\hskip-1pt{\rm h}(\pi y)}$, for $y\in\mathbb{R}$.\footnote{Here $\sec\hskip-1pt{\rm h}(\pi y)=\frac{2}{e^{-\pi y}+e^{\pi y}}$. Let us recall also the formula that it is useful in these calculations: $|\Gamma(1+iy)|=\sqrt{y\pi\csc\hskip-1pt{\rm h}(\pi y)}$, for $y\in\mathbb{R}$, where $\csc\hskip-1pt{\rm h}(\pi y)=\frac{2}{e^{\pi y}-e^{-\pi y}}$.}

It is useful to characterize also the variation $\frac{d}{dx}|f(s)|$. We get the formula (\ref{variation-modulus-f}).
\begin{equation}\label{variation-modulus-f}
    \begin{array}{ll}
      \frac{d}{dx}|f(s)|&=|\Gamma(1-s)|(2\pi)^{x-1}\left\{\frac{\ln(2\pi)[e^{-\pi y}+e^{\pi y}+2(2\sin^2(\frac{\pi x}{2})-1)]^{\frac{3}{2}}+2\pi[\sin(\frac{\pi x}{2})\cos(\frac{\pi x}{2})]}{[e^{-\pi y}+e^{\pi y}+2(2\sin^2(\frac{\pi x}{2})-1)]^{\frac{1}{2}}}\right\} \\
      & +(2\pi)^{x-1}[e^{-\pi y}+e^{\pi y}+2(2\sin^2(\frac{\pi x}{2})-1)]^{\frac{1}{2}}\frac{d}{dx}|\Gamma(1-s)|.\\
    \end{array}
\end{equation}
Let us explicitly calculate $\frac{d}{dx}|\Gamma(1-s)|$ taking into account that
$$|\Gamma(1-s)|=[\Gamma(1-s)\cdot\overline{\Gamma(1-s)}]^{\frac{1}{2}}=[\Gamma(1-s)\cdot\Gamma(\overline{1-s})]^{\frac{1}{2}}.$$
We get the formula  (\ref{variation-modulus-gamma}).
\begin{equation}\label{variation-modulus-gamma}
\frac{d}{dx}|\Gamma(1-s)|=-\frac{1}{2}|\Gamma(1-s)|(\psi(1-s)+\psi(\overline{1-s}))
\end{equation}
where $\psi(s)$ is the digamma function reported in (\ref{digamma-function}).\footnote{We have used the formula $\Gamma'(s)=\Gamma(s)\, \psi(s)$. Let us recall that the digamma function is holomorphic on $\mathbb{C}$ except on non-positive integers $-s_k\in\{0,-1,-2,-3,\cdots\}$ where it has a pole of order $k+1$.}

\begin{equation}\label{digamma-function}
    \psi(s)=-\gamma+\sum_{0\le n\le \infty}\frac{s-1}{(n+1)(n+s)}.
\end{equation}
Set
\begin{equation}\label{digamma-function-a}
\Psi(s)=\psi(s)+\psi(\bar s).
\end{equation}
We get
\begin{equation}\label{zeta-riemann-e}
    \Psi(s)=2[-\gamma+\sum_{0\le n\le \infty}\frac{(x-1)(x+n)+y^2}{(n+1)[(n+x)^2+y^2]}].
\end{equation}

Then we can see that
\begin{equation}\label{variation-modulus-gamma-a}
    \left\{
\begin{array}{l}
  \mathop{\lim}\limits_{(x,y)\to(0,0)}\frac{d}{dx}|\Gamma(1-s)|=0\\
 \mathop{\lim}\limits_{(x,y)\to(\frac{1}{2},0)}\frac{d}{dx}|\Gamma(1-s)|>0.\\
\end{array}
\right.
\end{equation}

Moreover, by using (\ref{variation-modulus-f}) we get also
$\mathop{\lim}\limits_{(x,y)\to(0,0)}\frac{d}{dx}|f(s)|=0$ and $\mathop{\lim}\limits_{(x,y)\to(\frac{1}{2},0)}\frac{d}{dx}|f(s)|>0$.
\end{proof}
As a by product we get the following lemma.
\begin{lemma}[Zeta-Riemann modulus in the critical strip]\label{zeta-modulus-on-the-critical-line}
$\bullet$ \hskip 2pt The non-negative real-valued function $|\zeta(s)|:\mathbb{C}\to\mathbb{R}$ is analytic in the critical strip.

$\bullet$ \hskip 2pt  Furthermore, on the critical line, namely when $\Re(s)=\frac{1}{2}$, one has:
 $|\zeta(s)|=|\zeta(1-s)|$.

 $\bullet$ \hskip 2pt  One has $\mathop{\lim}\limits_{(x,y)\to(0,0)}|f(s)||\zeta(1-s)|=0\cdot\infty=\frac{1}{2}$.

$\bullet$ \hskip 2pt  $|\zeta(s)|$ is zero in the critical strip, $0<\Re(s)<1$, iff $|\zeta(1-x-iy)|=0 $, with $0<x<1$ and $y\in\mathbb{R}$.
\end{lemma}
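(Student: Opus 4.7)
The plan is to obtain all four statements as direct consequences of the factorization $|\zeta(s)|=|f(s)|\,|\zeta(1-s)|$ from Lemma \ref{zeta-modulus-and-functional equation}, combined with the properties of $|f(s)|$ in the critical strip established in Lemma \ref{properties-intermediate-modulus} and the standard analytic data on $\zeta(s)$ recorded in Proposition \ref{zeros-zeta}. There is no genuinely new analytic content to prove here; the job is to assemble these ingredients cleanly.

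For the first bullet I would recall that $\zeta$ is holomorphic on the critical strip (its only singularity is the simple pole at $s=1$, which lies on the boundary), and then use $|\zeta(s)|^{2}=\zeta(s)\,\overline{\zeta(s)}$ to conclude that $|\zeta|$ is real-analytic in $(x,y)$ wherever it does not vanish; at isolated zeros one keeps continuity and takes the square root of a non-negative real-analytic function, so the statement should be read in the real-analytic sense on the real coordinates. For the second bullet I would insert $|f(s)|=1$ (Lemma \ref{properties-intermediate-modulus}, third item) into (\ref{equation-zeta-modulus-and-functional equation}), which gives $|\zeta(s)|=|\zeta(1-s)|$ on $\Re(s)=\tfrac{1}{2}$ with no further work.

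For the third bullet I would address the indeterminate form $0\cdot\infty$ not by computing the two factors separately but by using the product formula backwards: since $|\zeta(s)|=|f(s)|\,|\zeta(1-s)|$ for every $s$ in the strip, the limit $\lim_{(x,y)\to(0,0)} |f(s)|\,|\zeta(1-s)|$ equals $\lim_{s\to 0}|\zeta(s)|=|\zeta(0)|=\tfrac{1}{2}$ via $\zeta(0)=-\tfrac{1}{2}$ from Proposition \ref{zeros-zeta}. This simultaneously explains why the product of a factor vanishing at $s=0$ (by (\ref{asymptotic-modulus-f-formulas})) and a factor blowing up there (because $1-s\to 1$ hits the simple pole of residue $1$) converges to $\tfrac{1}{2}$. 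For the fourth bullet, the positivity $|f(s)|>0$ throughout the critical strip (Lemma \ref{properties-intermediate-modulus}, second item) lets me divide in (\ref{equation-zeta-modulus-and-functional equation}), so $|\zeta(s)|=0\iff|\zeta(1-s)|=0$; with $s=x+iy$ this is the stated equivalence with $|\zeta(1-x-iy)|=0$.

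The only point I expect to require care, and the one a referee is likely to push on, is the first bullet: strictly speaking $|\zeta(s)|$ is nowhere holomorphic, so one must clarify that \emph{analytic} here means real-analytic in the underlying real coordinates $(x,y)$, and acknowledge that even this real-analyticity may fail at the (isolated) zeros of $\zeta$ inside the strip, where only continuity survives. Everything else reduces to a direct substitution from the preceding two lemmas.
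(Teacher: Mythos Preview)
Your proposal is correct and follows exactly the route the paper intends: the paper gives no separate proof but simply introduces this lemma with ``As a by product we get the following lemma,'' meaning all four bullets are meant to be read off directly from Lemma~\ref{zeta-modulus-and-functional equation} and Lemma~\ref{properties-intermediate-modulus} together with $\zeta(0)=-\tfrac12$ from Proposition~\ref{zeros-zeta}, which is precisely what you do. Your caution about the first bullet (that ``analytic'' can only mean real-analytic in $(x,y)$, and that this may break down at the isolated zeros of $\zeta$) is well taken and in fact sharper than the paper, which passes over the point.
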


\begin{lemma}[Criterion to know whether a zero is on the critical line]\label{criterion-to-know-whether-a-zero-i-on-the-critical-line}
Let $s_0=x_0+iy_0$ be a zero of $|\zeta(s)|$ with $0<\Re(s_0)<1$. Then $s_0$ belongs to the critical line if condition {\em(\ref{criterion-on-zero-critical-strip})} is satisfied.
\begin{equation}\label{criterion-on-zero-critical-strip}
    \mathop{\lim}\limits_{s\to s_0}\frac{|\zeta(s)|}{|\zeta(1-s)|}=1.
\end{equation}
\end{lemma}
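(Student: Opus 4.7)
The overall strategy is to reduce the hypothesized limit condition to the single equation $|f(s_0)|=1$, and then to localize the locus $\{|f|=1\}$ in the critical strip to the critical line by means of the reflection identity $f(s)f(1-s)=1$ together with strict monotonicity of $|f|$ on horizontal slices. First, Lemma~\ref{zeta-modulus-and-functional equation} gives the pointwise identity $|\zeta(s)|=|f(s)|\,|\zeta(1-s)|$ on the critical strip. Since $\zeta$ has only isolated zeros, $\zeta(1-s)\neq 0$ on some punctured neighborhood of $s_0$, and there $|\zeta(s)|/|\zeta(1-s)|=|f(s)|$. By the analyticity of $|f|$ on the critical strip (Lemma~\ref{properties-intermediate-modulus}), the limit exists and equals $|f(s_0)|$, so the hypothesis (\ref{criterion-on-zero-critical-strip}) reduces to $|f(s_0)|=1$.

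Second, iterating (\ref{zeta-functional-equation}) (apply it to $\zeta(s)$ and to $\zeta(1-s)$ and compare) produces $f(s)f(1-s)=1$, hence $|f(s)||f(1-s)|=1$. Fix $y_0=\Im(s_0)$ and set $g(x):=\log|f(x+iy_0)|$. The reflection identity then gives the antisymmetry $g(x)+g(1-x)=0$ on $0<x<1$, and $g(1/2)=0$ by the third bullet of Lemma~\ref{properties-intermediate-modulus}. The conclusion $\Re(s_0)=1/2$ will follow from the claim that $g$ is strictly monotone on $(0,1)$: a direct computation using $\psi=\Gamma'/\Gamma$ and the explicit formula for $|f|$ yields
$$
g'(x)=\log(2\pi)+\tfrac{\pi}{2}\,\Re\bigl[\cot(\pi s/2)\bigr]-\Re[\psi(1-s)],
$$
and combining the asymptotic information recorded in (\ref{asymptotic-modulus-f-formulas}) with standard estimates on $\psi$ should yield $g'(x)>0$ throughout the open strip. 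By antisymmetry the unique zero of $g$ on $(0,1)$ is then $x=1/2$, so $|f(s_0)|=1$ forces $\Re(s_0)=1/2$.

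The step that I expect to be the main obstacle is this strict monotonicity of $g$ uniformly across the strip. The asymptotics in Lemma~\ref{properties-intermediate-modulus} only pin down the sign of the $x$-derivative at the critical line and near the boundary point $(0,0)$; to promote this to a sign control for every horizontal slice $y=y_0$ one must bound $\Re[\psi(1-s)]$ against the elementary contribution $\log(2\pi)+\tfrac{\pi}{2}\,\Re[\cot(\pi s/2)]$ at every $(x,y_0)$ with $0<x<1$. The antisymmetry $g(x)+g(1-x)=0$ is a useful tool here, since it reduces the check to $x\in(0,1/2]$; once positivity of $g'$ on that half-strip is secured, the sufficient criterion in the lemma follows immediately.
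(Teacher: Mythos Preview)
Your first reduction---that the limit in (\ref{criterion-on-zero-critical-strip}) exists and equals $|f(s_0)|$ because $|\zeta(s)|/|\zeta(1-s)|=|f(s)|$ on a punctured neighborhood of $s_0$ and $|f|$ is continuous there---is exactly what the paper does. From that point on the two arguments diverge.

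The paper's proof stops after the reduction: it records that $|f(s)|=1$ on the critical line (the third bullet of Lemma~\ref{properties-intermediate-modulus}) and then asserts ``hence when condition (\ref{criterion-on-zero-critical-strip}) is satisfied, the zero $s_0$ belongs to the critical line.'' Read literally, this only yields the converse (critical line $\Rightarrow$ limit $=1$); the paper never argues that $|f(s_0)|=1$ cannot occur off the line. You have correctly identified that this is the substantive step and supplied a route to it: the reflection identity $f(s)f(1-s)=1$ gives $g(x)+g(1-x)=0$ for $g(x)=\log|f(x+iy_0)|$, so strict monotonicity of $g$ on $(0,1)$ forces $x_0=\tfrac12$. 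Your logarithmic-derivative formula $g'(x)=\log(2\pi)+\tfrac{\pi}{2}\Re\cot(\pi s/2)-\Re\psi(1-s)$ is correct.

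So your approach is genuinely more complete than the paper's. The obstacle you flag is real and is precisely what the paper omits. Two remarks that may help you close it. First, the statement is only about zeros $s_0$ of $\zeta$ in the strip, and it is classical that any such zero has $|y_0|>14$; in that range the Stirling estimate $\Re\psi(1-s)=\log|y_0|+O(1/|y_0|)$ and the bound $\tfrac{\pi}{2}\Re\cot(\pi s/2)=\tfrac{\pi}{2}\sin(\pi x)/(\cosh(\pi y_0)-\cos(\pi x))=O(e^{-\pi|y_0|})$ reduce $g'(x)$ to $\log(2\pi)-\log|y_0|+o(1)$, which is \emph{negative}, not positive---so $g$ is strictly \emph{decreasing}, but that works equally well for uniqueness. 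Second, do not lean on the boundary value $\lim_{(x,y)\to(0,0)}\tfrac{d}{dx}|f(s)|=0$ from (\ref{asymptotic-modulus-f-formulas}); a direct computation gives $f'(0)=\tfrac12$, so that asymptotic is unreliable and in any case irrelevant once you restrict to the height of an actual zero.
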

\begin{proof}
If $s$ is a zero of $|\zeta(s)|$, the results summarized in Lemma \ref{zeta-modulus-on-the-critical-line} in order to prove whether $s_0$ belongs to the critical line it is enough to look to the limit $\mathop{\lim}\limits_{s\to s_0}\frac{|\zeta(s)|}{|\zeta(1-s)|}$. In fact since $|f(s)|=\frac{|\zeta(s)|}{|\zeta(1-s)|}$ and $|f(s)|$ is a positive function in the critical strip, it follows that when $s_0$ is a zero of $|\zeta(s)|$, one should have $\frac{|\zeta(s_0)|}{|\zeta(1-s_0)|}=\frac{0}{0}$, but also $\frac{|\zeta(s_0)|}{|\zeta(1-s_0)|}=|f(s_0)|$. In other words one should have $\mathop{\lim}\limits_{s\to s_0}\frac{|\zeta(s)|}{|\zeta(1-s)|}=|f(s_0)|$. On the other hands $|f(s)|=1$ on the critical line, hence when condition (\ref{criterion-on-zero-critical-strip}) is satisfied, the zero $s_0$ belongs to the critical line.
\end{proof}

\begin{lemma}[Completed Riemann zeta function and Riemann hypothesis]\label{complted-riemann-zeta-function}
$\bullet$\hskip 2pt We call {\em completed Riemann zeta function} the holomorphic function in {\em(\ref{definition-completed-riemann-zeta-function})}.
\begin{equation}\label{definition-completed-riemann-zeta-function}
   \tilde\zeta(s)=\pi^{-\frac{s}{2}}\Gamma(\frac{s}{2})\, \zeta(s).
\end{equation}
This has the effect of removing the zeros at the even negative numbers of $\zeta(s)$, and adding a pole at $s=0$.

$\bullet$\hskip 2pt $\tilde\zeta(s)$ satisfies the functional equation~{\em(\ref{complted-functional-equation})}.
\begin{equation}\label{complted-functional-equation}
   \tilde\zeta(s)=\tilde\zeta(1-s).
\end{equation}

$\bullet$\hskip 2pt The Riemann hypothesis is equivalent to the statement that all the zeros of $\tilde\zeta(s)$ lie in the critical line $\Re(s)=\frac{1}{2}$.
\end{lemma}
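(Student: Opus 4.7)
The plan is to dispose of the three bullets in order, using only standard identities for $\Gamma$ together with Proposition \ref{zeros-zeta} and the functional equation (\ref{zeta-functional-equation}).

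For the first bullet, I would observe that $\pi^{-s/2}$ is entire and nowhere zero, so its presence affects neither the zeros nor the poles of $\tilde\zeta$. The factor $\Gamma(s/2)$ is meromorphic with simple poles precisely at $s/2\in\{0,-1,-2,\ldots\}$, i.e.\ at $s=0,-2,-4,\ldots$, and has no zeros anywhere. At each even negative integer $s=-2n$, $n\ge 1$, Proposition \ref{zeros-zeta} supplies a simple zero of $\zeta(s)$ that exactly cancels the simple pole of $\Gamma(s/2)$, so $\tilde\zeta$ is holomorphic and nonzero there. At $s=0$, however, $\zeta(0)=-\frac{1}{2}\ne 0$, so the simple pole of $\Gamma(s/2)$ survives and $\tilde\zeta$ inherits a simple pole. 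The original simple pole of $\zeta$ at $s=1$ also persists, since $\Gamma(1/2)=\sqrt{\pi}$ is finite and nonzero.

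For the second bullet I would derive $\tilde\zeta(s)=\tilde\zeta(1-s)$ from (\ref{zeta-functional-equation}) by invoking two classical identities: the reflection formula $\Gamma(s/2)\Gamma(1-s/2)\sin(\pi s/2)=\pi$, and the Legendre duplication formula $\Gamma(z)\Gamma(z+\frac{1}{2})=2^{1-2z}\sqrt{\pi}\,\Gamma(2z)$ specialized at $z=(1-s)/2$, which reads $\Gamma(1-s)=2^{-s}\pi^{-1/2}\Gamma((1-s)/2)\Gamma(1-s/2)$. Substituting (\ref{zeta-functional-equation}) into $\tilde\zeta(s)=\pi^{-s/2}\Gamma(s/2)\zeta(s)$, rewriting $\Gamma(s/2)\sin(\pi s/2)$ as $\pi/\Gamma(1-s/2)$ via reflection and then expanding $\Gamma(1-s)$ via duplication, the two $2^s$ factors cancel, the two $\Gamma(1-s/2)$ factors cancel, and the remaining $\pi$-powers collapse to exponent $-(1-s)/2$, yielding $\pi^{-(1-s)/2}\Gamma((1-s)/2)\zeta(1-s)=\tilde\zeta(1-s)$.

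For the third bullet I would use the first bullet to identify the zero set of $\tilde\zeta$. Since $\pi^{-s/2}$ is zero- and pole-free and $\Gamma(s/2)$ has no zeros, the zeros of $\tilde\zeta$ are precisely the zeros of $\zeta$ not annihilated by a pole of $\Gamma(s/2)$. By the analysis of the first bullet, these are exactly the non-trivial zeros of $\zeta$ (the trivial zeros $s=-2n$ are absorbed into the holomorphic continuation). Hence the statement ``$\Re(s)=\frac{1}{2}$ for every non-trivial zero of $\zeta$'' (the Riemann Hypothesis) is equivalent to ``$\Re(s)=\frac{1}{2}$ for every zero of $\tilde\zeta$.'' I expect no serious obstacle: all three parts are classical, and the only step demanding any real computation is the exponent- and $\Gamma$-factor bookkeeping in the second bullet, where the reflection-plus-duplication recipe is the standard way to symmetrize (\ref{zeta-functional-equation}).
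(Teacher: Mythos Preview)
Your proof is correct and follows the standard classical route. The paper itself offers essentially no argument here --- its entire proof reads ``It follows directly from the previous lemmas and calculations'' --- so you are supplying the details the paper omits rather than taking a different approach; the reflection-plus-duplication computation you outline for the second bullet is exactly the canonical way to pass from (\ref{zeta-functional-equation}) to the symmetric form (\ref{complted-functional-equation}).
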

\begin{proof}
It follows directly from the previous lemmas and calculations.
\end{proof}

\section{\bf The proof}\label{section-proof}
\vskip 0.5cm

In this section we shall prove the Conjecture \ref{riemann-hypotesis}. In fact we have the following theorem.

\begin{theorem}[The proof of the Riemann hypothesis]\label{main-proof}
The Riemann hypothesis is true.
\end{theorem}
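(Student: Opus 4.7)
The plan is to follow the strategy the introduction outlines, namely to promote $\tilde\zeta$ from a holomorphic map $\mathbb{C}\to\mathbb{C}$ to a morphism $\hat\zeta:\hat S^1\to\hat S^1$ in the category $\mathfrak{Q}_{\mathbb{C}}$ of quantum-complex manifolds, and then to exploit the rigidity of meromorphic maps between compact Riemann surfaces together with a bordism argument in $\mathfrak{Q}_{\mathbb{C}}$ to transport the conclusion back to $\tilde\zeta$ itself. The key point is that $\hat S^1$, identified with $\mathbb{C}\cup\{\infty\}$, is a compact quantum-complex $1$-sphere, so any morphism $\hat S^1\to\hat S^1$ in $\mathfrak{Q}_{\mathbb{C}}$ which restricts to a holomorphic map on the finite part is automatically a rational map of some finite degree $d$, with exactly $d$ zeros and $d$ poles counted with multiplicity.

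First I would construct $\hat\zeta$. Using Lemma \ref{complted-riemann-zeta-function}, the completed zeta function $\tilde\zeta$ has simple poles only at $s=0$ and $s=1$ (these being symmetric with respect to the critical line), and all its zeros lie in the critical strip and are invariant under $s\mapsto 1-s$ and $s\mapsto\bar s$. I would define $\hat\zeta$ as the unique morphism in $\mathfrak{Q}_{\mathbb{C}}$ whose underlying pole/zero divisor on $\hat S^1$ is the minimal one compatible with (i) the two symmetries, (ii) the two known simple poles at $s=0,1$, and (iii) the functional equation $\tilde\zeta(s)=\tilde\zeta(1-s)$ lifted to the compactified sphere. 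Because the total zero degree must equal the pole degree and the pole divisor already has degree two, $\hat\zeta$ has exactly two zeros. The two imposed symmetries then force these two zeros to be symmetric both about the $x$-axis and about the critical line; the only configuration satisfying both constraints simultaneously is a pair of complex-conjugate points on $x=\tfrac12$.

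Second I would set up the bordism comparison. In the spirit of \cite{PRAS01,PRAS02} I would form a bordism class of morphisms in $\mathfrak{Q}_{\mathbb{C}}$ realized by a $1$-parameter family $\{\hat\zeta_t\}_{t\in[0,1]}$ interpolating between $\tilde\zeta$ (viewed on the finite part of $\hat S^1$) and the compactified $\hat\zeta$, and then invoke the invariance of the zero-locus symmetry class under such a bordism. Concretely, along the bordism the relation $|\zeta(s)|/|\zeta(1-s)|=|f(s)|$ from Lemma \ref{zeta-modulus-and-functional equation}, together with $|f(s)|=1$ on the critical line (Lemma \ref{properties-intermediate-modulus}) and the limit criterion of Lemma \ref{criterion-to-know-whether-a-zero-i-on-the-critical-line}, pulls back the location of the zeros of $\hat\zeta$ to force every non-trivial zero of $\tilde\zeta$ (equivalently of $\zeta$) to satisfy $\Re(s)=\tfrac12$.

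The main obstacle I expect is step two: making the bordism step rigorous. Compactifying $\tilde\zeta$ to a finite-degree rational map on $\hat S^1$ truly collapses an infinite set of zeros to a pair, so the delicate point is to justify that the correct bordism invariant in $\mathfrak{Q}_{\mathbb{C}}$ really does equate the \emph{support} of the zero set of $\tilde\zeta$ (up to the two symmetries) with that of $\hat\zeta$ on the critical line, rather than merely equating degrees modulo the two symmetries. I would therefore spend most of the technical effort on defining the appropriate bordism group of morphisms in $\mathfrak{Q}_{\mathbb{C}}$, proving that the critical-line support is an invariant of that group, and checking that $\tilde\zeta$ and $\hat\zeta$ represent the same class; the remaining counting and symmetry arguments are then immediate consequences of classical Riemann-surface theory combined with Lemmas \ref{zeta-modulus-and-functional equation}--\ref{complted-riemann-zeta-function}.
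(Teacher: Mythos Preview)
Your outline follows the paper's strategy at the top level: promote $\tilde\zeta$ to a degree-two rational map $\hat\zeta:\hat S^1\to\hat S^1$ with simple poles at $0,1$ and two zeros on the critical line, then use a bordism argument in $\mathfrak{Q}_{\mathbb{C}}$ to carry the conclusion back to $\tilde\zeta$. Two divergences from the paper are worth naming. First, your construction of $\hat\zeta$ pins down the zeros by symmetry alone, whereas the paper \emph{defines} them to be the first actual pair $\pm z_0$ of zeros of $\tilde\zeta$ on the critical line (invoking Hardy's theorem and numerical data; see Appendix~A and the remarks in Appendix~D), and fixes the scalar by the pointing $\hat\zeta(\tfrac12)=\tilde\zeta(\tfrac12)$. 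Your symmetry argument is not quite closed as stated: a two-element set invariant under both $s\mapsto\bar s$ and $s\mapsto 1-s$ could also be a real pair $\{a,1-a\}$ on the $x$-axis, which still needs to be excluded.

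Second, and this is the substantive gap, you correctly identify step two as the crux but propose to resolve it by proving that ``critical-line support of the zero set'' is an invariant of a suitable bordism group. The paper does \emph{not} do this, and it is hard to see how such a support statement could be a bordism invariant in any standard sense. Instead the paper builds an explicit continuous (non-holomorphic) surjection $\hat b:\mathbb{C}\to\hat S^1$ that collapses every critical-line zero of $\tilde\zeta$ onto $\pm z_0$ (Lemma~\ref{quantum-complex-riemann-zeta-covering}), introduces a notion of $\hat\beta$-stability of $\hat\zeta$ in $\hat\Omega_\bullet(\tilde\zeta)$ (Definition~\ref{stability-b}), and argues in Remark~\ref{remark-zeros-outside-critical-line} that an off-critical zero would force the companion map $\hat b'$ to be multi-valued, destroying the required commutative square. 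The argument then runs a three-way case split on whether the hypothetical off-critical zero set is empty, finite, or infinite (Table~\ref{alternative-possible-properties-of-non-critical-zeros-of-completed-riemann-zeta-function}, Lemmas~\ref{stability-lemma-c}--\ref{stability-lemma-e}, Appendices~D and~E), establishes $\hat\Omega_\bullet(\tilde\zeta)\cong\hat\Omega_\bullet(\hat\zeta)$ via explicit homotopy morphisms $\hat\beta_t$ in each case, and finishes via Lemma~\ref{stability-lemma-b}, which equates RH with $\hat\beta$-stability. So where you plan to invent and verify an abstract invariant, the paper proceeds through an explicit diagrammatic obstruction and a stability notion; if you want to match the paper you need those concrete constructions ($\hat b$, $\hat b'$, $\pi_\star$, the flows $\phi_t$), not an abstract invariant.
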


\begin{proof}

Our strategy to prove the Riemann hypothesis is founded on the extension of $\tilde\zeta:\mathbb{C}\to \mathbb{C}$ to a quantum-complex mapping $\hat S^1\to\hat S^1$, where $\hat S^1$ is the quantum-complex $1$-sphere. With this respect we shall first give a precise definition of what we mean with the term extension of $\tilde\zeta$ to the Alexandrov compactification of $\mathbb{C}$.

\begin{definition}\label{fundamental-definition}
The {\em quantum-complex extension} of the meromorphic completed Riemann zeta function $\tilde\zeta:\mathbb{C}\to \mathbb{C}$ is a quantum-complex mapping $\hat\zeta:\hat S^1\to\hat S^1$ preserving the properties of $\tilde\zeta$, i.e., remaining a meromorphic mapping with the same poles at finite of $\tilde\zeta$ and pointed at $s=\frac{1}{2}$, i.e., $\tilde\zeta(\frac{1}{2})=\hat\zeta(\frac{1}{2})$. Here the equality between points on $\mathbb{C}$ and on $\hat S^1$ is meant by means of the stereographic projection $\pi_\star:\hat S^1\to\mathbb{C}$.
\end{definition}

One can see that in order to obtain a meromorphic mapping $\hat S^1\to\hat S^1$, starting from $\tilde\zeta$, one cannot trivially extend $\tilde\zeta$ by considering the natural inclusion $\mathbb{C}\hookrightarrow \mathbb{C}\bigcup\{\infty\}$ and the stereographic projection $\hat S^1\to\mathbb{C}$. In fact one has the following lemma.
\begin{lemma}\label{trivial-extension}
Let us call {\em trivial quantum-complex extension} of $\tilde\zeta$ the mapping $\hat\zeta_{trivial}:\hat S^1\to\hat S^1$ given by the exact and commutative diagram {\em(\ref{trivial-extension-diagram})}.
\begin{equation}\label{trivial-extension-diagram}
    \xymatrix@C=1cm{0&0\ar[d]\\
    \mathbb{C}\ar[u]\ar[r]^{\tilde\zeta}&\mathbb{C}\ar[d]_{\hat i}\\
    \hat S^1=\mathbb{C}\bigcup\{\infty\}\ar[u]_{\pi_\star}\ar[r]_{\hat\zeta_{trivial}}&\mathbb{C}\bigcup\{\infty\}=\hat S^1\\}
\end{equation}
where $\pi_\star$ is the stereographic projection and $\hat i$ is the natural inclusion. Then the trivial quantum-complex extension of $\tilde\zeta$, even if it conserves the poles of $\tilde\zeta$ at finite and the pointing at $s=\frac{1}{2}$, it is not a holomorphic mapping and neither a meromorphic one. Therefore the trivial quantum-complex extension of $\tilde\zeta$ is not a quantum-complex extension in the sense of Definition \ref{fundamental-definition}.
\end{lemma}
\begin{proof}
See Appendix C for a detailed proof. (See also Fig. \ref{appendix-a-critical-circle-and-symmetric-zero-couples} for a more clear understanding of the convention used in this paper.)
\end{proof}

\begin{remark}[Fields of meromorphic functions]\label{fields-meromorphic-functions}
Lemma \ref{trivial-extension} can be paraphrased in algebraic way by saying that the stereographic projection $\pi_\star:\hat S^1\to\mathbb{C}$ does not allow to identify a field homomorphism $\mathcal{M}(\mathbb{C})\to\mathcal{M}(\hat S^1)$ between the field $\mathcal{M}(\mathbb{C})$ of meromorphic functions on $\mathbb{C}$ and the field $\mathcal{M}(\hat S^1)$ of meromorphic functions on $\hat S^1$. We could guess to work exactly in the opposite way, namely to represent $\mathcal{M}(\hat S^1)$ into $\mathcal{M}(\mathbb{C})$, since meromorphic functions on $\mathbb{C}$ are given by ratios $\frac{\phi(z)}{\psi(z)}$, where $\phi$ and $\psi$ are holomorphic functions on $\mathbb{C}$, and instead meromorphic functions on $\hat S^1$ are given by ratios $\frac{p(z)}{q(z)}$, where $p$ and $q$ are polynomials only. But in order to represent $\mathcal{M}(\hat S^1)$ into $\mathcal{M}(\mathbb{C})$ we have necessity to identify a canonical surjective mapping $\hat b:\mathbb{C}\to\hat S^1$. This is made in Lemma~\ref{uniqueness-quantum-extension-completed-riemann-zeta-function} below. But it is yet impossible by this simple continuous mapping realize with the induced homomorphism $\hat b_\star: \hat f\mapsto \hat f\circ\hat b$, holomorphic functions on $\mathbb{C}$, since $\hat b$ is not holomorphic. Then one understands that in order to realize field homomorphisms between the fields $\mathcal{M}(\mathbb{C})$ and $\mathcal{M}(\hat S^1)$, induced by relations between $\mathbb{C}$ and $\hat S^1$, one must introduce a more complex mechanism between such Riemann surfaces. This will be made by working in the category of maps, belonging to the category $\mathfrak{Q}_{\mathbb{C}}$, and whose morphisms are couples of morphisms belonging to the category $\mathfrak{T}$ of topological manifolds. Then we will understand the exact meaning of the exact and commutative diagram {\em(\ref{diagram-relations-field-meromorphic-functions})} relating fields of meromorphic functions with constant functions on $\mathbb{C}$ and $\hat S^1$ respectively. (The meromorphic functions is a field extension of $\mathbb{C}$.)

\begin{equation}\label{diagram-relations-field-meromorphic-functions}
    \xymatrix{0\ar[r]&\mathbb{C}\ar@{=}[d]\ar[r]&\mathcal{M}(\mathbb{C})\ar[d]\ar[r]&{\mathcal{M}(\mathbb{C})/\mathbb{C}}\ar[d]\ar[r]&0\\
    0\ar[r]&\mathbb{C}\ar[r]&\mathcal{M}(\hat S^1)\ar[u]\ar[r]&{\mathcal{M}(\hat S^1)/\mathbb{C}}\ar[u]\ar[r]&0\\}
\end{equation}
Then, in order to realize the concept of quantum-complex extension of $\tilde\zeta$ to $\hat S^1$, in the sense of Definition \ref{fundamental-definition}, we will work in the category of quantum-complex mappings and suitable morphisms between them. \footnote{For information on quantum manifolds see \cite{PRAS01, PRAS02} and related papers quoted therein. Let us emphasize that in this paper the quantum algebra considered is just $A=\mathbb{C}$, and the quantum $1$-sphere $\hat S^1$ coincides with the well known {\em Riemann sphere} or with the so-called {\em complex projective line}. (By following this approach we can also generalize the Riemann zeta function to the category $\mathfrak{Q}$, when the fundamental quantum algebra is not more commutative, hence does not coincide with $\mathbb{C}$, as happens in the case of quantum-complex manifolds. But this further generalization goes outside purposes of this paper, focused on the proof of the Riemann hypothesis.) In other words in this paper we shall work in the category $\mathfrak{Q}_{\mathbb{C}}$ of quantum-complex manifolds that coincides with the one of the complex manifolds.}
\end{remark}
But let us proceed step by step. We shall use the following lemmas.
\begin{lemma}\label{divisor-degree}
A {\em divisor} of a Riemann surface $X$ is a finite linear combination of points of $X$ with integer coefficients. Any meromorphic function $\phi$ on $X$, gives rise to a divisor denoted $(\phi)$ defined as $(\phi)=\sum_{q\in R(\phi)}n_q\, q$, where $R(\phi)$ is the set of all zeros and poles of $\phi$, and
\begin{equation}\label{divisor-numners}
n_q=\left\{\begin{array}{cc}
                                                                                m & \hbox{\rm if $q$ is a zero of order $m$} \\
                                                                                -m & \hbox{\rm if $q$ is a pole of order $m$}\\
                                                                              \end{array}\right.
\end{equation}

If $X$ is a compact Riemann surface, then $R(\phi)$ is finite. We call $n_q$ the {\em degree of $\phi$ at $q$}, and we write also
$(\phi)_q=n_q$. The {\em degree} (or {\em index}) of the divisor $(\phi)$ is defined by
$\hbox{\rm deg}(\phi)=\sum_{q\in R(\phi)}n_q=\sum_{q\in R(\phi)}(\phi)_q\in\mathbb{Z}$. If $q$ is neither a pole or zero of $\phi$, then we write $(\phi)_q=0$.
With respect to this, we can write $\hbox{\rm deg}(\phi)=\sum_{q\in X}(\phi)_q\in\mathbb{Z}$. Let $\phi$ be a global meromorphic
function $\phi$ on the compact Riemann surface $X$, then $\hbox{deg}(\phi)=0$.\footnote{The set $\mathbb{D}(X)$ of
divisors $D=\sum_qn_q\, q$ of $X$ is an Abelian group and the {\em degree} ${\rm deg}(D)=\sum_qn_q$ is an homomorphism
$\mathbb{D}(X)\to \mathbb{Z}$ of Abelian groups. If $f=f_1+f_2$ and $g=g_1/g_2$ are meromorphic functions, then one has
$(f)=(f_1+f_2)=(f_1)+(f_2)$ and $(g)=(g_1/g_2)=(g_1)-(g_2)$.
The divisor $(f)$ of a meromorphic function $f$ is called {\em principal}. In $\mathbb{D}(X)$ we introduce an order relation:
$D_1=\sum_qn_q\cdot q\le D_2=\sum_qm_q\cdot q$ iff $n_q\le m_q$ for all $q$. Furthermore, in the set $\mathbb{D}(X)$ one can
define also an equivalence relation: $D_1\backsim D_2\, \Leftrightarrow\, D_2=D_1+(f)$
for some principal divisor $(f)$. Important are divisors associated to meromorphic $1$-forms. A {\em meromorphic $1$-forms}
$\omega$ is a differential $1$-form $\omega$ that, in a suitable coordinate
system $\{z\}$ on $X$, can be written in the form $\omega=f\, dz$ with $f\in\mathcal{M}(X)$. We denote with $\mathcal{M}^1(X)$ the
space of meromorphic $1$-forms on $X$. Since $\mathcal{M}^1(X)\cong \mathcal{M}(X)$, considering $\mathcal{M}^1(X)$ a
$1$-dimensional vector space with respect to the field of meromorphic functions on $X$, it follows that all the divisors of meromorphic $1$-forms are
equivalent. ($\omega'=f\, \omega\, \Rightarrow\, (\omega')=(\omega)+(f)$.)
We will denote by $K$ the representative of this equivalence class and we call it the {\em canonical divisor} of $X$.
The {\em Riemann-Roch theorem} gives a relation between
divisors of a Riemann surface $X$ and its topology, characterized by the genus $g=g(X)$: $l(D)-i(D)={\rm deg}(D)+1-g$, where
$l(D)$ is the dimension of the $\mathbb{C}$-vector space $L(D)=\{f\in\mathcal{M}(X)\,|\, (f)+D\ge 0\}$.
$i(D)$ is the dimension of the $\mathbb{C}$-vector space $I(D)=\{\omega\in\mathcal{M}^1(X)\, |\, (\omega)\ge D\}$.
In Tab. \ref{examples-applications-riemann-roch-theorem} are resumed some examples of applications of Riemann-Roch theorem.}
\end{lemma}
\begin{proof}
This result is standard. (See, e.g. \cite{GRIFFITHS-HARRIS, LIU, JOST}.)
\end{proof}

\begin{table}[t]
\caption{Some relations between divisors, spaces of meromorphic functions and topology.}
\label{examples-applications-riemann-roch-theorem}
\scalebox{0.8}{$\begin{tabular}{|c|c|c|c|c|c|}
\hline
\hfil{\rm{\footnotesize $D$}}\hfil&\hfil{\rm{\footnotesize ${\rm deg}(D)$}}\hfil&\hfil{\rm{\footnotesize $l(D)$}}\hfil&
\hfil{\rm{\footnotesize $L(D)$}}\hfil&\hfil{\rm{\footnotesize $i(D)$}}\hfil&\hfil{\rm{\footnotesize $I(D)$}}\hfil\\
\hline
\hfil{\rm{\footnotesize }}\hfil&\hfil{\rm{\footnotesize ${\rm deg}(D)<0$}}\hfil&\hfil{\rm{\footnotesize $l(D)=0$}}\hfil&
\hfil{\rm{\footnotesize $L(D)=\varnothing$}}\hfil&\hfil{\rm{\footnotesize }}\hfil&\hfil{\rm{\footnotesize }}\hfil\\
\hline
\hfil{\rm{\footnotesize $D=0$}}\hfil&\hfil{\rm{\footnotesize ${\rm deg}(D)=0$}}\hfil&\hfil{\rm{\footnotesize $l(D)=1$}}\hfil&
\hfil{\rm{\footnotesize $L(D)=C$}}\hfil&\hfil{\rm{\footnotesize $i(D)=g$}}\hfil&\hfil{\rm{\footnotesize $I(D)=L(K)$}}\hfil\\
\hline
\hfil{\rm{\footnotesize $D=K$}}\hfil&\hfil{\rm{\footnotesize ${\rm deg}(D)=-\chi(X)=2g-2$}}\hfil&\hfil{\rm{\footnotesize $l(D)=g=i(0)$}}\hfil&
\hfil{\rm{\footnotesize $L(D)=I(0)$}}\hfil&\hfil{\rm{\footnotesize $i(D)=1$}}\hfil&\hfil{\rm{\footnotesize $I(D)=L(0)$}}\hfil\\
\hline
\multicolumn{6}{l}{\rm{\footnotesize One has the isomorphisms: $I(D)\cong L(K-D)$;  $I(K-D)\cong L(D)$.}}\\
\multicolumn{6}{l}{\rm{\footnotesize If $D_1 \backsim D_2$ one has $L(D_1)\cong L(D_2)$; $I(D_1)\cong I(D_2)$.}}\\
\multicolumn{6}{l}{\rm{\footnotesize Riemann-Roch theorem: $l(D)-i(D)={\rm deg}(D)-1+g$.}}\\
\multicolumn{6}{l}{\rm{\footnotesize For $X=\hat S^1$ one has $K=-2\, q_\infty$.
${\rm deg}(K)=-2$, $l(K)=0$, $L(K)=\varnothing$, $I(K)=\mathbb{C}$, $i(K)=1$.}}\\
\multicolumn{6}{l}{\rm{\footnotesize [$\omega=dz$ in the south pole open disk $\triangle_-$ to which
correspondes]}}\\
\multicolumn{6}{l}{\rm{\footnotesize the form $\omega=d(\frac{1}{z})=-\frac{1}{z^2}dz$ in the north pole open disk $\triangle_+$,
since the transition map is $z\to \frac{1}{z}$]}}\\
\end{tabular}$}
\end{table}

\begin{lemma}\label{quantum-complex-zeta-riemann-function}
The completed zeta Riemann function $\tilde\zeta:\mathbb{C}\to\mathbb{C}$, identifies a surjective quantum mapping $\hat\zeta:\hat S^1\to\hat S^1$, with total branching index $b=2$, that we call {\em quantum-complex zeta Riemann function}. This is a meromorphic function between two Riemann spheres, with two simple poles and two simple zeros, and ${\rm deg}(\hat\zeta)=2$. $\hat\zeta$ identifies a ramified covering over $\hat S^1$, with two ramification points.
For all but the two branching points $w\in\hat S^1$, the equation $\hat\zeta(s)=w$ has exactly two solutions.
\end{lemma}
\begin{proof}
$\hat\zeta(s)$ is the unique meromorphic function $\hat\zeta:\hat S^1\to\hat S^1$ having the same poles at finite of $\tilde\zeta$, the two symmetric zeros of $\tilde\zeta$ lying on the critical line and nearest to the $x$-axis, and such that for a fixed $s_0\in\mathbb{C}$ one has satisfied condition  (\ref{definition-commutative-diagram-quantum-zeta-riemann-function}).
\begin{equation}\label{definition-commutative-diagram-quantum-zeta-riemann-function}
  \hat\zeta(s_0)=\tilde\zeta(s_0),\, s_0\in\mathbb{C}.
\end{equation}
More precisely the function $\hat\zeta(s)$ is a meromorphic function characterized by the divisor
\begin{equation}\label{divisor-hat-zeta}
    (\hat\zeta)=1\cdot q_{z_0}+1\cdot q_{-z_0}-1\cdot q_0-1\cdot q_1.
\end{equation}

 (See in Appendix A for an explicit proof.) Therefore, the process of Alexandrov compactification produces the reduction to only two zeros on the {\em critical circle}, $S^1\subset\hat S^1$, i.e., the compactified critical line, by an universal covering induced phenomena.\footnote{Let us note that $\tilde\zeta(s=\frac{1}{2}+it)\equiv \tilde\zeta(t)\in\mathbb{R}$, namely $\tilde\zeta(s)$ on the critical line is a real valued function. This follows directly from the functional equation (\ref{complted-functional-equation}). In fact, $\tilde\zeta(\frac{1}{2}+it)=\tilde\zeta(1-\frac{1}{2}-it)=\tilde\zeta(\frac{1}{2}-it)=\tilde\zeta(\overline{\frac{1}{2}+it})=\overline{\tilde\zeta(\frac{1}{2}+it)}$, hence $\Im(\tilde\zeta(\frac{1}{2}+it))=0$.}  (The situation is pictured in Fig. \ref{graph-completed-function-on-critical-line-and-corresponding-covering}.) The surjectivity of $\hat\zeta:\hat S^1\to\hat S^1$ follows from the following lemma.

\begin{lemma}\label{surjectivity-holomorphic-maps-between-compact-riemann-surfaces}
A non-constant holomorphic map between compact connected Riemann surfaces is surjective.
 \end{lemma}
 \begin{proof}
 The Proof is standard. (See, e.g., \cite{DONALDSON,JOST}.)
 \end{proof}
 Furthermore, since $\hat\zeta(s)$ is a rational function, $\hat\zeta(s)=\frac{p(s)}{q(s)}$, with ${\rm deg}(p)=2={\rm deg}(q)$, it follows that ${\rm deg}(\hat\zeta)=2$ and for all but finitely many $w$, the equation $\hat\zeta(s)=w$ has exactly $\max({\rm deg}(p),{\rm deg}(q))$ solutions.\footnote{For example, $\hat\zeta(s)=\infty$ has solutions $s\in\{0,1\}$, and $\hat\zeta(s)=0$ has solutions $s\in\{-z_0,z_0\}$.} The total branching index $b$ of $\hat\zeta$ can be calculated by using the following lemma.

 \begin{lemma}[Riemann-Hurwitz formula]\label{riemann-hurwitz-formula}
Let $f:R\to S$ be a non-constant holomorphic map between compact connected Riemann surfaces. Then the following algebraic topologic formula holds.
\begin{equation}\label{riemann-hurwitz-formula-equation}
\chi(R)={\rm deg}(f)\, \chi(S)-b
\end{equation}
where $\chi(X)=2-2\, g$ is the Euler characteristic of any compact, connected, orientable surface $X$, without boundary, and $g$ denotes its genus: $g=\frac{1}{2}{\rm dim}_{\mathbb{C}}H_1(X;\mathbb{C})$.\footnote{From this formula it follows that $b$ must be an even integer too. Furthermore, this lemma can be generalized to non-compact Riemann surfaces, such that $f$ is a proper map and $\chi(S)$ is finite. (As a by product it follows that also $\chi(R)$ is finite.) (Let us recall that $f$ is {\em proper} iff $f^{-1}$ of a compact set is compact.)}
\end{lemma}
 \begin{proof}
 The Proof is standard. (See, e.g., \cite{DONALDSON,HIRZEBRUCH,JOST})
 \end{proof}
 Since $\chi(\hat S^1)=2$ it follows that $b=2\cdot 2-2=2$. In order to identify where the ramifications points are located, let us find the roots of the equation $\hat\zeta'(s)=0$. We get, $\hat\zeta'(s)=c_0\frac{-2(\frac{1}{4}+\alpha^2)s+(\frac{1}{4}+\alpha^2)}{s^2(s-1)^2}$, with $\alpha=\Im(z_1)=-\Im(z_2)$. Therefore the unique critical point at finite is $s=\frac{1}{2}$, with multiplicity $e_{\frac{1}{2}}=2$, since $\hat\zeta^{(2)}(\frac{1}{2})\not=0$. From the Riemann-Hurwitz theorem we get that also the point $\infty$ is a ramification point with ramification index $e_{\infty}=2$. See the commutative diagram (\ref{riemann-hurwitz-diagram-quantum-complex-zeta-riemann}).
 \begin{equation}\label{riemann-hurwitz-diagram-quantum-complex-zeta-riemann}
    \xymatrix@C=0.5cm{\chi(\hat S^1)\ar@{=}[d]\ar@{=}[r]&2\cdot {\rm deg}(\hat\zeta)-[(e_\frac{1}{2}-1)+(e_\infty-1)]\ar@{=}[d]\\
    2\ar@{=}[r]&4-[1+1]\\}
 \end{equation}

 Removing from $\hat S^1$ the branch points $\hat\zeta(\infty),\, \hat\zeta(\frac{1}{2})\in\hat S^1$, we get a punctured sphere. The number of the points in the inverse image $\hat\zeta^{-1}(q)$, for $q$ belonging to such a punctured sphere,\footnote{This punctured sphere is the parabolic Riemann surface called {\em annulus} or {\em cylinder}.} is integer-valued and continuous, hence constant. It coincides with the degree of $\hat\zeta$, namely ${\rm deg}(\hat\zeta)=2$.
Therefore $\hat\zeta$ identifies a ramified cover of $\hat S^1$, with two ramification points $s=\frac{1}{2}$ and $s=\infty$, and two branching points $\hat\zeta(\frac{1}{2})$ and $\hat\zeta(\infty)$ respectively. After removing the two branch points and their two preimages, $\hat\zeta$ induces a double topological covering map.\footnote{Recall that the {\em degree of a branched cover} coincides with the degree of the induced covering map, after removing the branch points and the corresponding ramifications points.}
\end{proof}

 \begin{figure}[h]
 \includegraphics[height=3cm]{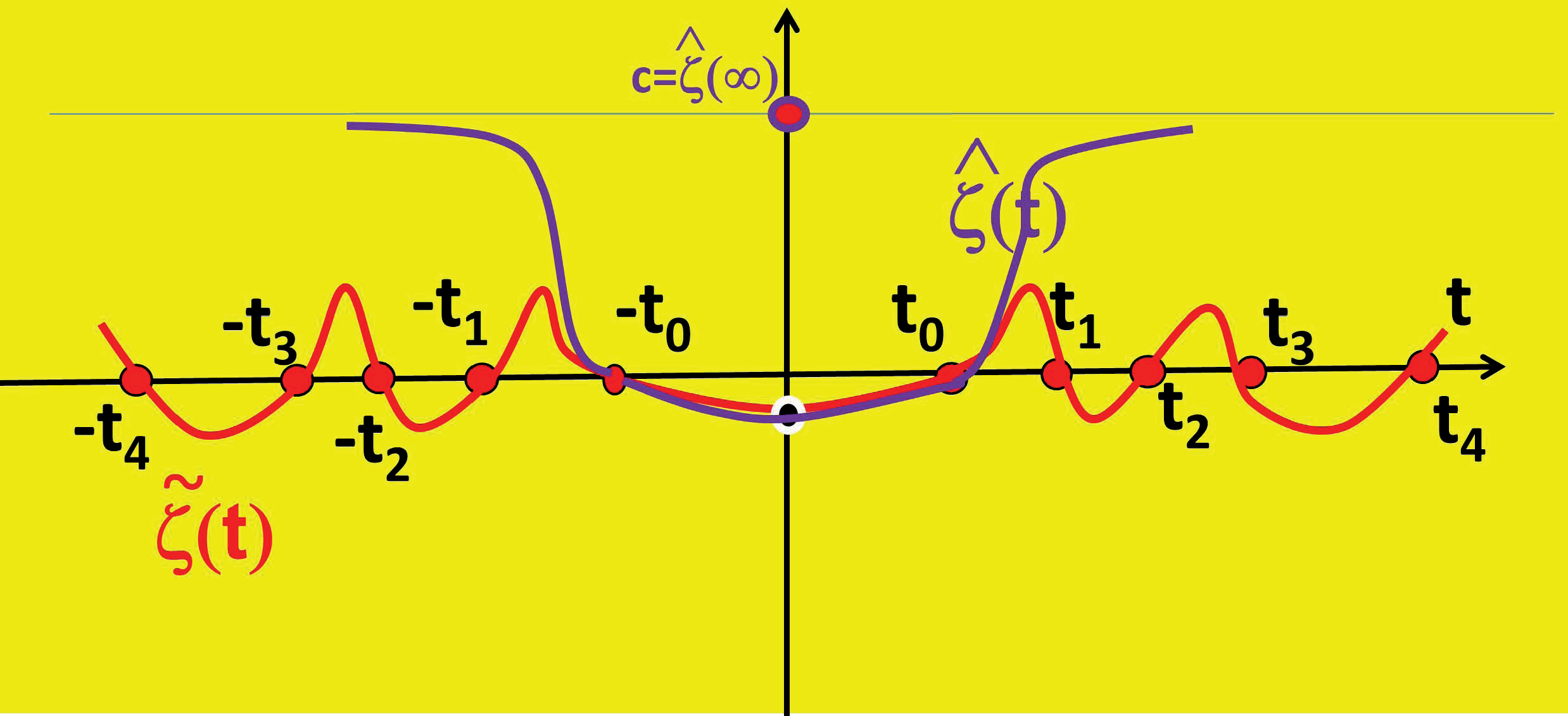}
 \includegraphics[height=3cm]{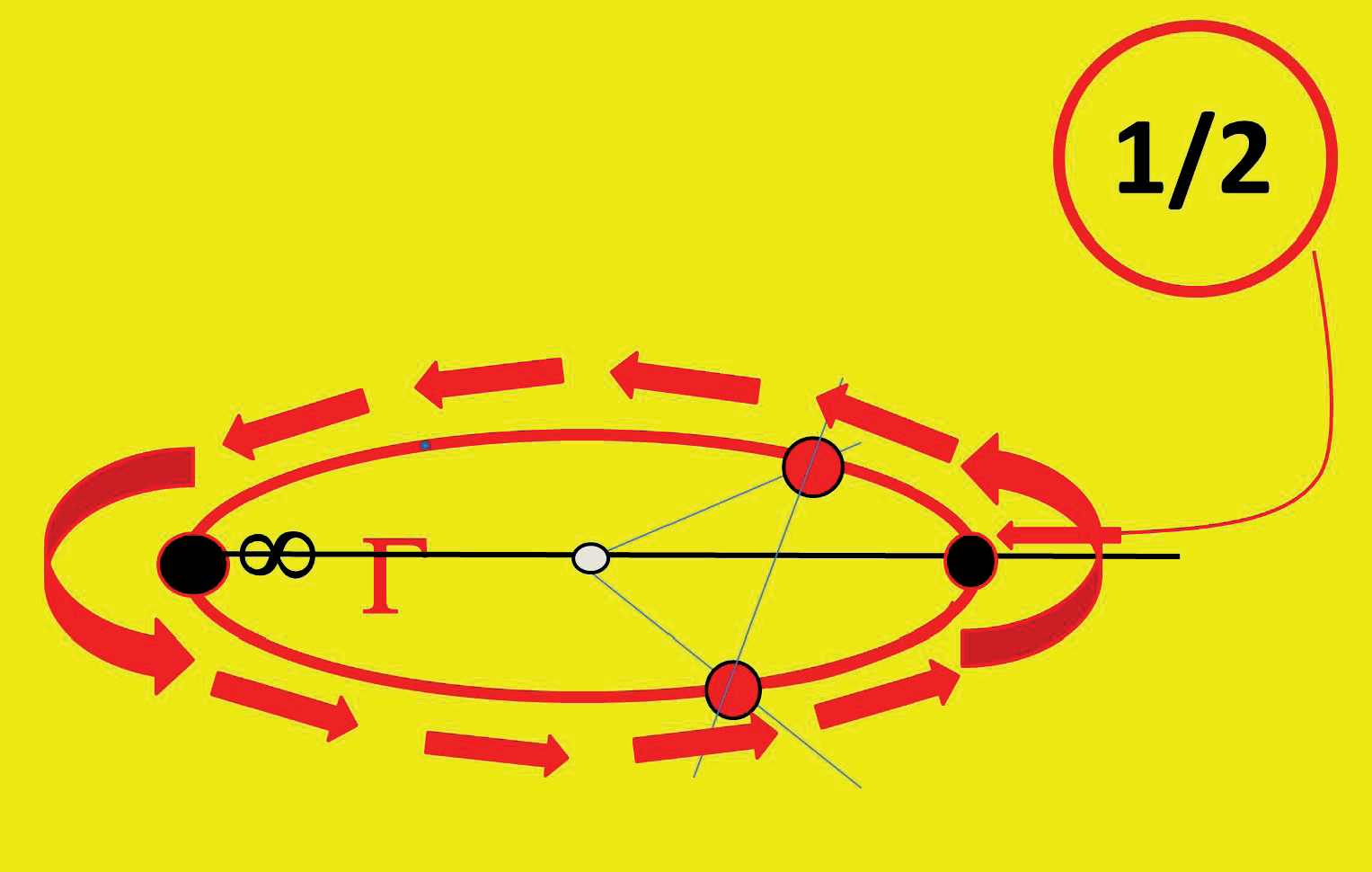}
\renewcommand{\figurename}{Fig.}
\caption{Representation of the numerical functions $\tilde\zeta:\mathbb{R}\to\mathbb{R}$ and $\hat\zeta:\mathbb{R}\to\mathbb{R}$, restrictions to the critical line of $\tilde\zeta$ and $\hat\zeta$ respectively, (figure on the left), There is reported also the point base $\tilde\zeta(t=0)=\tilde\zeta(\frac{1}{2})=-0.05438=\hat\zeta(\frac{1}{2})$, (withe-circled-black point). The red circle $\Gamma$, in the figure on the right, represents the compactification to $\infty$, of the critical line. There the other black point, emphasized with the circled $\frac{1}{2}$, is the point corresponding to $s=\frac{1}{2}$ on the $x$-axis in $\mathbb{C}$, ($\Im(s)=0$). In this picture the red arrows circulation mean the covering of the critical line in such a way to project all zeros of $\tilde\zeta$ on the two zeros of $\hat\zeta$. (See in the proof of Lemma \ref{quantum-complex-riemann-zeta-covering}.)}
\label{graph-completed-function-on-critical-line-and-corresponding-covering}
\end{figure}

\begin{definition}\label{quantum-extension}
Let $\mathfrak{Q}_{\mathbb{C}}$ be the category of quantum-complex manifolds. Let $f:\mathbb{C}\to\hat S^1$ be a morphism in this category, such that there exists a finite number of points $p_k\in\mathbb{C}$, $k=1,\cdots, n$, with $f(p_k)=\infty$.\footnote{In other words $f$ is a meromorphic function with poles $p_k$.} We call {\em quantum-extension} of $f$, a morphism $\hat f:\hat S^1\to\hat S^1$, in the category $\mathfrak{Q}_{\mathbb{C}}$, such that $\hat f(\hat p_k)=\infty$, where $\hat p_k\in \hat S^1$ are points corresponding to $p_k$, via the stereographic projection $\pi_\star:\hat S^1\to\mathbb{C}$. There are not other points $q\in\hat S^1$ such that $\hat f(q)=\infty$. Furthermore, if $f(z_k)=0$, then $\hat f(\hat z_k)=0$ for a finite number of zeros $z_k$ of $f$, that are nearest to the $x$-axis. Here $\hat z_k$ is the point on $\hat S^1$, corresponding to $z_k$, via the stereographic projection $\pi_\star$.
\end{definition}
\begin{definition}\label{pointed-quantum-extension}
We call {\em pointed quantum-extension} of $f\in Hom_{\mathfrak{Q}_{\mathbb{C}}}(\mathbb{C},\hat S^1)$ a quantum extension $\hat f$ such that $\hat f(\hat q_0)=f(q_0)$, where $q_0\in\mathbb{C}$ is a fixed point and $\hat q_0\in\hat S^1$ is the point on $\hat S^1$, corresponding to $q_0$ via the stereographic projection. Then we say also that the quantum-extension $\hat f$ is {\em pointed} at $q_0$.
\end{definition}

\begin{lemma}[Uniqueness of quantum-extension of completed Riemann zeta function]\label{uniqueness-quantum-extension-completed-riemann-zeta-function}
The quantum-complex zeta Riemann function $\hat\zeta$ is the unique quantum-extension of $\tilde\zeta$, pointed at $q_0=\frac{1}{2}$.
\end{lemma}
\begin{proof}
From above results it follows that $\hat\zeta$ is surely a pointed quantum-extension of $\tilde\zeta$ (pointed at $q_0=\frac{1}{2}$.) The uniqueness follows from Lemma A\hskip-0.5pt\ref{appendix-a-lemma-c} in Appendix A. In fact, the pointing fixes the arbitrary constant $c\in\mathbb{C}$, present there in equation (\ref{appendix-a-equation-appendix-lemma-c}). Let us also emphasize that we could consider other zeros, outside the critical line, but yet in the critical strip, if they should exist. These should be at sets of four, symmetrically placed with respect to the $x$-axis and with respect to the critical line. In such a case $\hat\zeta$ could not more have only two simple poles at $0$ and $1$, but it should have also a double point at $\infty$. In fact all such points should have the same distance with respect to the $x$-axis. But in this case $\hat\zeta$ could not be a quantum-extension of $\tilde\zeta$, in the sense of Definition \ref{quantum-extension}. Below and in appendices it is also carefully proved why such eventual zeros $\tilde\zeta$, outside the critical line, cannot exist. Therefore the quantum-complex zeta Riemann function $\hat\zeta$, considered in Lemma \ref{quantum-complex-zeta-riemann-function}, is the unique pointed quantum-extension of $\tilde\zeta$.
\end{proof}

\begin{lemma}\label{quantum-complex-riemann-zeta-covering}
The completed Riemann zeta function $\tilde\zeta:\mathbb{C}\to\mathbb{C}$ identifies a surjective meromorphic mapping $\tilde\zeta:\mathbb{C}\to\hat S^1$, (hence yet denoted $\tilde\zeta$). Furthermore, we can identify a continuous surjective mapping $\hat b:\mathbb{C}\to\hat S^1$, such that all zeros $\pm z_k$ on the critical line of $\tilde\zeta$, are projected to $\pm z_0$, i.e., the zeros of $\hat\zeta$, belonging to the critical circle $\Gamma\subset\hat S^1$ and nearest to the x-axis or equator.
\end{lemma}

\begin{proof}
Since $\tilde\zeta$ is a meromorphic function on $\mathbb{C}$, with simple poles in $0,\, 1\in \mathbb{C}$, it identifies an holomorphic function (yet denoted $\tilde\zeta$):
 \begin{equation}\label{riemann-sphere-valued-composition}
    \xymatrix{\mathbb{C}\ar@/_1pc/[rr]_{\tilde\zeta}\ar[r]^{\tilde\zeta}&\mathbb{C}\ar@{^{(}->}[r]^(0.3){\hat i}&\mathbb{C}\bigcup\{\infty\}\\}
 \end{equation}
 The surjectivity of the map $\tilde\zeta:\mathbb{C}\to\hat S^1$ follows directly from its holomorphy. In fact this map is open
 and continuous.  Then let $D_r$ denote a closed disk in $\mathbb{C}\cong \mathbb{R}^2$, centered at $0$ and of radius $r$, such that
 it contains other the point $0$ also the points $1$ and the zeros $\pm z_0$ on the critical line.
 Then $\tilde\zeta(D_r)$ is open in $\hat S^1\cong S^2$ and compact, hence closed. Furthermore, $\tilde\zeta(D_r)$ contains the
 points $0$, $\infty$ and $\tilde\zeta(\frac{1}{2})$ that is placed on the meridian on $S^2$ corresponding to the $x$-axis in
 $\mathbb{R}^2$. Therefore, $\tilde\zeta(D_r)$ is necessarily a compact surface in $S^2$ with boundary a circle $\Gamma_r$.
 When $r\to\infty$, $\Gamma_r$ converges for continuity to a point $\maltese\in S^2$. In fact, let us assume that the sequence
 \begin{equation}\label{sequences-filtering-image-completed-riemann-zeta-function}
    \tilde\zeta(D_r)\subseteq \tilde\zeta(D_{r'})\subseteq\tilde\zeta(D_{r''})\subseteq \tilde\zeta(D_{r'''})\subseteq\cdots,\,
    r<r'<r''<r'''<\cdots
 \end{equation}
 stabilizes, i.e., there exists a maximal disk $D_{r_{max}}\subset\mathbb{R}^2$, such that
 $\tilde\zeta(D_{s})=\tilde\zeta(D_{r_{max}})$, for any $D_s\supset D_{r_{max}}$. Then there exists also an open disk $\triangle\subset\hat S^1$
 such that $ \tilde\zeta(D_{r_{max}})\subset \triangle$. Thus we can state that $\tilde\zeta$ is a global holomorphic bounded mapping
 $\tilde\zeta:\mathbb{C}\to\triangle$. But such a mapping must be constant for the {\em Liouville's theorem} in complex analysis. Therefore,
 the hypothesis that the sequence (\ref{sequences-filtering-image-completed-riemann-zeta-function}) stabilizes is in contradiction with
 the fact that $\tilde\zeta$ is not a constant map. By conclusion $\tilde\zeta:\mathbb{C}\to\hat S^1$ must be a surjective map.
 Furthermore, one has a continuous mapping $\hat b:\mathbb{C}\to\hat S^1$, sending the zeros of $\tilde\zeta$, on the critical line, to cover the two zeros of $\hat\zeta$. In fact, let us first consider the mapping $\hat a:\mathbb{C}\to\hat S^1$, defined by $\hat a:(z=x+i\, y)\mapsto(x_0,e^{i2\pi y})$, where $x_0$ is identified by the stereographic projection as a point of the circle $\hat S^1_{x}\subset\hat S^1$, corresponding in the stereographic projection to the $x$-axis of $\mathbb{C}$. Let $\Gamma_{x_0}$ be the circle in $\hat S^1$, passing for $\infty$ and $x_0$ and having its plane parallel to the $y$-axis. Then starting from the vector $\overrightarrow{C_{x_0}x_0}$, in the plane of $\Gamma_{x_0}$, with center $C_{x_0}$, we calculate the angle $2\pi\, y$ that identifies a point on $\Gamma_{x_0}$. In this way we realize a covering $\hat a:\mathbb{C}\to\hat S^1$ with fiber $\mathbb{Z}$. In fact, for any point $p\in\hat S^1$ we can identify the corresponding circle $\Gamma_{x_0}$ passing for $p$, hence the point $x$ on the $x$-axis of $\mathbb{C}$. Furthermore, $p$ identifies an angle $\alpha=2\pi\, y+2\pi n$, hence $y=\frac{\alpha}{2\pi}-n$, $n\in\mathbb{Z}$. For continuity we put $|\hat a^{-1}(p)|=\infty$. This allows us to identify the covering $\mathbb{Z}\to\mathbb{C}\to\hat S^1$.  Let us now consider the real function $\tilde\zeta(t)$ obtained by restriction of $\tilde\zeta$ on the critical line. With this respect, we define the following continuous mapping $\phi:\mathbb{R}^2\to\mathbb{R}^2$, $(x,y)\mapsto(\bar x, \bar y)=(x,f(y))$, such that $f(y)$ is obtained by sectorial retraction (dilation) in order to deform all intervals $z_1-z_0$, $z_2-z_1,\, \cdots, z_k-z_{k-1},\,\cdots$ in ones of the same length $2\pi$. In this way the composition mapping $\hat b=\hat a\circ \phi:\mathbb{C}\to\hat S^1$ identifies a continuous mapping that projects all the zero $\pm z_k$ of $\tilde\zeta$ on the same couple $\pm z_0$ respectively, of $\Gamma=\Gamma_{(\frac{1}{2})}$. The mapping $\hat b$ is continuous but not holomorphic. (See in Appendix B for an explicit proof.)\footnote{For example if $(x,y)\in \mathbb{R}^2$, with $z_0\le y\le z_1$, then the transformation of $\mathbb{R}^2$, in this sector (or strip), is $(x,y)\mapsto (x,\bar y)$, with $\bar y=z_0+\frac{y-z_0}{z_1-z_0}$. Therefore, the points of this sector are projected on $(x_0,e^{i2\pi\bar
y})=(x_0,e^{i2\pi(z_0+\frac{y-z_0}{z_1-z_0}})$.} We call $\hat b$ the {\em zeros-Riemann-zeta-normalized universal covering} of $\hat S^1$.\footnote{Let us emphasize that this mapping $\hat b$ can be related in some sense to the Weierstrass-$\wp$ function. In fact, a surjection $\mathbb{C}\to\hat S^1$ can be identified by utilizing the projection of $\mathbb{C}$ on the torus $\mathbb{C}/L$, where $L=\mathbb{Z}\, \omega_1+\mathbb{Z}\, \omega_2\subset\mathbb{C}$ is a lattice with $\omega_1,\, \omega_2\not=0$, $\omega_1/\omega_2\not\in\mathbb{R}$. In fact the quotient $(\mathbb{C}/L)/\{\pm 1\}$ can be identified with $\hat S^1$. In other words, one has the exact commutative diagram (\ref{commutative-diagram-quantum-zeta-riemann-function}). There it is also emphasized the relation between the torus $T=S^1\times S^1$ and the smash product $S^1\wedge S^1=(S^1\times S^1)/S^1\vee S^1\backsimeq S^2$. We can consider the wedge sum $S^1\vee S^1=(S^1\times\{a\}\bigcup S^1\times\{b\})/\{a,b\}$ reduced to $\infty$ in $S^2$. It is important to emphasize also that any continuous mapping $f:\mathbb{C}/L\to\hat S^1$, can be factorized in the form $f=h\circ \wp$, where $\wp:\mathbb{C}/L\to\hat S^1$ is the Weierstrass $\wp$-function and $h:\hat S^1\to\hat S^1$ is some continuous mapping. Recall that $\wp$ is defined by the following series: $\wp(u)=\frac{1}{u^2}+\sum_{\omega\in L^\bullet}[\frac{1}{(u-\omega)^2}-\frac{1}{\omega^2}]$, converging to an {\em elliptic function}, i.e., a doubly periodic meromorphic function on $\mathbb{C}$, hence a meromorphic function on a torus $\mathbb{C}/L$. (A doubly periodic holomorphic function on $\mathbb{C}$ is constant.) $\wp$ is a degree $2$ holomorphic map with branch points over $\infty,\, e_k$, $k=1,2,3$, where $e_k=\wp(u_k)$, $u_k\in\{\frac{\omega_1}{2},\frac{\omega_2}{2},\frac{\omega_1+\omega_2}{2}\}$. In other words, $\wp$ has the following four ramification points: $\{0,u_k\}_{1\le k\le 3}\subset \mathbb{C}/L$. For any $w\in\mathbb{C}$. the equation $\wp(u)=w$ has two simple roots in the period parallelogram; instead for $w=e_k$, the equation $\wp(u)=w$ has a single double root. Every elliptic function $f(u)$ can be expressed uniquely by means of $\wp$ and its first derivative $\wp'$, as follows: $f(u)=R_0(\wp(u))+\wp'(u)\, R_1(\wp(u))$, where $R_i$, $i=0,1$, denote rational functions. If $f(u)$ is even (resp. odd) then in the above expression compares only the term with the function $R_0$ (resp. $R_1$).}
\end{proof}

\begin{equation}\label{commutative-diagram-quantum-zeta-riemann-function}
\scalebox{0.7}{$
\xymatrix{
  0&\ar[l]\mathbb{R}^2/\mathbb{Z}^2\ar@{=}^{\wr}[dddd]&\ar[l]\mathbb{R}^2\ar@{=}[d]\ar@{=}^{\sim}[dr]&&&\\
  &&\mathbb{R}\times\mathbb{R}\ar[d]&\mathbb{C}\ar[d]\ar[r]^{\hat b}&\hat S^1\ar[r]\ar@{-}^{\wr}[d]&0\\
  &&(\mathbb{R}/\mathbb{Z})\times(\mathbb{R}/\mathbb{Z})\ar@{=}[dd]&\mathbb{C}/L\ar[r]\ar[d]&(\mathbb{C}/L)/\{\pm 1\}\ar@{-}^{\wr}[d]\ar[r]&0\\
  &&&0&S^2\ar@{=}^{\wr}[d]&\\
  &T\ar@{=}[r]&S^1\times S^1\ar[d]\ar@{=}^(0.4){\sim}[uur]\ar[d]\ar[rr]&&S^1\wedge S^1\ar[r]&0\\
  &&0&&&\\}$}
\end{equation}

From Lemma~\ref{quantum-complex-zeta-riemann-function} and Lemma~\ref{quantum-complex-riemann-zeta-covering} we can state that passing from the function $\tilde\zeta$ to $\hat\zeta$, all zeros of $\tilde\zeta$ on the critical line converge to two zeros only. Furthermore, no further zeros can have $\tilde\zeta$ outside the critical line, otherwise they should converge to some other zeros of $\hat\zeta$, outside the critical line. But such zeros of $\hat\zeta$ cannot exist since all possible zeros of $\hat\zeta$ are reduced to only two, and these are just the ones considered on the critical circle $\Gamma$. We can resume the relation between $\tilde\zeta$ and $\hat\zeta$ also by means of the exact commutative diagram (\ref{commutative-diagram-quantum-zeta-riemann-function-a}) showing that $\hat\zeta$ is the (unique) meromorphic function on $\hat S^1$ on the which one can project $\tilde\zeta$ by means of the compactification process above considered.
 \begin{equation}\label{commutative-diagram-quantum-zeta-riemann-function-a}
\scalebox{0.8}{$\xymatrix@C=1cm{&\mathbb{C}\ar[dr]^{\hat a}&&\\
  \mathbb{C}\ar[ur]^{\phi}\ar[d]_{\tilde \zeta}\ar[rr]^{\hat b}&&
  \hat S^1\ar[d]_{\hat\zeta}\ar[r]&0\\
  \hat S^1\ar[d]\ar[rr]_{\hat b'}&&\hat S^1\ar[d]\ar[r]&0\\
  0&&0&\\}$}
\end{equation}
By resuming, we can say that the mapping $\hat b':\hat S^1\to\hat S^1$, in (\ref{commutative-diagram-quantum-zeta-riemann-function-a}), is uniquely identified with a continuous mapping such that diagram (\ref{commutative-diagram-quantum-zeta-riemann-function-a}) is commutative. Since $\hat b$ is continuous but not holomorphic, it follows that also $\hat b'$ is continuous but not holomorphic Thus the couple $(\hat b,\hat b')$ is a continuous but not holomorphic, it is not a morphism of the category $\mathfrak{Q}_{\mathbb{C}}$, but it is a morphism in the category $\mathfrak{T}$ of topological manifolds  that contains $\mathfrak{Q}_{\mathbb{C}}$. Therefore $(\hat b,\hat b')$ deforms the morphisms $\tilde\zeta$ and $\hat\zeta$ of $\mathfrak{Q}_{\mathbb{C}}$, each into the other.
\begin{remark}\label{remark-zeros-outside-critical-line}
Let us also emphasize that whether some zeros of $\tilde\zeta$ should exist outside the critical line, it should be impossible continuously project $\tilde\zeta$ onto $\hat\zeta$ by means of the morphism $\hat\beta=(\hat b,\hat b')$. In fact, in such a case the mapping $\hat b'$ could not be surjective, since, whether $\tilde\zeta(s_k)=0$, for some zero $s_k$ outside the critical line, then $\hat\zeta(\hat b(s_k))\not=0$. Therefore, $\hat b'$ should map $0$ to $\{\hat\zeta(s_k)\not=0,0\}$, hence $\hat b'$ should be multivalued. Therefore, in order that $\tilde\zeta$ projects on $\hat\zeta$,  by means of the morphism $\hat\beta$, it is necessary that $\tilde\zeta$ has not zeros outside the critical line.
\end{remark}

Taking into account above results it is useful to introduce now some definitions and results showing a relation betweeen the proof of the Riemann hypothesis and an important algebraic topologic property of the special functions involved. In fact, one has the following lemma.

\begin{lemma}[Bordism of completed Riemann zeta function]\label{bordism-morphism-and-quantum-riemann-zeta-function}
The quantum-complex zeta Riemann function $\hat\zeta$ canonically identifies a quantum-complex bordism element, $[\hat\zeta]\in\hat\Omega_1(\tilde\zeta)$, even if $\tilde\zeta$ has some zeros outside the critical line. Here $\hat\Omega_1(\tilde\zeta)$ denotes the $1$-quantum-complex bordism group of the completed Riemann zeta function.
\end{lemma}
\begin{proof}
Let us first consider some definitions and results related to bordism of morphisms.\footnote{The subject considered in this lemma is a generalization to the category $\mathfrak{Q}_{\mathbb{C}}$ of some particular bordism groups introduced by R. E. Stong \cite{STONG}. (See, also \cite{ATIYAH} for related subjects.)} Let $\mathfrak{T}$ be the category of topological manifolds and $f\in Hom_{\mathfrak{T}}(X,Y)$, $f'\in Hom_{\mathfrak{T}}(X',Y')$, two morphisms in the category $\mathfrak{T}$, in the following called {\em maps}. Let us also define {\em morphism} $\alpha:f\to f'$ a pair $\alpha=(a,a')$, with $a\in Hom_{\mathfrak{T}}(X;X')$, $a'\in Hom_{\mathfrak{T}}(Y;Y')$, such that the diagram (\ref{commutative-diagram-morphism}) is commutative.
\begin{equation}\label{commutative-diagram-morphism}
\xymatrix@C=2cm{X\ar[d]_{a}\ar[r]^{f}&Y\ar[d]_{a'}\\
X'\ar[r]_{f'}&Y'\\}
\end{equation}
In particular we say that $\alpha$ {\em projects} the morphism $f$ on $f'$, if the commutative diagram
(\ref{commutative-diagram-morphism-surjective}) is exact.
\begin{equation}\label{commutative-diagram-morphism-surjective}
\xymatrix@C=2cm{X\ar[d]_{a}\ar[r]^{f}&Y\ar[d]_{a'}\\
X'\ar[d]\ar[r]_{f'}&Y'\ar[d]\\
0&0\\}
\end{equation}
In other words, $\alpha$ is surjective if $a$ and $a'$ are so. In such a case we write $\alpha:f\to f'\to 0$.
A {\em quantum-complex map} is a morphism $\hat f$ in the subcategory $\mathfrak{Q}_{\mathbb{C}}\subset\mathfrak{T}$,
$\hat f\in Hom_{\mathfrak{Q}_{\mathbb{C}}}(\hat X;\hat Y)$ such that $\hat X$ and $\hat Y$ are compact quantum-complex manifolds
and $\hat f(\partial\hat X)\subset\partial\hat Y$. A {\em closed quantum-complex map} is a  quantum-complex map, with
 $\partial \hat X=\partial\hat Y=\varnothing$. A {\em quantum-complex bordism element} of a map $f\in Hom_{\mathfrak{T}}(X,Y)$
 is an equivalence class of morphisms $\alpha:\hat f\to f$, where $\hat f$ is a closed quantum-complex map.
 Two morphisms $\alpha_i=(a_i,a'_i):(\hat\phi_i,\hat M_i,\hat N_i)\to f$, $i=1,2$, are equivalent if there is a morphism
 $\alpha=(a,a'):\hat\Phi=(\hat\phi,\hat V,\hat W)\to f$, with $\hat\Phi$ a quantum-complex map such that
 $\partial \hat V=\hat M_1\sqcup\hat M_2$, $\partial\hat W=\hat N_1\sqcup\hat N_2$,
 $\hat\phi|_{\hat M_i}=\hat\phi_i$, $a|_{\hat M_i}= a_i$, $a'|_{\hat N_i}=a'_i$. The set of quantum-complex bordism elements
 of $f\in Hom_{\mathfrak{T}}(X,Y)$ forms an abelian group $\hat\Omega_\bullet(f)$, ({\em quantum-complex bordism group of $f$}),
 with the operation induced by the disjoint union; i.e., $[\alpha_1]+[\alpha_2]=[\alpha_1\bigcup\alpha_2]$, where
 $\alpha_1\bigcup\alpha_2=(a_1\bigcup a_2,a'_1\bigcup a'_2)$, with
 $a_1\bigcup a_2:\hat M_1\sqcup \hat M_2\to X$, $a'_1\bigcup a'_2:\hat N_1\sqcup \hat N_2\to Y$ are the obvious maps on the
 disjoint unions. We call {\em $n$-dimensional quantum-complex bordism group of $f$}, when
 $\dim_{\mathbb{C}}\hat M=\dim_{\mathbb{C}}\hat N=n$. Let us, now specialize to the completed Riemann zeta function
 $\tilde\zeta$. One has that the diagram (\ref{commutative-diagram-morphism-completed-riemann-zeta-function}) is
 commutative.
\begin{equation}\label{commutative-diagram-morphism-completed-riemann-zeta-function}
\xymatrix@C=2cm{0&0&\\
\mathbb{C}\ar[u]\ar[r]^{\tilde\zeta}&\hat S^1\ar[u] \ar[r]&0\\
\hat S^1\ar[u]_{\pi_\star}\ar[r]_{\hat\zeta}&\hat S^1\ar[u]^{\pi'_\star}\ar[r]&0\\}
\end{equation}
where $\pi_\star$ is the stereographic projection and $\pi'_\star$ is uniquely defined. In particular $\pi'_\star$ is surjective.\footnote{Let us emphasize that the composed mapping $\hat f=\tilde\zeta\circ\pi_\star:\hat S^1\to\hat S^1$ is not holomorphic or meromorphic. Therefore it cannot be a quantum-complex zeta Riemann function. See Lemma \ref{trivial-extension} and Appendix C for a detailed proof.} Therefore diagram (\ref{commutative-diagram-morphism-completed-riemann-zeta-function}) is also exact. As a by-product we get that $\hat\zeta$ identifies a quantum-complex bordism element of $\tilde\zeta$, i.e., a morphism $\hat f\to\tilde\zeta$ is equivalent to $\hat\zeta\to\tilde\zeta$ if there exists a morphism $\alpha=(c,c'):\hat\Phi=(\hat\phi,\hat V,\hat W)\to\tilde\zeta$, such that $\partial\hat V=\hat S^1\sqcup \hat M$, $\partial\hat W=\hat S^1\sqcup \hat N$, $\hat\phi|_{\partial\hat V}=\hat\zeta\sqcup\hat f$,  $c|_{\partial\hat V}=\pi_\star\sqcup b$, $c'|_{\partial\hat W}=a'\sqcup \pi'_\star$. The situation is resumed in the commutative diagram (\ref{commutative-diagram-bordism-map-zeta-riemann-function}).
\begin{equation}\label{commutative-diagram-bordism-map-zeta-riemann-function}
\xymatrix@C=2cm{\mathbb{C}\ar[r]^{\tilde\zeta}&\hat S^1\\
\hat S^1\ar@{^{(}->}[d]\ar[u]_{\pi_\star}\ar[r]_{\hat\zeta}&\hat S^1\ar@{^{(}->}[d]\ar[u]^{\pi'_\star}\\
\hat V\ar@/^1pc/[uu]^{c}\ar[r]^{\hat\phi}&\hat W\ar@/_1pc/[uu]_{c'}\\
\partial \hat V=\hat S^1\sqcup\hat M\ar@/^3pc/[uuu]^{\pi_\star\cup b}\ar@{^{(}->}[u]\ar[r]_{\hat\phi|_{\partial\hat V}}&\partial\hat W=\hat S^1\sqcup\hat N\ar@{^{(}->}[u]\ar@/_3pc/[uuu]_{a\cup b'}\\
\hat M\ar@{^{(}->}[u]\ar@/^6pc/[uuuu]^{b}\ar[r]_{\hat f}&\hat N\ar@{^{(}->}[u]\ar@/_6pc/[uuuu]_{b'}\\}
\end{equation}

We denote by $[\hat\zeta]$ the equivalence class identified by $\hat\zeta$. Therefore one has $[\hat\zeta
]\in\hat\Omega_\bullet(\tilde\zeta)$. In other words, the exact and commutative diagram (\ref{commutative-diagram-morphism-completed-riemann-zeta-function}) says that the $\hat\zeta$ identifies a quantum-complex bordism element of $\tilde\zeta$, and that $\hat\zeta$ projects on $\tilde\zeta$.

$\bullet$\hskip 2pt $\hat\Omega(-)$ identifies a covariant functor on the category of maps and morphisms to the category of abelian groups and their morphisms. In other words, for any morphism $\alpha=(a,a'):f'\to f$ in $\mathfrak{T}$, with $f,\, f'\in Hom_{\mathfrak{Q}_{\mathbb{C}}}(X,Y)$, one has induced a homomorphism $\hat\Omega_\bullet(\alpha):\hat\Omega_\bullet(f')\to\hat\Omega_\bullet(f)$, by assigning to $\beta=(b,b'):f''\to f'$ the class $\alpha\circ\beta:f''\to f$, i.e., the diagram (\ref{commutative-diagram-functor-bordism}) is commutative.
\begin{equation}\label{commutative-diagram-functor-bordism}
\scalebox{0.6}{$\xymatrix@C=2cm{X\ar[rr]^{f}&&Y\\
&\circlearrowleft\alpha&\\
X'\ar[uu]^{a}\ar[rr]_{f'}&&Y'\ar[uu]^{a'}\\
&\circlearrowleft\beta&\\
X''\ar@/^3pc/[uuuu]\ar[uu]^{b}\ar[rr]_{f''}&&Y''\ar[uu]^{b'}\ar@/_3pc/[uuuu]\\}$}
\end{equation}

$\bullet$\hskip 2pt Furthermore, homotopic morphisms induce the same homomorphisms on the bordism groups. More precisely, if
 \begin{equation}\label{extension-homotopy-a}
 \left\{\begin{array}{l}
  \alpha=(a,a'):(\phi\times 1:X'\times[0,1]\to Y'\times[0,1])\to(f:X\to Y)\\
   \alpha_t=(a_t,a'_t):(\phi:X'\to Y')\to f, \hskip 2pt (a_t(x)=a(x,t),\, a'_t(y)=a'(t,y))\\
 \end{array}\right.
 \end{equation}
 namely the diagram (\ref{commutative-diagram-momotopic-morphism})
\begin{equation}\label{commutative-diagram-momotopic-morphism}
\xymatrix@C=2cm{X\ar[r]^{f}&Y\\
X'\times I\ar[u]^{a}\ar[r]_{\phi\times 1}&Y'\times I\ar[u]_{a'}\\}
\end{equation}
 is commutative, then $\hat\Omega_\bullet(\alpha_0)\cong\hat\Omega_\bullet(\alpha_1)$.

 It is important to underline that whether $\tilde\zeta$ should have some zero outside the critical line diagram (\ref{commutative-diagram-morphism-completed-riemann-zeta-function}) continues to be commutative, whether $\hat\zeta$ is the quantum-extension of the completed Riemann zeta function, i.e., its divisor is given by (\ref{divisor-hat-zeta}).
\end{proof}

\begin{lemma}\label{relations-bordism-groups}
Relations between the bordism groups $\hat\Omega_\bullet(\hat\zeta)$ and $\hat\Omega_\bullet(\tilde\zeta)$ are given by the diagram {\em(\ref{sequence-relation-bordism-groups-generalized})}.
\begin{equation}\label{sequence-relation-bordism-groups-generalized}
    \xymatrix{\hat\Omega_\bullet(\hat\zeta)\ar[r]^{\hat\alpha_\bullet}&\hat\Omega_\bullet(\tilde\zeta)\ar[d]_{\hat\beta_\bullet}\\
   & \hat\Omega_\bullet(\hat\zeta)\\}
\end{equation}
The homomorphism $\hat\beta_\bullet$ is subordinated to the condition that $\tilde\zeta$ has not zeros outside the critical line.
\end{lemma}
\begin{proof}
This follows directly from above definitions and results when one considers morphisms $\alpha=(\pi_*,\pi'_*):\hat\zeta\to\tilde\zeta$ and morphisms $\hat\beta=(\hat b,\hat b'):\tilde\zeta\to\hat\zeta$. This last is conditioned by the absence of zeros for $\tilde\zeta$, outside the critical line.
\end{proof}

With this respect let us introduce the following definitions.

\begin{definition}[Stability in $\hat\Omega_\bullet(f)$]\label{stability-b}
Let $\beta=(b,b')$, be a morphism with $b,\, b'\in Hom(\mathfrak{T})$, such that $\beta:f\to \hat f\to 0$, where $\hat f\in Hom_{\mathfrak{Q}_{\mathbb{C}}}(\hat X,\hat Y)$ is a closed map. We say that $\hat f$ is {\em $\beta$-stable} in $\hat\Omega_\bullet(f)$ if the following condition is satisfied.

{\em(i)} There exists a morphism $\alpha=(a,a')$, with $a,\, a'\in Hom(\mathfrak{T})$, such that $\alpha:\hat f\to f\to 0$.\footnote{Warning. Condition (i) is stronger than to state that $\hat f$ identifies a bordism class in $\hat\Omega_\bullet(f)$.}

Then we say also that $\hat f$ is {\em stable} in $\hat \Omega_\bullet(f)$, with respect to the {\em perturbation} $\beta$ of $f$.
\end{definition}
\begin{lemma}\label{stability-lemma-a}
If the map $\hat f\in Hom_{\mathfrak{Q}_{\mathbb{C}}}$ is $\beta$-stable in $\hat\Omega_\bullet(f)$ then there exists a nontrivial morphism $\gamma=(c,c')$, with $c,\, c'\in Hom(\mathfrak{T})$, such that diagram {\em(\ref{commutative-exact-diagram-stability-a})} is commutative and exact. (Here nontrivial means that $c\not=1_{\hat X}$ and $c'\not=1_{\hat Y}$.)
\end{lemma}

\begin{equation}\label{commutative-exact-diagram-stability-a}
\xymatrix@C=2cm{0&0\\
\hat X\ar[u]\ar[r]^{\hat f}&\hat Y\ar[u]\\
\hat X\ar[u]_{c}\ar[r]_{\hat f}&\hat Y\ar[u]^{c'}\\}
\end{equation}
\begin{proof}
In fact one has $\gamma=\beta\circ\alpha:\hat f\to\hat f\to 0$ in $\mathfrak{T}$.
\end{proof}

Let us now consider some lemmas relating above bordism groups with homotopy deformations.
\begin{lemma}\label{stability-lemma-c}
The maps $\tilde\zeta$ and $\hat\zeta$ are related by homotopic morphisms.
\end{lemma}
\begin{proof}
 In fact, let us consider inside the critical strip, say $\Xi$, another strip, say $\Xi_0\subset \Xi$, parallel to $\Xi$. Let $\phi_t:\mathbb{C}\to \mathbb{C}$ be an homotopy (flow) that deforms only $\Xi$ in such a way that its boundary remains fixed (as well as all $\mathbb{C}$ outside $\Xi$) but collapses $\Xi_0$ onto the critical line, say $\Xi_c\subset\Xi_0\subset\Xi$. If on the boundary of $\Xi_0$ there are four zeros of $\tilde\zeta$ outside $\Xi_c$, then all these zeros collapse on two points on $\Xi_c$. (These points do not necessitate to be zeros of $\tilde\zeta$. We call such points {\em ghost zeros}.) Then we can reproduce the process followed to build the covering $\hat b$, to identify a homotopy $\hat b_t$ such that all zeros and ghost zeros on $\Xi_c$ are projected on $\pm z_0$. In such a way we get the commutative and exact diagram (\ref{exact-commutative-diagram-homotopy-zeros})
\begin{equation}\label{exact-commutative-diagram-homotopy-zeros}
\xymatrix@C=1cm{
\mathbb{C}\ar@/_1pc/[rr]_{\hat b_t}\ar@/^4pc/[rrrrr]^{\hat b_1}\ar@/^2pc/[rrrr]^(0.8){\phi_1}\ar[d]_{\tilde \zeta}\ar[r]^{\phi_t}&\mathbb{C}\ar[r]^{\underline{\hat b}_t}&
  \hat S^1\ar[d]^{\hat\zeta_t}\ar[r]&\cdots\ar[r]&\mathbb{C}\ar[r]_{\underline{\hat b}_1}&\hat S^1\ar[d]^{\hat\zeta_1}\ar[r]&0\\
  \hat S^1\ar[d]\ar@/_2pc/[rrrrr]_{\hat b'_1}\ar[rr]^{\hat b'_t}&&\hat S^1\ar[d]\ar[r]&\cdots\ar[rr]&&
  \hat S^1\ar[d]\ar[r]&0\\
  0&&0&&&0&\\}
\end{equation}
that now works without excluding that $\tilde\zeta$ has zeros outside the critical line. In the commutative diagram (\ref{exact-commutative-diagram-homotopy-zeros}) we have denoted by $\underline{\hat b}_t:\mathbb{C}\to\hat S^1$, $t\in[0,1]$, the mappings that project on the two zeros $\pm z_0$ of $\hat\zeta$, all zeros of $\tilde\zeta$ on the critical line $\Xi_c$, and also the ghost zeros arrived on $\Xi_c$ at the homotopy time $t\in[0,1]$. The mappings $\underline{\hat b}_t$ work similarly to $\hat b:\mathbb{C}\to\hat S^1$, i.e., the zeros-Riemann-zeta-normalized-universal-covering of $\hat S^1$. (For abuse of notation we could denote them yet by $\hat b$. However, in order to avoid any possible confusion, we have preferred to adopt a different symbol, emphasizing its relation to the flow $\phi_t$ and the ghost zeros at the homotopy time $t$.) With this respect we shall call {\em $t$-ghost zeros} the ghost zeros that at the time $t$ arrive on $\Xi_c$. Set $\hat\beta_t=(\hat b_t,\hat b'_t)$. Furthermore, $\hat\zeta_t$  denotes the pointed meromorphic function $\hat\zeta_t:\hat S^1\to \hat S^1$ that can have zeros outside the critical line, and hence a pole at $\infty$ with the multiplicity equal to the numbers of such zeros. For example whether such number is four then the divisor of $\hat\zeta_t$, $\forall t\in[0,1]$, is given in (\ref{divisor-homotopy}).\footnote{Warning. The assumption that the number of zeros of $\zeta(s)$, outside the critical line, is finite is not restrictive. In fact in Appendix D we show how one arrives to the same conclusions by assuming infinite such a set. There it is given also an explicit characterization of the continuous, singular flow $\phi_t:\mathbb{C}\to\mathbb{C}$, $t\in[0,1]$.}
\begin{equation}\label{divisor-homotopy}
\left\{\begin{array}{ll}
  (\hat\zeta_t)=&1\cdot s_{1,t}+1\cdot s_{2,t}+1\cdot s_{3,t}+1\cdot s_{4,t}+1\cdot q_{z_0}+1\cdot q_{-z_0}-1\cdot q_{0}-1\cdot q_{1}\\
  &-4\cdot q_{\infty},\, t\in[0,1),\\
  (\hat\zeta)=&(\hat\zeta_1)=1\cdot q_{z_0}+1\cdot q_{-z_0}-1\cdot q_{0}-1\cdot q_{1}.\\
    \end{array}\right.
\end{equation}
where $s_{i,t}$, $i=1,2,3,4$, denote the zeros at $t$, different from $\pm z_0$, on $\hat S^1$. Note the exactness of the short sequence (\ref{short-exact-sequences}), for any $t\in[0,1]$.  In particular one has $\hat\zeta_1=\hat\zeta$.

\begin{equation}\label{short-exact-sequences}
\xymatrix{\tilde\zeta\ar[r]^{\hat\beta_t}&\hat\zeta_t\ar[r]&0\\}
\end{equation}

In particular we get the exactness of the sequence $\xymatrix{\tilde\zeta\ar[r]^{\hat\beta_1}&\hat\zeta\ar[r]&0\\}$.
By using Lemma \ref{relations-bordism-groups} we can see that $\hat\Omega_\bullet(\hat\zeta)\cong\hat\Omega_\bullet(\tilde\zeta)$.\footnote{Warning. The morphisms $\hat\beta_t$ identify $\hat\zeta$ as an homotopy deformation of $\tilde\zeta$. This should not be enough to state that $\hat\Omega_\bullet(\hat\zeta)$ is isomorphic to $\hat\Omega_\bullet(\tilde\zeta)$. In fact, R. E. Stong has shown with a counterexample that homotopic maps do not necessitate have isomorphic map bordism groups.\cite{STONG} However in the case of the maps $\tilde\zeta$ and $\hat\zeta$ this result does not apply thanks to the fibration structures of the involved maps. See Appendix E for a detailed proof of the isomorphism $\hat\Omega_\bullet(\hat\zeta)\cong\hat\Omega_\bullet(\tilde\zeta)$ existence.} Let us emphasize that, since $\hat\zeta$ is a closed quantum-complex map, it identifies a bordism in $\hat\Omega_\bullet(\hat\zeta)$ by means of the canonical morphism $1=(1,1):\hat\zeta\to\hat\zeta$, i.e., the commutative diagram (\ref{commutative-diagram-identity-morphism}).
\begin{equation}\label{commutative-diagram-identity-morphism}
   \xymatrix{\hat S^1\ar[r]^{\hat\zeta}&\hat S^1\\
   \hat S^1\ar[u]^{1}\ar[r]_{\hat\zeta}&\hat S^1\ar[u]_{1}\\}
\end{equation}

Then taking into account the isomorphism $\hat\Omega_\bullet(\hat\zeta)\cong\hat\Omega_\bullet(\tilde\zeta)$, it follows that $\hat\zeta$ identifies a bordism class in $\hat\Omega_\bullet(\tilde\zeta)$. Therefore, must exist a morphism $\delta=(\epsilon,\epsilon'):\hat\zeta\to\tilde\zeta$, namely must exist a commutative diagram (\ref{commutative-diagram-existence}).
\begin{equation}\label{commutative-diagram-existence}
   \xymatrix{\mathbb{C}\ar[r]^{\tilde\zeta}&\hat S^1\\
   \hat S^1\ar[u]^{\epsilon}\ar[r]_{\hat\zeta}&\hat S^1\ar[u]_{\epsilon'}\\}
\end{equation}
with $\epsilon\in Hom_{\mathfrak{T}}(\hat S^1,\mathbb{C})$ and $\epsilon'\in Hom_{\mathfrak{T}}(\hat S^1,\hat S^1)$.
From Lemma \ref{bordism-morphism-and-quantum-riemann-zeta-function} we know that such a morphism $\delta$ is just $\delta=(\epsilon,\epsilon')=(\pi_*,\pi'_*)=\alpha$. Thus we get a surjective morphism $\xymatrix{\hat\zeta\ar[r]^{\alpha}&\tilde\zeta\ar[r]&0\\}$. This composed with $\hat\beta_1$ allows us to state that $\hat\zeta$ is $\hat\beta_1=(\hat b_1,\hat b'_1)$-stable in $\hat\Omega_\bullet(\tilde\zeta)$, in the sense of Definition \ref{stability-b}. In fact one has the exact commutative diagram of maps and morphisms given in (\ref{exact-commutative-diagram-maps-morphisms-stability-a}), i.e., $\omega=\hat\beta_1\circ\alpha:\hat\zeta\to\hat\zeta$ is a non trivial surjective morphism.
\begin{equation}\label{exact-commutative-diagram-maps-morphisms-stability-a}
  \xymatrix{\hat\zeta\ar[dr]_{\omega}\ar[r]^{\alpha}&\tilde\zeta\ar[d]^{\hat\beta_1}\ar[r]&0\\
  &\hat\zeta\ar[d]&\\
  &0&\\}
\end{equation}
\end{proof}
\begin{definition}\label{stability-lemma-definition-a}
We say that a morphism $\alpha:\tilde\zeta\to\hat f$ is a {\em perturbation of $\tilde\zeta$ inside the bordism group} $\hat\Omega_\bullet(\tilde\zeta)$, if $\hat f$ identifies a bordism class of $\hat\Omega_\bullet(\tilde\zeta)$.\footnote{This implies that $\hat f$ is a closed map of $\mathfrak{Q}_{\mathbb{C}}$ and there is a morphism  of $\mathfrak{T}$, $\beta:\hat f\to \tilde\zeta$.}
\end{definition}
\begin{definition}\label{stability-lemma-definition-b}
We say that a morphism $\alpha:\tilde\zeta\to\hat f$ is a {\em perturbation of $\tilde\zeta$ outside the bordism group} $\hat\Omega_\bullet(\tilde\zeta)$, if $\hat f$ does not identify a bordism class of $\hat\Omega_\bullet(\tilde\zeta)$.
\end{definition}

\begin{lemma}\label{stability-lemma-d}
When one assumes that $\tilde\zeta$ has a finite set of zeros outside the critical line, the perturbations $\hat\beta_t:\tilde\zeta\to\hat\zeta_t$, $t\in[0,1]$, of $\tilde\zeta$ are inside $\hat\Omega_\bullet(\tilde\zeta)$. Furthermore, one has the canonical isomorphisms reported in {\em(\ref{isomorphisms-bordism-groups-related-toriemann-zeta-function})}.
\begin{equation}\label{isomorphisms-bordism-groups-related-toriemann-zeta-function}
    \hat\Omega_\bullet(\tilde\zeta)\cong\hat\Omega_\bullet(\hat\zeta)\cong\hat\Omega_\bullet(\hat\zeta_t),\, \forall t\in(0,1).
\end{equation}
One can identify the canonical bordism classes of $\hat\Omega_\bullet(\hat\zeta)$ and $\hat\Omega_\bullet(\hat\zeta_t)$ respectively, namely one has the one-to-one correspondence in the above isomorphism {\em(\ref{isomorphisms-bordism-groups-related-toriemann-zeta-function})}, $[\hat\zeta]\leftrightarrow[\hat\zeta_t]$, $\forall t\in(0,1]$.

Furthermore, the maps $\hat\zeta_t$, $\forall t\in(0,1]$, are $\hat\beta_t$-stable in $\hat\Omega_\bullet(\tilde\zeta)$.
\end{lemma}
\begin{proof}
Let us assume that $\tilde\zeta$ has a finite set of zero outside the critical line. Then the perturbations $\hat\beta_t:\tilde\zeta\to\hat \zeta_t$ of $\tilde\zeta$, considered in the commutative diagram (\ref{exact-commutative-diagram-homotopy-zeros}), are inside $\hat\Omega_\bullet(\tilde\zeta)$. In fact, each map $\hat\zeta_t:\hat S^1\to\hat S^1$ is a closed map in $\mathfrak{Q}_{\mathbb{C}}$, hence it identifies a bordism class in $\hat\Omega_\bullet(\tilde\zeta_t)$. Furthermore, by following a similar road used to prove the isomorphism considered in Appendix E, we can prove that the homomorphism $\hat\Omega_\bullet(\hat\beta_t):\hat\Omega_\bullet(\tilde\zeta)\to \hat\Omega_\bullet(\hat\zeta_t)$ is an isomorphism. Therefore, $[\hat\zeta_t]\in \hat\Omega_\bullet(\hat\zeta_t)$ can be identified by an equivalence class of $\hat\Omega_\bullet(\tilde\zeta)$. Thus it must exist a morphism $\hat\gamma_t=(e_t,e'_t):\hat\zeta_t\to\tilde\zeta$ such that diagram (\ref{commutative-diagram-perturbed-morphisms-and-bordisms}) is commutative.
\begin{equation}\label{commutative-diagram-perturbed-morphisms-and-bordisms}
    \xymatrix{\mathbb{C}\ar[r]^{\tilde\zeta}&\hat S^1\\
    \hat S^1\ar[u]^{e_t}\ar[r]_{\hat\zeta_t}&\hat S^1\ar[u]_{e'_t}\\}
\end{equation}
Really we can take $\hat\gamma_t=\alpha\circ\hat\beta_{t,1}$, where $\hat\beta_{t,1}:\hat\zeta_t\to\hat\zeta_1=\hat\zeta$ is identified in diagram (\ref{exact-commutative-diagram-homotopy-zeros}) and $\alpha=(\pi_*,\pi'_*):\hat\zeta\to\tilde\zeta$. Since all the maps involved are surjective, one has the exact commutative diagram (\ref{exact-commutative-diagram-maps-morphisms-stability-b}), with $\nu=\hat\beta_t\circ\hat\gamma_t$.
\begin{equation}\label{exact-commutative-diagram-maps-morphisms-stability-b}
    \xymatrix{\hat\zeta_t\ar[dr]_{\nu}\ar[r]^{\hat\gamma_t}&\tilde\zeta\ar[d]^{\hat\beta_t}\ar[r]&0\\
    &\hat\zeta_t\ar[d]&\\
    &0&}
\end{equation}
Therefore, $\hat\zeta_t$ is $\hat\beta_t$-stable in $\hat\Omega_\bullet(\tilde\zeta)$.
Finally, by considering the isomorphism $\hat\Omega_\bullet(\hat\zeta)\cong \hat\Omega_\bullet(\hat\zeta_t)$, one can identify $[\hat\zeta]\in\hat\Omega_\bullet(\hat\zeta)$ with $[\hat\zeta_t]\in\hat\Omega_\bullet(\hat\zeta_t)$. In fact, by following a proceeding  similar to one considered in Appendix E, we can see that if $\hat f:M\to N$ is a closed map in $\mathfrak{Q}_{\mathbb{C}}$, equivalent to $\hat\zeta_t$ in $\hat\Omega_\bullet(\hat\zeta_t)$, then it is equivalent to $\hat\zeta$ in $\hat\Omega_\bullet(\hat\zeta)$ too.
\end{proof}
\begin{lemma}\label{stability-lemma-e}
 When one assumes that $\tilde\zeta$ has an infinite set of zeros outside the critical line, (see Appendix E), the perturbations $\tilde\beta_t:\tilde\zeta\to\tilde\zeta_t$, $\forall t\in(0,1)$, of $\tilde\zeta$ are outside $\hat\Omega_\bullet(\tilde\zeta)$.
\end{lemma}
\begin{proof}
The perturbations $\tilde\beta_t:\tilde\zeta\to\tilde\zeta_t$ considered in Appendix E, when one assumes that $\tilde\zeta$ has an infinite set of zeros outside the critical line, are outside the bordism group $\hat\Omega_\bullet(\tilde\zeta)$, since the maps $\tilde\zeta_t:\mathbb{C}\to\mathbb{C}$ are non-compact and non-closed, hence they cannot identify bordism equivalence classes in $\hat\Omega_\bullet(\tilde\zeta)$.
\end{proof}

\begin{lemma}\label{stability-lemma-b}
To state that the Riemann hypothesis is true is equivalent to say that the quantum-complex zeta Riemann function $\hat\zeta$ is $\hat\beta$-stable in the bordism group $\hat\Omega_\bullet(\tilde\zeta)$, where $\hat\beta=(\hat b,\hat b'):\tilde\zeta\to\hat\zeta$ is a perturbation of $\tilde\zeta$ inside $\hat\Omega_\bullet(\tilde\zeta)$.
\end{lemma}
\begin{proof}
The quantum-complex zeta Riemann function $\hat\zeta$ is $\hat\beta=(\hat b,\hat b')$-stable in $\hat\Omega_\bullet(\tilde\zeta)$ iff $\tilde\zeta$ has not zeros outside the critical line. In fact one has $\gamma=\hat\beta\circ\alpha:\hat \zeta\to\hat \zeta\to 0$ with $\alpha=(\pi_*,\pi'_\star)$ (see diagram (\ref{commutative-diagram-morphism-completed-riemann-zeta-function})) and $\hat\beta=(\hat b,\hat b')$ (see diagram (\ref{commutative-diagram-quantum-zeta-riemann-function-a})). But $\gamma$ is surjective iff $\hat\beta$ is surjective, i.e., iff $\tilde\zeta$ has  not zeros outside the critical line (see Remark \ref{remark-zeros-outside-critical-line}).
Therefore to state that the Riemann hypothesis is true is equivalent to say that the quantum-complex zeta Riemann function $\hat\zeta$ is $\hat\beta$-stable in the bordism group $\hat\Omega_\bullet(\tilde\zeta)$ in the sense of Definition \ref{stability-b}, hence the perturbation $\hat\beta$ is inside $\hat\Omega_\bullet(\tilde\zeta)$, (see Lemma \ref{bordism-morphism-and-quantum-riemann-zeta-function} and Definition \ref{stability-lemma-definition-a}).
\end{proof}

\begin{definition}\label{definition-set-ghost-zeros}
$\bullet$\hskip 2pt Let us denote by $Z_c(\tilde\zeta)$ the set of zeros of $\tilde\zeta$ in the critical line. We call $Z_c(\tilde\zeta)$ the {\em set of critical zeros} of $\tilde\zeta$.

$\bullet$\hskip 2pt Let us denote by $Z_0(\tilde\zeta)$ the set of zeros of $\tilde\zeta$ that are outside the critical line. Let us call $Z_0(\tilde\zeta)$ the {\em set of outside-ghost zeros} of $\tilde\zeta$.

$\bullet$\hskip 2pt We denote by $Z_{c,t}(\tilde\zeta)$ the {\em set of $t$-ghost zeros} of $\tilde\zeta$, i.e., the set of ghost zeros that at the homotopy time $t\in[0,1]$ are on the critical line $\Xi_c$. (For $t=0$, $Z_{c,t}(\tilde\zeta)$ is empty: $Z_{c,0}(\tilde\zeta)=\varnothing$.)
\end{definition}

In Tab. \ref{alternative-possible-properties-of-non-critical-zeros-of-completed-riemann-zeta-function} are resumed the possible characterizations of $Z_0(\tilde\zeta)$.\footnote{There propositions \textbf{(a)}, \textbf{(b)} and \textbf{(c)} are two by two contradictory propositions. But the law of excluded middle does not work for them. This is instead an example of $3$-valued logics and to these it applies the law of excluded $4$-th.}

\begin{table}[t]
\caption{Possible and alternative propositions on the set $Z_0(\tilde\zeta)$ of outside-ghost zeros of $\tilde\zeta$.}
\label{alternative-possible-properties-of-non-critical-zeros-of-completed-riemann-zeta-function}
\begin{tabular}{|c|l|l|l|}
\hline
\hfil{\rm{\footnotesize Quotation}}\hfil&\hfil{\rm{\footnotesize Proposition}}\hfil&\hfil{\rm{\footnotesize $\hat\Omega_\bullet(\hat\zeta)$}}\hfil&\hfil{\rm{\footnotesize Stability of $\hat\zeta$ in $\hat\Omega_\bullet(\tilde\zeta)$}}\hfil\\
\hline
\hfil{\rm{\footnotesize $\mathbf{(a)}$}}\hfil&\hfil{\rm{\footnotesize $Z_0(\tilde\zeta)=\varnothing$}}\hfill&\hfil{\rm{\footnotesize $\hat\Omega_\bullet(\hat\zeta)\cong\hat\Omega_\bullet(\tilde\zeta)$}}\hfil&\hfil{\rm{\footnotesize $\hat\beta$-stable, $\hat\beta=(\hat b,\hat b')$, (see diagram (\ref{commutative-diagram-quantum-zeta-riemann-function-a}).}}\hfill\\
\hline
\hfil{\rm{\footnotesize $\mathbf{(b)}$}}\hfil&\hfil{\rm{\footnotesize $\sharp(Z_0(\tilde\zeta))<\aleph_0$}}\hfill&\hfil{\rm{\footnotesize $\hat\Omega_\bullet(\hat\zeta)\cong\hat\Omega_\bullet(\tilde\zeta)$}}\hfil&\hfil{\rm{\footnotesize $\hat\beta_1$-stable, $\hat\beta_1=(\hat b_1,\hat b'_1)$, (see diagram (\ref{exact-commutative-diagram-homotopy-zeros}).}}\hfill\\
\hline
\hfil{\rm{\footnotesize $\mathbf{(c)}$}}\hfil&\hfil{\rm{\footnotesize $\sharp(Z_0(\tilde\zeta))=\aleph_0$}}\hfill&\hfil{\rm{\footnotesize $\hat\Omega_\bullet(\hat\zeta)\cong\hat\Omega_\bullet(\tilde\zeta)$}}\hfil&\hfil{\rm{\footnotesize $\hat\beta_1$-stable, $\hat\beta_1=(\hat b_1,\hat b'_1)$, (see diagram (\ref{appendix-d-exact-commutative-diagram-infinite-divisor-guess-c}).}}\hfill\\
\hline
\multicolumn{4}{l}{\rm{\footnotesize $Z_0(\tilde\zeta)$: See Definition \ref{definition-set-ghost-zeros}.}}\hfill\\
\end{tabular}
\end{table}
From the proof of the isomorphism $\hat\Omega_\bullet(\tilde\zeta)\cong \hat\Omega_\bullet(\hat\zeta)$  and from Lemma \ref{relations-bordism-groups} it follows that also $\hat\Omega_\bullet(\hat\beta)$ is an isomorphism. In fact, with respect to the quotation reported in Tab. \ref{alternative-possible-properties-of-non-critical-zeros-of-completed-riemann-zeta-function}, let us consider first the case \textbf{(b)},\footnote{The conclusion in the case \textbf{a} is obvious.} i.e., let us assume that there is a finite set of zeros of $\tilde\zeta$, outside the critical line. Then the homotopy relation between $\tilde\zeta$ and $\hat\zeta$ allows us to relate these last maps with the commutative and exact diagram (\ref{commutative-diagram-identification})(A) with the continuous maps $\hat b_1:\mathbb{C}\to \hat S^1$, $\hat b'_1:\hat S^1\to\hat S^1$ (say the {\em dot maps}). (Compare with the commutative diagram (\ref{exact-commutative-diagram-homotopy-zeros}).) This diagram allows us to understand that $\hat\zeta$ is $\hat\beta_1$-stable. Let us now consider the case \textbf{(c)}, i.e., let us assume that there is an infinite set of zeros of $\tilde\zeta$, outside the critical line. Then from the commutative diagram (\ref{appendix-d-exact-commutative-diagram-infinite-divisor-guess-c}) one can build the commutative diagram (\ref{commutative-diagram-identification})(B)
with the continuous maps $\hat b_1:\mathbb{C}\to\hat S^1$ and $\hat b'_1:\hat S^1\to\hat S^1$ (say again the {\em dot maps}). This diagram allows us to understand again that $\hat\zeta$ is $\hat\beta_1=(\hat b_1,\hat b'_1)$-stable.
Therefore, the unique obstruction to the condition $Z_0(\tilde\zeta)=\varnothing$ is the absence of $\hat\beta=(\hat b,\hat b')$-stability for $\hat\zeta$ in the bordism group $\hat\Omega_\bullet(\tilde\zeta)$.

\begin{equation}\label{commutative-diagram-identification}
    \, {\rm(A)}\, \xymatrix{&\mathbb{C}\ar[dr]_(0.3){\underline{\hat b}_1}&&\\
    \mathbb{C}\ar@/^4pc/[rr]^{\hat b_1}\ar[ur]_(0.7){\phi_1}\ar@{.>}[r]^{\hat b_1}\ar[d]_{\tilde\zeta}&\hat S^1\ar@{=}[r]\ar[d]_{\hat\zeta}&\hat S^1\ar[d]^{\hat\zeta_1}\ar[r]&0\\
    \hat S^1\ar@/_4pc/[rr]_{\hat b'_1}\ar[d]\ar@{.>}[r]_{\hat b'_1}&\hat S^1\ar[d]\ar@{=}[r]&\hat S^1\ar[d]\ar[r]&0\\
    0&0&0&\\}
    \, {\rm(B)}\,
     \scalebox{0.9}{$\xymatrix{&&&&\mathbb{C}\ar[dddd]^{\tilde\zeta_1}\ar[dll]_{\underline{\hat b}_1}\ar[r]&0\\
     \mathbb{C}\ar@/^3pc/[urrrr]^{\phi_1}\ar@{.>}[rr]^{\hat b_1}\ar[dd]_{\tilde\zeta}&&\hat S^1\ar[dl]\ar[dd]^{\hat\zeta}\ar[r]&0&&\\
     &0&&&&\\
    \hat S^1\ar@/_3pc/[drrrr]_{\phi'_1}\ar[d]\ar@{.>}[rr]_{\hat b'_1}&&\hat S^1\ar[lu]\ar[d]\ar[r]&0&&\\
    0&&0&&\hat S^1\ar[ull]^{\underline{\hat b}'_1}\ar[d]\ar[r]&0\\
    &&&&0&\\}$}
     \end{equation}
On the other hand since the isomorphism $\hat\Omega_\bullet(\hat\zeta)\cong\hat\Omega_\bullet(\tilde\zeta)$ holds independently on which of the propositions \textbf{(a)}, \textbf{(b)} and \textbf{(c)}  in Tab. \ref{alternative-possible-properties-of-non-critical-zeros-of-completed-riemann-zeta-function} is true, we can assume such isomorphism without referring to a specific of such propositions. Then, since the continuous mapping $\hat b:\mathbb{C}\to\hat S^1$ is canonical and independent from above propositions, we can state that to any map $\hat f\in Hom_{\mathfrak{Q}_{\mathbb{C}}}(M,N)$, identifying a bordism element in $\hat\Omega_\bullet(\tilde\zeta)$, we must be able to associate by means of the canonical mapping $\hat b$, a bordism element of $\hat\Omega_\bullet(\hat\zeta)$. In other words, we must be able to associate to $\hat f$ and $\hat b$ a morphism $\hat\beta=(\hat b,\hat b')$ sending $\hat f$ to $\hat\zeta$. This means that we can associate to $\hat b$ a continuous map $\hat b'\in Hom_{\mathfrak{T}}(\hat S^1,\hat S^1)$ such that diagram (\ref{concluding-commutative-diagram}) is commutative (and exact).
But this just means that it must exist also the isomorphism $\hat\beta_\bullet:\hat\Omega_\bullet(\tilde\zeta)\cong\hat\Omega_\bullet(\hat\zeta)$, (according to Lemma \ref{relations-bordism-groups}), and that $\hat\zeta$ is $\hat\beta$-stable. Therefore from Lemma \ref{stability-lemma-b} we get that proposition \textbf{(a)}, $Z_0(\tilde\zeta)=\varnothing$, in Tab. \ref{alternative-possible-properties-of-non-critical-zeros-of-completed-riemann-zeta-function}, is the unique true proposition. Then from Lemma \ref{complted-riemann-zeta-function} it follows that also the Riemann zeta function $\zeta$ cannot have non-trivial zeros outside the critical line. The proof of Theorem \ref{main-proof} is now complete.
\begin{equation}\label{concluding-commutative-diagram}
   \scalebox{0.8}{$\xymatrix{M\ar[ddd]_{\hat f}\ar[rd]^{a}\ar@/^2pc/[drr]^{b}&&&\\
    &\mathbb{C}\ar[r]^{\hat b}\ar[d]_{\tilde\zeta}&\hat S^1\ar[d]^{\hat\zeta}\ar[r]&0\\
   &\hat S^1\ar@{.>}[r]^{\hat b'}\ar[d]&\hat S^1\ar[d]\ar[r]&0\\
    N\ar@/_2pc/[urr]_{b'}\ar[ur]_{a'}&0&0&\\}$}
\end{equation}
\end{proof}

\begin{appendices}
\appendix{\bf Appendix A: Proof that $\mathbf{\hat\zeta(s):\hat S^1\to\hat S^1}$ is a meromorphic function with two zeros and two simple poles at finite.}\label{appendixa}
\renewcommand{\theequation}{A.\arabic{equation}}
\setcounter{equation}{0}  

In this appendix we shall give an explicit constructive proof that the meromorphic completed Riemann function $\tilde\zeta:\mathbb{C}\to\mathbb{C}$ identifies a meromorphic function $\hat\zeta:\hat S^1\to\hat S^1$ having two zeros and two simple poles at finite.

Let us denote by $Z(\tilde\zeta)\subset \mathbb{C}$ and $Pol(\tilde\zeta)\subset\mathbb{C}$ respectively the set of zeros and poles of $\tilde\zeta$. $\tilde\zeta$ identifies a canonical mapping, yet denoted by $\tilde\zeta:\mathbb{C}\to\hat S^1=\mathbb{C}\bigcup\{\infty\}$, that is a $\mathbb{C}$-quantum mapping, (i.e., a holomorphic mapping) relative to the structure of $\mathbb{C}$-quantum manifold of both $\mathbb{C}$ and $\hat S^1$. One has $Pol(\tilde\zeta)=\tilde\zeta^{-1}(\infty)$, and $Z(\tilde\zeta)=\tilde\zeta^{-1}(0)$, if $0\in\mathbb{C}$. Let us denote by $(\tilde\zeta)_p$ the degree of $\tilde\zeta$ at the point $p\in \mathbb{C}$. Then one has
\begin{equation}\label{appendix-a-degree-at-point}
(\tilde\zeta)_p=\left\{\begin{array}{ll}
                          0&,\, p\not\in Z(\tilde\zeta)\bigcup Pol(\tilde\zeta)\\
                          +1&,\, p\in Z(\tilde\zeta)\\
                          -1&,\, p\in Pol(\tilde\zeta)\
                        \end{array}\right.
\end{equation}

We shall consider, now, some lemmas.
\begin{alemma}[Unique presentation by poles]\label{appendix-a-lemma-a}
Let $f:\hat S^1\to\hat S^1$ be a meromorphic mapping, i.e., $f$ is not a constant map $f(z)=\infty$, $\forall z\in \hat S^1$. Then $f$ is uniquely expressible as
\begin{equation}\label{appendix-a-meromorphic-presentation-poles}
    f(z)=p(z)+\sum_{i,j}\frac{c_{ij}}{(z-p_i)^j}
\end{equation}
where $p(z)$ is a polynomial, the $c_{ij}$ are constants and $p_i\in Pol(f|_{\mathbb{C}})$, i.e., $p_i$ are the finite poles of $f$. Since $\sharp(Pol(f))<\aleph_0$, the above sum is finite.
\end{alemma}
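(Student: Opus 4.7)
The plan is to reduce the statement to the classical fact that every meromorphic function on the Riemann sphere is rational, then to obtain the explicit partial-fraction decomposition. The key structural input I would use is that $\hat S^1$ is compact, so the pole set $Pol(f|_{\mathbb{C}})=\{p_1,\dots,p_N\}$ is finite (poles are isolated in a compact space).

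First I would construct the principal parts. At each finite pole $p_i$ of order $n_i$, the Laurent expansion of $f$ in a punctured disk around $p_i$ has the form $f(z)=\sum_{j=1}^{n_i}\frac{c_{ij}}{(z-p_i)^j}+\phi_i(z)$, with $\phi_i$ holomorphic near $p_i$. Define
\begin{equation*}
R(z)=\sum_{i=1}^{N}\sum_{j=1}^{n_i}\frac{c_{ij}}{(z-p_i)^j},\qquad g(z)=f(z)-R(z).
\end{equation*}
Then $g$ is holomorphic on $\mathbb{C}$, because each singularity of $f$ at $p_i$ is cancelled by the corresponding principal part of $R$ (and $R$ is holomorphic away from the $p_i$'s).

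Next I would analyze $g$ at $\infty$. Since $f$ is meromorphic on $\hat S^1$, it has at worst a pole at $\infty$, and $R$ is holomorphic at $\infty$ (it actually vanishes there), so $g$ has at worst a pole at $\infty$. Hence $g$ is an entire function on $\mathbb{C}$ which either is bounded near $\infty$ or grows at most polynomially: in the chart $w=1/z$ around $\infty$, $g(1/w)$ has at most a pole of finite order at $w=0$, which forces $|g(z)|\le C(1+|z|)^M$ for some $M\in\mathbb{N}$. A standard Cauchy-estimate argument (or Liouville applied to the derivatives) then shows that $g$ must be a polynomial $p(z)$, so $f=p+R$, which is the required form.

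For uniqueness, suppose $f=p_1(z)+R_1(z)=p_2(z)+R_2(z)$ with $R_i$ sums of principal parts at the finite poles. Then $p_1-p_2=R_2-R_1$; the left-hand side is an entire function, while the right-hand side is a sum of terms $c/(z-p_i)^j$. If any coefficient on the right were nonzero, the right-hand side would have a pole at some $p_i$, contradicting the entireness of the left-hand side. So $R_1=R_2$, and then $p_1=p_2$. Equivalently, the coefficients $c_{ij}$ are forced to coincide with the Laurent coefficients of $f$ at the $p_i$, which are intrinsic. The only subtlety worth flagging is the polynomial growth step, which is routine but the single analytical ingredient in the proof; everything else is bookkeeping on Laurent expansions.
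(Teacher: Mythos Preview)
Your proof is correct and follows essentially the same route as the paper: subtract the sum of principal parts at the finite poles to obtain an entire function with at worst a pole at $\infty$, then conclude it is a polynomial. The only minor differences are that the paper finishes by invoking the lemma ``holomorphic on a compact Riemann surface $\Rightarrow$ constant'' (applied to $g$ minus its principal part at $\infty$) rather than Cauchy estimates, and that you supply an explicit uniqueness argument which the paper omits.
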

\begin{proof}
Even if this and some other lemma in this proof are standard, we will sketch a proof to better clarify the proof of Lemma~\ref{quantum-complex-zeta-riemann-function} and Theorem~\ref{main-proof}. Since $f$ is meromorphic, it admits a convergent Laurent expression near a pole $p$.
\begin{equation}\label{appendix-a-laurent-expression-near-a-pole}
    \begin{array}{l}
    a_{-n}(z-p)^{-n}+a_{-n+1}(z-p)^{-n+1}+\cdots+ a_{-1}(z-p)^{-1}+\sum_{k\ge 0}a_k(z-p)^k \\
    =f_p(z)+\sum_{k\ge 0}a_k(z-p)^k. \\
  \end{array}
\end{equation}

The negative powers form the {\em principal part} $f_p(z)$ of the series. The poles are isolated and the compactness of $\hat S^1$ implies that $\sharp(Pol(f))<\aleph_0$. Set $\psi(z)=f(z)-\sum_{p_i\in Pol(f)} f_{p_i}$. Then $\psi(z)$ is a meromorphic function with poles only at $\infty$. Let us consider the representation of $\psi(z)$ in the open neighborhood of $\infty\in\hat S^1$, i.e., in the standard chart $(\hat S^1\setminus\{0\},\varphi(z)=\frac{1}{w})$. We get
\begin{equation}\label{appendix-a-representation-of-psi}
\begin{array}{ll}
\psi(w)&=a_{-n}w^{-n}+a_{-n+1}w^{-n+1}+\cdots+ a_{-1}w^{-1}+\sum_{k\ge 0}a_kw^k\\
&=\psi_0(w)+\sum_{k\ge 0}a_kw^k.\\
 \end{array}
\end{equation}

Then $\psi(w)-\psi_0(w)=\sum_{k\ge 0}a_kw^k$ is a holomorphic function without poles on $\hat S^1$. We can use the following lemmas.
\begin{alemma}\label{appendix-a-lemma-b}
Every holomorphic function defined everywhere on a compact connected Riemann surface is constant.
\end{alemma}
\begin{proof}
This is standard.
\end{proof}
Therefore we can conclude that $\psi(w)-\psi_0(w)=C\in\mathbb{C}$.  By conclusion $\psi(z)$ is a polynomial in the neighborhood of $\infty$: $\psi(z)=a_{-n}z^{n}+a_{-n+1}z^{n-1}+\cdots+ a_{-1}z+C$.
\end{proof}

\begin{alemma}[Unique presentation by zeros and poles]\label{appendix-a-lemma-c}
A meromorphic function $f:\hat S^1\to\hat S^1$ has a unique expression
\begin{equation}\label{appendix-a-equation-appendix-lemma-c}
f(z)=c\, \frac{\prod_{1\le i\le n}(z-z_i)^{h_i}}{\prod_{1\le j\le m}(z-p_j)^{k_j}},\, c\in \mathbb{C},\, z_i\in Z(f),\, p_j\in Pol(f).
\end{equation}

Here $h_i$ and $k_j$ are the multiplicities of zero and poles at finite.
Furthermore one has $\sharp(Z(f))=\sharp(Pol(f))$.
\end{alemma}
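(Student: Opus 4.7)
The plan is to reduce the statement to Lemma A\hskip-0.5pt\ref{appendix-a-lemma-b} (constancy of global holomorphic functions on compact Riemann surfaces) by dividing $f$ by an explicit rational function that carries all of its finite zeros and poles. Excluding the trivial case $f\equiv 0$ (which has no representation of the proposed form), I would first enumerate the finite zeros $z_1,\ldots,z_n\in Z(f)\cap\mathbb{C}$ with multiplicities $h_i$ and the finite poles $p_1,\ldots,p_m\in Pol(f)\cap\mathbb{C}$ with multiplicities $k_j$; compactness of $\hat S^1$ together with the isolatedness of zeros and poles of a non-identically-zero meromorphic function forces both lists to be finite. Form
\[
   g(z)=\frac{\prod_{1\le i\le n}(z-z_i)^{h_i}}{\prod_{1\le j\le m}(z-p_j)^{k_j}}
\]
and define $h(z)=f(z)/g(z)$. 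At each $z_i$ (respectively $p_j$) numerator and denominator vanish (respectively blow up) to exactly the same order, so every finite singularity of $h$ is removable and $h$ is holomorphic and nowhere-vanishing on $\mathbb{C}$.

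Next I would handle the point $\infty$. Since $h$ has no finite zeros or poles, the only possible singularity of $h$ on $\hat S^1$ is at $\infty$. If $h$ had a pole of order $N\ge 1$ there, then by Lemma A\hskip-0.5pt\ref{appendix-a-lemma-a} applied to $h$, the function $h$ would be a polynomial in $z$ of degree $N$, but the fundamental theorem of algebra would then produce zeros of $h$ in $\mathbb{C}$, contradicting $Z(h)\cap\mathbb{C}=\varnothing$. Symmetrically, a zero of order $N\ge 1$ at $\infty$ would make $1/h$ a non-constant polynomial without finite zeros, the same contradiction. Hence $h$ is globally holomorphic on the compact connected Riemann surface $\hat S^1$ and Lemma A\hskip-0.5pt\ref{appendix-a-lemma-b} forces $h\equiv c$ for some $c\in\mathbb{C}\setminus\{0\}$, yielding the factorization $f(z)=c\, g(z)$.

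For uniqueness, any second representation of the same form would give, after cross-multiplication, an identity of two products of linear factors; unique factorization in $\mathbb{C}[z]$ then pins down the multisets $\{(z_i,h_i)\}$ and $\{(p_j,k_j)\}$, and equating leading coefficients fixes the scalar $c$. For the equality $\sharp(Z(f))=\sharp(Pol(f))$, counted with multiplicity and including the point $\infty$ whenever it is a zero or pole, I would invoke Lemma \ref{divisor-degree}: the degree of any principal divisor on a compact Riemann surface vanishes. Concretely, expanding $c\, g(z)$ at $\infty$ in the local coordinate $w=1/z$ reads off the order of $f$ at $\infty$ as $\sum_j k_j-\sum_i h_i$, so a pole (respectively, zero) of $f$ at $\infty$ carries exactly the multiplicity required to balance the deficit (respectively, excess) of finite zeros against finite poles.

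The main obstacle is not conceptual but organizational: one must treat the point $\infty$ coherently so that no ambiguity is introduced either in the constant $c$ or in the tallies of multiplicities when $f$ itself has a zero or pole at $\infty$. Once Lemmas A\hskip-0.5pt\ref{appendix-a-lemma-a} and A\hskip-0.5pt\ref{appendix-a-lemma-b} are in hand, the remainder is routine bookkeeping driven by unique factorization in $\mathbb{C}[z]$ and the vanishing of the degree of a principal divisor.
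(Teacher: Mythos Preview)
Your proposal is correct and follows essentially the same route as the paper: divide $f$ by the explicit rational function carrying the finite zeros and poles, observe that the quotient is meromorphic on $\hat S^1$ with no finite zeros or poles, and then force it to be a nonzero constant; the zero/pole count is settled by reading off the order at $\infty$ in the coordinate $w=1/z$. Your write-up is in fact more careful than the paper's, which jumps directly from ``meromorphic on $\hat S^1$ without zeros and poles in $\mathbb{C}$'' to ``a polynomial without roots is a constant'' without spelling out the case split at $\infty$ that you make explicit, and which omits the uniqueness argument you supply via unique factorization in $\mathbb{C}[z]$.
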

\begin{proof}
The function
\begin{equation}\label{appendix-a-meromorphic-function-a}
    g(z)=f(z)\diagup\left[\frac{\prod_{1\le i\le n}(z-z_i)^{h_i}}{\prod_{1\le j\le m}(z-p_j)^{k_j}}\right]
\end{equation}

is a meromorphic function on $\hat S^1$ without zeros and poles in $\mathbb{C}$. But a polynomial without roots in $\mathbb{C}$ is a constant $c\in\mathbb{C}$, hence we get the expression (\ref{appendix-a-equation-appendix-lemma-c}). From (\ref{appendix-a-equation-appendix-lemma-c}) we can obtain the behaviour of $f(z)$ in the standard chart
$(\hat S^1\setminus\{0\},\varphi(z)=\frac{1}{w})$:
\begin{equation}\label{appendix-a-behaviour-of-f-in-the-standard-chart}
f(\frac{1}{w})=c\, \frac{\prod_{1\le i\le n}(w^{-1}-z_i)}{\prod_{1\le j\le m}(w^{-1}-p_j)^{k_j}}=c\, w^{m-n}\frac{\prod_{1\le i\le n}(1-wz_i)}{\prod_{1\le j\le m}(1-wp_j)^{k_j}}
\end{equation}

hence
\begin{equation}\label{appendix-a-limits-of-f-in-the-standard-chart}
\mathop{\lim}\limits_{z\to\infty}f(z)=\mathop{\lim}\limits_{z\to\infty}z^{n-m}=\left\{\begin{array}{ll}
                                                                                                \infty& (n-m)>0,\, \hbox{\rm pole of order $n-m$}\\
                                                                                                0& (n-m)<0,\, \hbox{\rm zero of order $m-n$}.
                                                                                              \end{array}\right.
\end{equation}

Therefore if the number $n$ of finite zeros and the number $m$ of finite poles are such that $n-m>0$ (resp. $n-m<0$) $f:\hat S^1\to \hat S^1$ has at $\infty$, a pole (resp. a zero) of order $n-m$ (resp. $m-n$).  In the first case (resp. second case) we get that the number of poles is $m+(n-m)=n$ equal to the number of zeros (resp. the number of zero is $n+(m-n)=m$ equal to the number of poles)
\end{proof}
We have also the following well-known lemmas.
\begin{alemma}\label{appendi-a-lemma-d}
Two meromorphic functions on a compact Riemann surfaces having the same principal part at each of their poles must differ by a constant.
\end{alemma}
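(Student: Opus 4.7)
The plan is to form the difference $h = f - g$, show that it extends to a holomorphic function on the whole compact Riemann surface $X$, and then invoke Lemma~A\hskip-0.5pt\ref{appendix-a-lemma-b} (every holomorphic function on a compact connected Riemann surface is constant) to conclude that $h$ is a constant $c \in \mathbb{C}$, i.e.\ $f = g + c$.

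First I would let $P = Pol(f) \cup Pol(g)$, which is a finite set by compactness of $X$. Away from $P$, both $f$ and $g$ are holomorphic, so $h = f - g$ is automatically holomorphic there. At each pole $p_i \in P$, by hypothesis the principal parts of $f$ and $g$ coincide; writing their convergent Laurent expansions in a local coordinate $z$ centered at $p_i$, in the form used in~(\ref{appendix-a-laurent-expression-near-a-pole}), the negative-power contributions cancel term-by-term in the subtraction $f - g$. What remains near $p_i$ is a power series in non-negative powers of $z$, i.e.\ $h$ has a removable singularity at $p_i$ and extends holomorphically across it.

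Thus $h$ is a globally defined holomorphic function on $X$. Assuming, as is implicit in the intended application to $X = \hat S^1$, that $X$ is connected, Lemma~A\hskip-0.5pt\ref{appendix-a-lemma-b} forces $h \equiv c$ for some $c \in \mathbb{C}$, giving $f - g = c$. If $X$ were not connected, the same argument applied on each connected component yields a locally constant difference, which is still the natural statement of the lemma.

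There is essentially no obstacle: the only point requiring care is the observation that ``having the same principal part'' is exactly what is needed to cancel the singular terms of the Laurent expansion, so that $h$ becomes holomorphic across $P$. The rest is a direct appeal to Lemma~A\hskip-0.5pt\ref{appendix-a-lemma-b}, which itself is the standard Liouville-type rigidity already used earlier in the proof of Lemma~A\hskip-0.5pt\ref{appendix-a-lemma-a}.
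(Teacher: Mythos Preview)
Your argument is correct and is exactly the standard proof: subtract, observe that the principal parts cancel so the difference has only removable singularities, and apply Lemma~A\hskip-0.5pt\ref{appendix-a-lemma-b}. The paper itself does not give a proof of this lemma---it is simply listed among the ``well-known lemmas'' following Lemma~A\hskip-0.5pt\ref{appendix-a-lemma-c}---so your write-up supplies precisely the intended justification.
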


\begin{alemma}\label{appendix-a-lemma-e}
Two meromorphic functions on a compact Riemann surfaces having the same poles and zeros (multiplicities included) must differ by a constant factor.
\end{alemma}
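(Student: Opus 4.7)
The plan is to reduce the statement to Lemma A\hskip-0.5pt\ref{appendix-a-lemma-b} (every holomorphic function on a compact connected Riemann surface is constant) by forming the quotient of the two functions and verifying that it extends to a global holomorphic function with no zeros.

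First I would let $f,g\in\mathcal{M}(X)$ have the same divisors, i.e.\ $(f)=(g)$, on the compact connected Riemann surface $X$, and set $h:=f/g$. The task is to show $h$ is holomorphic and nowhere zero on all of $X$. Away from the common zero/pole locus $S=\mathrm{supp}(f)\cup\mathrm{supp}(g)$, both $f$ and $g$ are holomorphic and nonzero, so $h$ is trivially holomorphic and nonzero there. At a point $p\in S$, I would use local coordinates $z$ centred at $p$ to write $f(z)=(z-p)^{n}\,\tilde f(z)$ and $g(z)=(z-p)^{n}\,\tilde g(z)$ with the same integer $n=(f)_p=(g)_p$ (positive for a common zero, negative for a common pole, zero if $p$ is neither), where $\tilde f,\tilde g$ are holomorphic and nonvanishing in a neighbourhood of $p$. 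Dividing, the factors $(z-p)^n$ cancel and $h(z)=\tilde f(z)/\tilde g(z)$, which is holomorphic and nonzero at $p$.

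Hence $h$ extends to a globally defined holomorphic map $h:X\to\mathbb{C}^\ast\subset\mathbb{C}$. By Lemma A\hskip-0.5pt\ref{appendix-a-lemma-b} any such holomorphic function on a compact connected Riemann surface is constant, so $h\equiv c$ for some $c\in\mathbb{C}$. Since $h$ is nowhere zero we in fact have $c\in\mathbb{C}^\ast$, and therefore $f=c\,g$, which is the claim.

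The only subtle point is the local cancellation argument at points of $S$, which is where the hypothesis that multiplicities coincide is used; without equal multiplicities the quotient would still have zeros or poles and the appeal to Lemma A\hskip-0.5pt\ref{appendix-a-lemma-b} would fail. Everything else is routine, and the result can also be viewed as the corollary that on a compact connected Riemann surface the divisor map $\mathcal{M}(X)^\ast\to\mathbb{D}(X)$ has kernel exactly $\mathbb{C}^\ast$.
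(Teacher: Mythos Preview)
Your proof is correct and is exactly the standard argument for this fact. The paper itself does not supply a proof of Lemma~A\hskip-0.5pt\ref{appendix-a-lemma-e}: it is simply listed, together with Lemma~A\hskip-0.5pt\ref{appendi-a-lemma-d}, under the heading ``We have also the following well-known lemmas,'' and no argument is given. Your reduction to Lemma~A\hskip-0.5pt\ref{appendix-a-lemma-b} via the quotient $h=f/g$ and the local cancellation of the common factor $(z-p)^{n}$ is the expected proof, and the observation that $c\in\mathbb{C}^\ast$ is a pleasant bonus.
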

 \begin{figure}[t]
 \includegraphics[height=4cm]{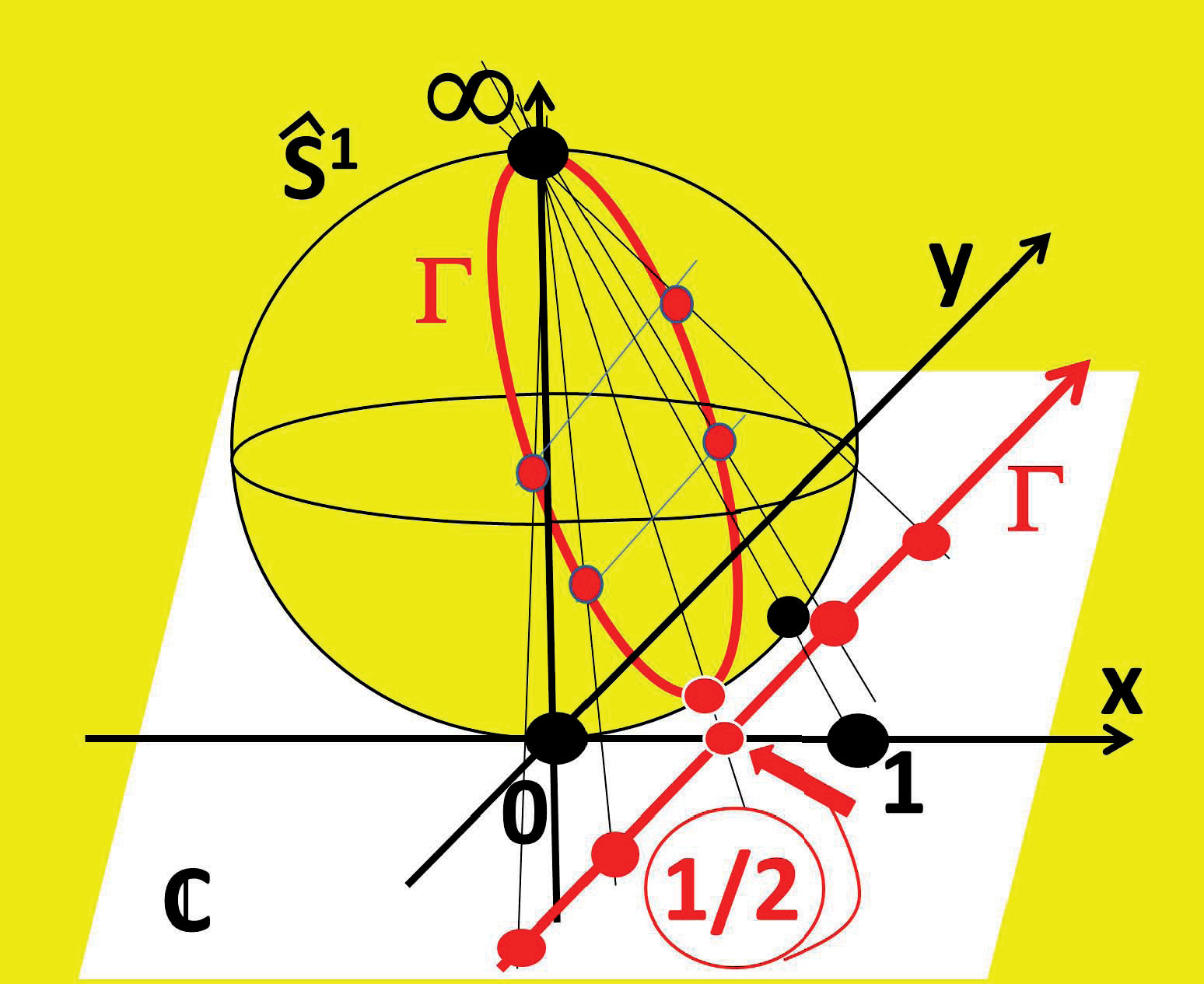}
\renewcommand{\figurename}{Fig.}
\caption{Representation of the (red) critical circle $\Gamma_{\frac{1}{2}}=\Gamma\subset\hat S^1$ by means of the stereographic projection of the (red) critical line $s=\frac{1}{2}$ (again denoted by $\Gamma$), in the complex plane $\mathbb{C}\cong\mathbb{R}^2$. There are also represented the first two zeros of $\hat\zeta(s)$ on $\Gamma$, besides some other symmetric couples of zeros identified by $\tilde\zeta(s)$. (Here $x=\Re(s)$ and $y=\Im(s)$.)  The stereographic projection identifies also the (red) point $\frac{1}{2}$ of the $x$-axis, with a (red) point on the circle in $\hat S^1$, representing the compactified $x$-axis, that is in the middle of the arc between $0$ and the (black) point corresponding to $1$ in the stereographic projection. Such black points represent the poles of $\hat\zeta:\hat S^1\to\hat S^1$.}
\label{appendix-a-critical-circle-and-symmetric-zero-couples}
\end{figure}
Let us assume, now, that $\hat\zeta:\hat S^1\to\hat S^1$ is a meromorphic function having $n$ simple zeros at finite coinciding with the ones of $\tilde \zeta$ that are nearest to the $x$-axis ($\Im(s)=0$) in the $\mathbb{C}$-plane. Let us assume that the poles of $\hat\zeta$ at finite are in correspondence one-to-one with the poles $\{0,1\}$ of $\tilde\zeta$, by means of the stereographic projection $\hat S^1\to\mathbb{C}$. By resuming we assume that
$\sharp(Z_{finite}(\hat\zeta))=\sharp(\{z_1,\cdots,z_n\})=n>\sharp(Pol_{finite}(\hat\zeta))=2=m$. Then from above results we get that $\hat\zeta$ is uniquely identified, up to a multiplicative constant $c$, by the following expression:
\begin{equation}\label{appendix-a-quantum-riemann-zeta-expression}
    \hat\zeta(s)=c\, \frac{\prod_{1\le i\le n}(s-z_i)}{s(s-1)}.
\end{equation}

We know also that such a meromorphic function $\hat\zeta:\hat S^1\to\hat S^1$, has a pole of order $n-2$ at $\infty$. Furthermore we can fix the constant $c$ by imposing that at some finite point $s_0\in\mathbb{C}$, one has $\hat\zeta(s_0)=\tilde\zeta(s_0)$. We can, for example take $s_0=\frac{1}{2}$. Then we have:\footnote{We have used the following valuations: $\pi^{-\frac{1}{4}}\thickapprox 102.87\times 10^{-4}$, $\Gamma(\frac{1}{4})\thickapprox 3.62$, $\zeta(\frac{1}{2})\thickapprox -1.4603$.}
\begin{equation}\label{appendix-a-constant-estimate}
c=-\frac{1}{4}\frac{\tilde\zeta(\frac{1}{2})}{\prod_{1\le i\le n}(\frac{1}{2}-z_i)},\, \tilde\zeta(\frac{1}{2})=\pi^{-\frac{1}{4}}\Gamma(\frac{1}{4})\zeta(\frac{1}{2})\thickapprox-0.05438.
\end{equation}
(Note that the set of zeros of $\tilde\zeta$ is not finite, since $\tilde\zeta$ is a meromorphic function on the noncompact Riemann surface $\mathbb{C}$.\footnote{A meromorphic function $f:X\to\mathbb{C}$, can have $\sharp(Z(f))=\aleph_0$. For example $\sharp(Z(\zeta))=\aleph_0$, since its trivial zeros are numbered by even negative integers, and zeros over the critical line were proved to be infinite by G. H. Hardy \cite{HARDY}. As a by product it follows that $\sharp(Z(\tilde\zeta))=\aleph_0$.}) The meromorphic function $\hat\zeta:\hat S^1\to\hat S^1$ satisfies the condition $n-(2+n-2)=0=\hbox{\rm deg}(\hat\zeta)$.
Therefore, from above calculations it should appear that $\hat\zeta(s)$ has a pole of order $n-2$ at $\infty$. On the other hand we can consider $\hat S^1=D_-\bigcup D_+$, with $D_+$ (resp. $D_-$) a little disk centered at $\infty$ (resp. $D_-$ a disk centered at $0$) and with $\Xi=\partial D_+=\partial D_-\thickapprox S^1$. We can assume that $\hat\zeta(s)$ has not zeros in $D_+$. In fact since $\sharp(Z(\hat\zeta))<\aleph_0$, and we can exclude that $\infty\in Z(\hat\zeta)$, we can draw a little circle $\Xi$ around $\infty\in \hat S^1$, such that $\Xi=\partial D_+$, and such in $D_+$ do not fall zeros of $\hat\zeta(s)$. Then, in order that $\hat\zeta$ should be a quantum-extension of $\hat\zeta$ we shall require that $\hat\zeta|_{D_{-}}$ is a meromorphic function on $D_{-}$, hence must be ${\rm deg}(\hat\zeta|_{D_-})=n-2=0$, namely $n=2$. As a by product we get that $\infty$ is a pole of degree $0$, namely it is a regular value since $\mathop{\lim}\limits_{w\to 0}w^{0}=1$. Therefore, $\hat\zeta$ in order to be a quantum-extension of $\tilde\zeta$, must be a meromorphic function $\hat\zeta:\hat S^1\to\hat S^1$, with finite poles $0$ and $1$, and only two finite zeros belonging to the critical circle in $\hat S^1$. This means that in order to identify such a meromorphic function we can choice the two symmetric zeros on the critical circle exactly corresponding to the two symmetric zeros on the critical line of $\tilde\zeta$, that are nearest to the $x$-axis ($x=\Im(s)=0$). We call such mapping the {\em quantum-complex zeta Riemann function}.\footnote{Warn ! The quantum-complex zeta Riemann function is a regular function at $\infty\in\hat S^1$. More precisely one has $\hat\zeta(\infty)=c\approx 6.8046$. This can be directly obtained by considering that the two non trivial zeros of $\zeta(s)$, nearest to the $x$-axis, are $z_1\approx\frac{1}{2}+i\, 14.1347$ and $z_2\approx\frac{1}{2}-i\, 14.1347$. Then from (\ref{appendix-a-constant-estimate}) we get $c\approx-\frac{1}{4}\frac{-0.05438}{(14.1347)^2}\approx 6.8046$. Let us emphasize that $\hat\zeta|_{D_{+}}$ has not zero and poles, hence is a holomorphic function on the compact Riemann surface $D_{+}$. Then the maximum modulus principle (resp. minimum modulus principle) assures that either $\hat\zeta|_{D_{+}}:D_{+}\to \mathbb{C}$ is a constant function, or, for any point $s_0$ inside $D_{+}$ there exists other points arbitrarily close to $s_0$ at which $|\hat\zeta|_{D_{+}}|$ takes larger (resp. lower) values. So the first hypothesis should imply that $\hat\zeta|_{D_{+}}=c=\hat\zeta(\infty)$. In such a case $\hat\zeta$ should have all points in $D_{+}$ as critical points. But this contradicts the fact that $\hat\zeta$ is a meromorphic function with an unique finite critical value at $s=\frac{1}{2}$. Therefore, it remains to consider that $\hat\zeta|_{D_{+}}$ is not constant and that the maximum (resp. minimum) of $|\hat\zeta|_{D_{+}}|$ cannot be inside $D_{+}$ but on its boundary $\partial D_{+}$.}

\begin{aremark}
From the above proof it follows that the relation between the two meromorphic mappings $\tilde\zeta:\mathbb{C}\to\hat S^1$ and $\hat\zeta:\hat S^1\to\hat S^1$ is a morphism in the category $\mathfrak{T}$ between morphisms $\tilde\zeta,\, \hat\zeta\in Hom(\mathfrak{Q}_{\mathbb{C}})$. The situation is resumed in the commutative diagram~{\rm(\ref{commutative-diagram-quantum-zeta-riemann-function-a})}. In Fig.~\ref{appendix-a-critical-circle-and-symmetric-zero-couples} is represented the critical circle $\Gamma\subset\hat S^1$ by means of the stereographic projection of the critical line $s=\frac{1}{2}$
in the complex plane $\mathbb{C}\cong\mathbb{R}^2$.\footnote{Let us recall that the stereographic projection
$\pi_\star:\hat S^1\to\mathbb{C}$, is represented in complex coordinates $(z,w)\in\mathbb{C}^2$, by the function
$\pi_\star(z_0,w_0)=z=\frac{2z_0}{2-\Re(w_0)}\in\mathbb{C}$, such that $(z_0,w_0)\in \hat S^1\subset\mathbb{C}^2$,
hence satisfies the following complex algebraic equation $P(z_0,w_0)=z_0\bar z_0-1+[\Re(w_0)-1]^2=0$.
[One can also write $\Re(w_0)=\frac{1}{2}(w_0+\bar w_0)$.] One has $\mathop{\lim}\limits_{p=(z_0,w_0)\to 0=(0,0)}\pi_\star(z_0,w_0)=0$, and $\mathop{\lim}\limits_{p=(z_0,w_0)\to \infty=(0,2)}\pi_\star(z_0,w_0)=\frac{0}{0}$. On the other hand for continuity we can assume $\mathop{\lim}\limits_{p=(z_0,w_0)\to \infty=(0,2)}\pi_\star(z_0,w_0)=\infty\in \mathbb{C}$.
Therefore, $\pi_\star:\mathbb{C}\to\hat S^1$ is a derivable function with a simple zero at $0\in \hat S^1$ and a simple pole
$\infty\in\hat S^1$. Warn ! $\pi_\star$ cannot be holomorphic since any holomorphic map $\hat S^1\to\mathbb{C}$ is constant. This can be also
seen since $(\partial \bar z_0.\pi_\star)=-\frac{z_0}{(2-w_0)^2(\Re(w_0)-1)}$, hence $(\partial \bar z_0.\pi_\star)=0$ only at $z_0=0$ and
not on all $\hat S^1$. Moreover, the stereographic projection $\pi_\star$ is a proper mapping. In fact any continuous map from a
compact space to a Hausdorff space is proper (and closed). Therefore we can apply to $\pi_\star$ the Riemann-Hurwitz theorem
to identify its total branching index. (See footnote in Lemma \ref{riemann-hurwitz-formula}.) In fact we get
$[\chi(\hat S^1)=2]=[{\rm deg}(\pi_\star)=2]\cdot[\chi(\mathbb{C})=1]-b$, hence $b=0$.} There are also represented the first
two zeros on $\Gamma$, besides some other symmetric couple of zeros identified by zeros of $\tilde\zeta$ on the critical lines.
In Fig.~
\ref{appendix-a-completed-critical-line-on-quantum-critical-line} is given a different representation of the curve $\hat\zeta(t)$ on the Riemann sphere $\hat S^1$.
\end{aremark}
\begin{figure}[h]
 \includegraphics[height=3.5cm]{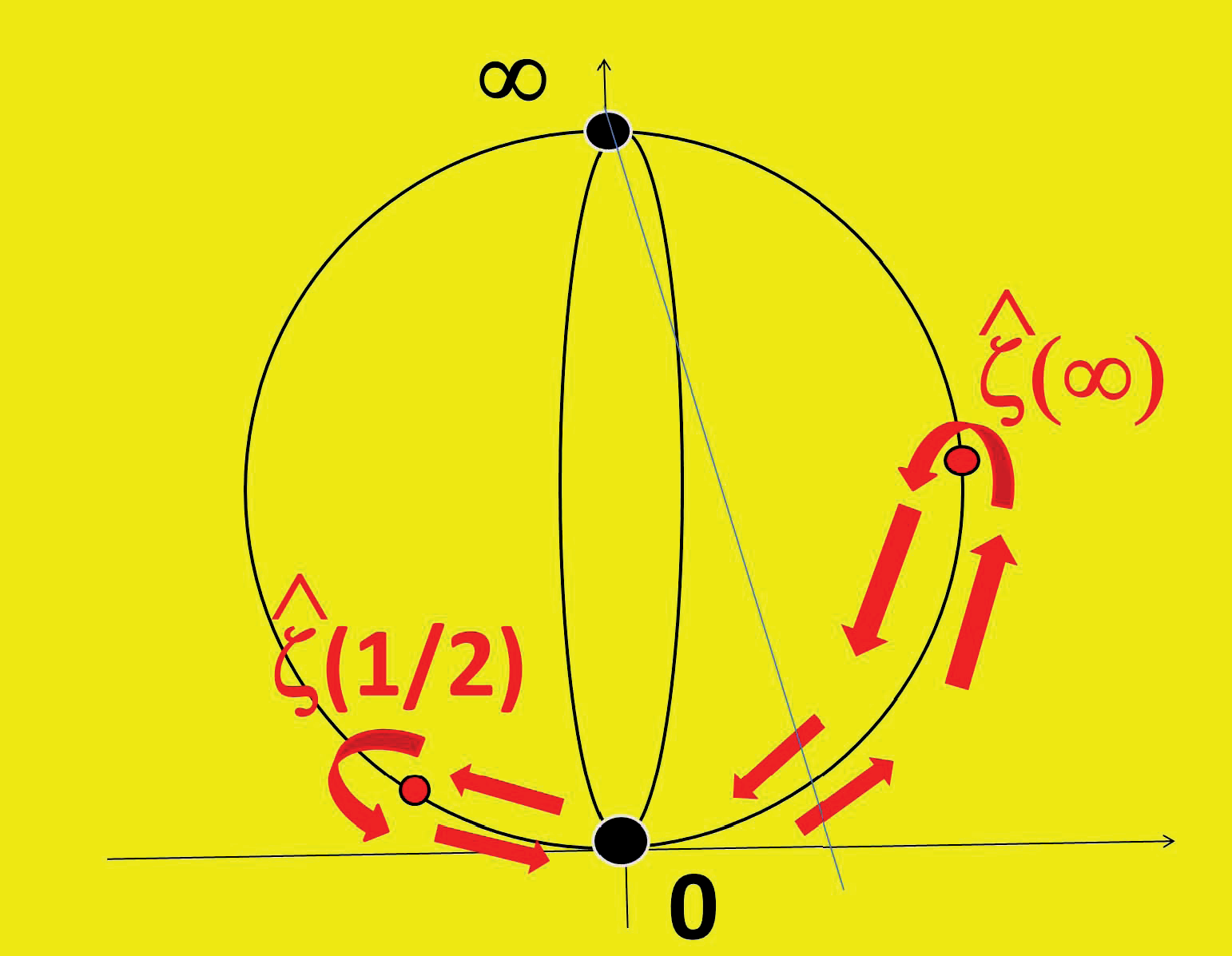}
\renewcommand{\figurename}{Fig.}
\caption{Representation of the quantum-complex curve $\hat\zeta(t)\subset\hat S^1$. In the picture $\hat\zeta(t)$ is drawn as a circulation of arrows. The points $\hat\zeta(\frac{1}{2})$ and $\hat\zeta(\infty)$ are branching points of $\hat\zeta$.}
\label{appendix-a-completed-critical-line-on-quantum-critical-line}
\end{figure}

\appendix{\bf Appendix B: Proof that $\mathbf{\hat b:\mathbb{C}\to\hat S^1}$ is not a holomorphic function .}\label{appendixb}
\renewcommand{\theequation}{B.\arabic{equation}}
\setcounter{equation}{0}  
In this appendix we shall prove that the mapping $\hat b:\mathbb{C}\to\hat S^1$ considered in the commutative diagram (\ref{commutative-diagram-quantum-zeta-riemann-function-a}) is not holomorphic. For this it is enough to prove that the mapping $\phi$ is not holomorphic, where $\phi:\mathbb{C}\to\mathbb{C}$ is the mapping entering in the composition $\hat b=\hat a\circ \phi$. In fact $\phi(s)=\phi(x+i y)=u(x,y)+iv(x,y)$ with $u(x,y)=x$ and $v(x,y)=z_k+\frac{y-z_k}{z_{k+1}-z_k}$, if $z_k\le y\le z_{k+1}$. Thus $u$ and $v$ have first partial derivative, even if these are not continuous everywhere. But this is not sufficient to conclude that $\phi$ is not holomorphic. In fact, we can use the following lemma.

\begin{blemma}[Looman-Menehoff theorem]\label{appendix-b-looman-menehoff-theorem}
If $f=u+iv$ is continuous and $u$ and $v$ have everywhere but a countable set partial derivative, and they satisfy the Cauchy-Riemann equations {\em(\ref{appendix-b-cauchy-riemann-equations})},
\begin{equation}\label{appendix-b-cauchy-riemann-equations}
    \{(\partial x.u)=(\partial y.v),\,  (\partial y.u)=-(\partial x.v)\}\, \Leftrightarrow\, \{(\partial \bar z.f)=\frac{1}{2}((\partial x.f)+(\partial y.f))=0\},
\end{equation}

then $f$ is holomorphic.
\end{blemma}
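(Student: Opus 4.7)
The plan is to verify Morera's criterion for $f$: show that for every closed axis-aligned rectangle $R$ whose closure lies in $\Omega$ we have $\oint_{\partial R} f(z)\,dz = 0$. Since $f$ is continuous by hypothesis, Morera's theorem then gives holomorphy. The heuristic is that on the set where the partials exist and the Cauchy-Riemann equations hold we have $\partial_{\bar z} f = 0$, so by a formal application of Green's theorem $\oint_{\partial R} f\,dz = 2i \iint_{R} \partial_{\bar z} f \, dA = 0$; the real content of the theorem is making this formal calculation rigorous when the partials are only assumed to exist pointwise off a countable set and no continuity of the partials is available.

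First I would enumerate the exceptional set $E = \{e_n\}_{n\ge 1}$ and, given $\varepsilon > 0$, enclose each $e_n$ in an open square $Q_n$ whose closure is contained in $\Omega$, with diameter so small that uniform continuity of $f$ on a compact neighborhood of $R$ forces the perimeter contribution to satisfy $\bigl|\oint_{\partial Q_n} f\,dz\bigr| < \varepsilon\, 2^{-n}$. The aim is to reduce the vanishing of $\oint_{\partial R} f\,dz$ to the vanishing of the integral over the boundary of the "punctured" region $R \setminus \bigcup_n Q_n$, on which the partials of $u$ and $v$ exist everywhere and satisfy the Cauchy-Riemann equations.

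Second I would execute a Goursat-style bisection argument on $R \setminus \bigcup_n Q_n$. At each point $z_0$ in this punctured region the existence of the four partials together with the Cauchy-Riemann equations and the continuity of $f$ yields a weak first-order approximation of the form
\begin{equation*}
f(z) - f(z_0) = A(z_0)(z - z_0) + o(|z - z_0|) \quad \text{along the coordinate directions,}
\end{equation*}
which is enough to show $\bigl|\oint_{\partial R'} f\,dz\bigr| = o(\operatorname{area}(R'))$ uniformly for small sub-rectangles $R' \ni z_0$. Repeated bisection then forces $\oint_{\partial R} f\,dz = 0$, after which letting $\varepsilon \to 0$ and summing the corrections from the covers of $E$ completes the Morera verification.

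The hard part will be step two. Having partials existing pointwise is strictly weaker than total differentiability, so one cannot directly invoke the classical linearization needed for the Goursat subdivision. The fix, following the original Looman-Menchoff argument, is a Baire category argument: one shows that the subset of $R \setminus \bigcup_n Q_n$ on which the CR-induced infinitesimal estimate fails is meager, hence cannot contain an open rectangle, and a careful exhaustion by "good" subrectangles together with control of the perimeter contributions from the removed squares closes the argument. This Baire/category step, rather than any computation, is the genuine obstacle.
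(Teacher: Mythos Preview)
The paper does not prove this lemma at all; its entire proof is a citation to the original sources \cite{LOOMAN, MENCHOFF, MONTEL}. Your outline is essentially the classical route found in those references: reduce to Morera via vanishing of $\oint_{\partial R} f\,dz$, excise the countable exceptional set by small squares with controlled perimeter contribution, and then handle the remaining region by a subdivision argument backed by Baire category. So in substance you are reproducing what the cited literature does, whereas the paper simply defers to it.

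One comment on the sketch itself: your second step as written is slightly misleading. Pointwise existence of the four partials plus the Cauchy--Riemann relations does \emph{not} give the approximation $f(z)-f(z_0)=A(z_0)(z-z_0)+o(|z-z_0|)$ even ``along the coordinate directions'' in a way that feeds directly into Goursat bisection; if it did, the theorem would be almost trivial. The actual mechanism is different: one first uses Baire category on the nested closed sets $\{z:\text{difference quotients bounded by }n\}$ to show that the rectangle functional $R\mapsto\oint_{\partial R}f\,dz$ is absolutely continuous with respect to area on a residual family of subrectangles, and only then does one invoke the CR equations to show its density vanishes almost everywhere. You correctly flag the Baire step as the crux, but it enters earlier and more structurally than your write-up suggests---it is what replaces differentiability, not merely what patches the places where differentiability fails.
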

\begin{proof}
See, e.g., \cite{LOOMAN, MENCHOFF, MONTEL}.
\end{proof}
Therefore, in order to state that $\phi$ is not holomorphic it is enough to verify that it does not verify equations (\ref{appendix-b-cauchy-riemann-equations}). In fact $(\partial x.u)=1\not=(\partial y.v)=\frac{1}{z_{k+1}-z_k}$, even if $(\partial y.u)=0=(\partial x.v)$. This concludes the proof that $\hat b$ is not a holomorphic mapping.

\appendix{\bf Appendix C: Proof that $\mathbf{\tilde\zeta\circ\pi_*:\hat S^1\to\hat S^1}$ is not a holomorphic function .}\label{appendixc}
\renewcommand{\theequation}{C.\arabic{equation}}
\setcounter{equation}{0}  

Let us recall that $\pi_\star:\hat S^1\to\mathbb{C}$ is proper closed map and $\tilde\zeta:\mathbb{C}\to\hat S^1$ is meromorphic. But these properties do not assure that their composition $\hat \zeta_{trivial}=\tilde\zeta\circ\pi_*:\hat S^1\to\hat S^1$ is a meromorphic mapping, hence neither a quantum-complex zeta Riemann function. In fact the first requirement in such a case should be that $\hat \zeta_{trivial}$ is holomorphic. In the following we will prove that this cannot be the case. Let us first underline that $\hat \zeta_{trivial}$ is not constant. In fact $\hat \zeta_{trivial}$ sends $\hat 0$ and $\hat 1$ onto $\infty$, but it sends to $0$ the points $\hat z_k$ on the characteristic circle $\Gamma_{\frac{1}{2}}$, corresponding to the zeros $z_k$ of $\tilde\zeta$ on the characteristic line. Now if $\hat \zeta_{trivial}$ should be holomorphic it should be an open mapping. This property has the following consequence.

($\bigstar$)\hfill The set of points $r\in\hat S^1$ with $\hat \zeta_{trivial}(r)=s$, for fixed $s\in\hat S^1$, has no accumulation point in $\hat S^1$.

Let us now use the following lemmas.
\begin{clemma}[Topological properties of $\mathbb{Z}\subset\mathbb{R}$]\label{appendix-c-c1lemma}
Let us consider $\mathbb{Z}$ as a subset of $\mathbb{R}$. Then one has the following topological properties for $\mathbb{Z}$.

{\rm(i)} The interior of $\mathbb{Z}$ is empty: ${\rm int}(\mathbb{Z})=\varnothing$.

{\rm(ii)} The closure of $\mathbb{Z}$ coincides with $\mathbb{Z}$: $\overline{\mathbb{Z}}=\mathbb{Z}$ and the boundary coincides with $\mathbb{Z}$, $\partial\mathbb{Z}=\mathbb{Z}$.

{\rm(iii)} The derived set $\mathbb{Z}'$ of $\mathbb{Z}$, i.e., the set of accumulation points of $\mathbb{Z}\subset\mathbb{R}$ is empty: $\mathbb{Z}'=\varnothing$.

{\rm(iv)} $\mathbb{Z}$ is a closed subset of $\mathbb{R}$.

{\rm(v)} $\mathbb{Z}$ is a complete topological space.
\end{clemma}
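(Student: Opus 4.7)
The plan is to dispatch all five items by returning to first principles of the subspace topology on $\mathbb{Z}\subset\mathbb{R}$, exploiting the obvious fact that integers are uniformly spaced at distance $1$. The single observation driving everything is that for each $n\in\mathbb{Z}$ the open ball $(n-\tfrac{1}{2},n+\tfrac{1}{2})$ meets $\mathbb{Z}$ only in $\{n\}$; everything else will be an immediate corollary of this, combined with the density of $\mathbb{R}\setminus\mathbb{Z}$ in $\mathbb{R}$.

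For (i), I would argue that no integer $n$ is interior to $\mathbb{Z}$: any open neighbourhood of $n$ contains an interval $(n-\varepsilon,n+\varepsilon)$, and such an interval contains non-integer reals (e.g.\ $n+\varepsilon/2$ if $\varepsilon<1$). For (ii), given any $x\in\mathbb{R}\setminus\mathbb{Z}$, I take the unique integer $n$ with $n<x<n+1$ and put $\varepsilon=\min(x-n,\,n+1-x)>0$; then $(x-\varepsilon,x+\varepsilon)\cap\mathbb{Z}=\varnothing$, so $x\notin\overline{\mathbb{Z}}$. This gives $\overline{\mathbb{Z}}=\mathbb{Z}$, and combined with (i) yields $\partial\mathbb{Z}=\overline{\mathbb{Z}}\setminus\mathrm{int}(\mathbb{Z})=\mathbb{Z}$. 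Item (iv) is then a restatement of $\overline{\mathbb{Z}}=\mathbb{Z}$.

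For (iii), the key observation is that each integer $n$ is isolated in $\mathbb{Z}$ by the separating interval $(n-\tfrac12,n+\tfrac12)$, so no point of $\mathbb{Z}$ is an accumulation point of $\mathbb{Z}$; and no $x\notin\mathbb{Z}$ is an accumulation point either, by the same argument used in (ii). Hence the derived set is empty. For (v), I note that the subspace topology on $\mathbb{Z}$ coincides with the discrete topology (since each singleton is open by the isolating interval above) and that the induced metric satisfies $d(m,n)\ge 1$ for $m\ne n$. Therefore any Cauchy sequence in $\mathbb{Z}$ is eventually constant and so converges in $\mathbb{Z}$; equivalently, $\mathbb{Z}$ is a closed subset of the complete metric space $\mathbb{R}$, hence itself complete.

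There is no genuine obstacle in this proof: everything rests on the single fact that consecutive integers are at distance exactly $1$, and the remaining content is bookkeeping about which point-set definition (interior, closure, boundary, derived set, Cauchy) one is verifying. The only care needed is to present (i)--(v) in an order that lets later items reuse earlier ones — specifically deriving (iv) from (ii), and noting that (iii) is essentially a reformulation of the isolation property that also underlies (v).
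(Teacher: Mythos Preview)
Your proposal is correct and follows essentially the same approach as the paper: both rest on the unit spacing of the integers to produce isolating intervals, and both dispatch (v) by observing that Cauchy sequences in $\mathbb{Z}$ are eventually constant. The only differences are organizational: the paper proves (iv) independently by writing $\mathbb{R}\setminus\mathbb{Z}=\bigcup_n(n,n+1)$ rather than deducing it from (ii), and for (ii) it first uses $\overline{\mathbb{Z}}=\mathrm{int}(\mathbb{Z})\cup\partial\mathbb{Z}$ together with (i) to reduce to computing $\partial\mathbb{Z}$, whereas you compute the closure directly and then read off the boundary.
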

\begin{proof}
(i) A point $n\in\mathbb{Z}$ is an interior point iff there exists an open interval $]\alpha,\beta[\subset\mathbb{R}$ containing $n$ such that it is contained into $\mathbb{Z}$. But this is impossible !

(ii) Since one has $\overline{\mathbb{Z}}={\rm int}(\mathbb{Z})\bigcup\partial\mathbb{Z}$, it follows from (i) that $\overline{\mathbb{Z}}=\partial\mathbb{Z}$. On the other hand, $\partial\mathbb{Z}=\{p\in\mathbb{R}\, |\, p\not\in{\rm int}(\mathbb{Z}),\, p\not\in{\rm ext}(\mathbb{Z})\}$, Since ${\rm ext}(\mathbb{Z})={\rm int}(\complement(\mathbb{Z}))$, hence $\partial\mathbb{Z}=\mathbb{Z}$.

(iii) $p\in\mathbb{Z}'$ iff any open set $U\subset\mathbb{R}$ containing $p\in\mathbb{R}$, contains also a point $a\in\mathbb{Z}$, with $a\not=p$. Then for any open interval $]\alpha,\beta[\subset\mathbb{R}$ containing $p$ one should have some integer $n\in\mathbb{Z}$, $n\not= p$, with $n\in\in]\alpha,\beta[$. But this is impossible, since for any $p\in\mathbb{R}$ let $n(p)$ the nearest integer to $p$. Then $]\alpha,\beta[$ can be taken centered on $p$ and with $n(p)<\alpha<p$. This is surely an open set containing $p$, but it cannot contain any integer. Since this holds for any $p$, it follows that the set $\mathbb{Z}'=\varnothing$.

(iv) $\mathbb{Z}$ is a closed subset of $\mathbb{R}$ since $\complement\mathbb{Z}=\bigcup_{n}]n,n+1[$, i.e., the complement of $\mathbb{Z}$ is the union of open intervals of $\mathbb{R}$, hence it is an open subset of $\mathbb{R}$.

(v) $\mathbb{Z}$ is a complete topological space, since all its Cauchy sequences are of the type $<a_1.a_2,\cdots,a_k,b,b.b\cdots>$ with $a_i,b\in\mathbb{Z}$, therefore have as convergence point $b\in\mathbb{Z}$.\footnote{Note that this is an example where one can understand the difference between the concept of accumulation point and the one of point of convergence of a Cauchy sequence. Compare properties (iii) with (v) in above Lemma C\hskip-0.5pt\ref{appendix-c-c1lemma}.}
\end{proof}
\begin{clemma}[Topological properties of $\mathbb{Z}\subset \mathbb{R}^{+}=\mathbb{R}\bigcup\{\infty\}\backsimeq S^1$]\label{appendix-c-c2lemma}
The topological properties of $\mathbb{Z}$, as subset of the Alexandrov compactified of $\mathbb{R}$, i.e., $S^1$, are the same of the ones when one considers $\mathbb{Z}\subset\mathbb{R}$, except with respect to the property {\em(iii)} in Lemma C\hskip-0.5pt\ref{appendix-c-c1lemma} that must instead be substituted with $\mathbb{Z}'=\{\infty\}$.
\end{clemma}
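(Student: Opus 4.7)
The plan is to work directly from the definition of the Alexandrov topology on $\mathbb{R}^{+}=\mathbb{R}\cup\{\infty\}$: the open sets are the open sets of $\mathbb{R}$ together with all sets of the form $\mathbb{R}^{+}\setminus K$, where $K\subset\mathbb{R}$ is compact. This means that the topology inherited by $\mathbb{R}\subset\mathbb{R}^{+}$ agrees with its usual topology, so every argument in Lemma~C\hskip-0.5pt\ref{appendix-c-c1lemma} that takes place at a finite integer $n\in\mathbb{Z}$ transfers verbatim to the compactified setting. Accordingly, I would inspect each of the five assertions (i)--(v), noting for each that the only thing that needs new work is the behaviour at the added point $\infty$.

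For (i), no finite $n\in\mathbb{Z}$ can be interior to $\mathbb{Z}$ by the same argument as in $\mathbb{R}$, and $\infty\notin\mathbb{Z}$ so the question does not arise there; thus ${\rm int}(\mathbb{Z})=\varnothing$. For (iv) and (v), I would check that the complement $\mathbb{R}^{+}\setminus\mathbb{Z}=\{\infty\}\cup\bigcup_{n\in\mathbb{Z}}\,]n,n+1[$ is open in $\mathbb{R}^{+}$: the set $\{\infty\}\cup\bigcup_{|n|>N}\,]n,n+1[$ is the complement of the compact set $[-N,N]\cap\mathbb{Z}$ hence open, and adjoining the remaining finitely many intervals keeps it open. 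Consequently $\mathbb{Z}$ remains closed in $\mathbb{R}^{+}$, and every Cauchy sequence in $\mathbb{Z}$ is eventually constant, so $\mathbb{Z}$ is complete. The statement about boundary/closure in (ii) then follows formally from the general identity $\overline{A}=A\cup A'$ once (iii) is known.

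The only genuinely new point is (iii). I would prove $\mathbb{Z}'=\{\infty\}$ in two steps. First, any finite $p\in\mathbb{R}$ admits, exactly as in Lemma~C\hskip-0.5pt\ref{appendix-c-c1lemma}(iii), an open interval $]\alpha,\beta[\subset\mathbb{R}$ around $p$ containing no element of $\mathbb{Z}\setminus\{p\}$, and this interval is also open in $\mathbb{R}^{+}$, so $p\notin\mathbb{Z}'$. Second, any open neighbourhood $U$ of $\infty$ in $\mathbb{R}^{+}$ has the form $U=\mathbb{R}^{+}\setminus K$ for some compact $K\subset\mathbb{R}$. By Heine--Borel $K$ is bounded, hence contained in some $[-N,N]$, and therefore $U$ contains every integer $n$ with $|n|>N$; in particular $U\cap(\mathbb{Z}\setminus\{\infty\})\neq\varnothing$, showing $\infty\in\mathbb{Z}'$.

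The main obstacle, in so far as there is one, is simply the step identifying $\infty$ as an accumulation point, which reduces to the elementary Heine--Borel fact that compact subsets of $\mathbb{R}$ are bounded; all other verifications are either inherited pointwise from Lemma~C\hskip-0.5pt\ref{appendix-c-c1lemma} or follow from standard closure/interior/accumulation identities once the topology of $\mathbb{R}^{+}$ is made explicit.
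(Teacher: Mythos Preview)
Your proof of the key new assertion $\mathbb{Z}'=\{\infty\}$ is correct and is the only substantive point; the paper itself offers nothing beyond ``by similarity with Lemma~C\hskip-0.5pt\ref{appendix-c-c1lemma}''. However, your treatment of (ii) and (iv) contains an error that is in fact inconsistent with your own argument for (iii).

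In (iv) you assert that $\{\infty\}\cup\bigcup_{|n|>N}\,]n,n+1[$ is the complement of the compact set $[-N,N]\cap\mathbb{Z}$ in $\mathbb{R}^{+}$; it is not, since that complement contains every integer $m$ with $|m|>N$. In fact no Alexandrov-open neighbourhood of $\infty$ can avoid $\mathbb{Z}$: any such neighbourhood has the form $\mathbb{R}^{+}\setminus K$ with $K$ compact, hence bounded, and this is precisely your (correct) Heine--Borel argument for (iii). Consequently $\mathbb{R}^{+}\setminus\mathbb{Z}$ is not open and $\mathbb{Z}$ is not closed in $\mathbb{R}^{+}$. Likewise, your own identity $\overline{A}=A\cup A'$ combined with $\mathbb{Z}'=\{\infty\}$ yields $\overline{\mathbb{Z}}=\mathbb{Z}\cup\{\infty\}\neq\mathbb{Z}$, so (ii) fails as well rather than ``following formally''. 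Thus the lemma as literally stated---that only (iii) changes---is not correct: properties (ii) and (iv) also change upon passing to $\mathbb{R}^{+}$, and your attempt to verify them could not have succeeded.
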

\begin{proof}
The proof can be conduced directly by similarity with the one of Lemma C\hskip-0.5pt\ref{appendix-c-c1lemma}.
\end{proof}
By using Lemma C\hskip-0.5pt\ref{appendix-c-c1lemma} and Lemma C\hskip-0.5pt\ref{appendix-c-c2lemma} we get that $\hat \zeta_{trivial}$ has $\infty$ as accumulation point of its set of zeros on $\Gamma_{\frac{1}{2}}$. In fact this set has the same cardinality $\aleph_0$ of $\mathbb{Z}$. Therefore, by using the property ($\bigstar$), we can conclude that $\hat \zeta_{trivial}$ cannot be a holomorphinc mapping $\hat S^1\to\hat S^1$, hence neither a meromorphic one.\footnote{Let us emphasize that in our proof it does not necessitate assume that $\tilde\zeta$ has only zeros on the critical line. In fact, whether $\tilde\zeta$ should have also some zeros outside the critical line, the set $\{\hat\zeta_{trivial}^{-1}(0)\}$ should yet admit $\infty$ as accumulation point. So whether $\hat \zeta_{trivial}$ should be holomorphic it should be constant in a neighborhood of $\infty$, and hence, everywhere. But $\hat \zeta_{trivial}$ is not constant. Therefore, also admitting zeros outside the critical lines, $\hat \zeta_{trivial}$ cannot be holomorphic or meromorphic.}

\appendix{\bf Appendix D: Complementary subjects supporting the proof of Theorem \ref{main-proof}.}\label{appendixd}
\renewcommand{\theequation}{D.\arabic{equation}}
\setcounter{equation}{0}  

In this appendix we shall recall some complementary information supporting some statements used in the proof of Theorem \ref{main-proof} (main-proof). Then for such information we refer to some well-known early results about the Riemann hypothesis. Furthermore, here we shall also show that the existence of the homotopic morphism between $\tilde\zeta$ and $\hat\zeta$ works well also when the cardinality of the outside-ghost zeros set of $\tilde\zeta$ is at the beginning assumed to be $\aleph_0$.

$\bullet$\hskip 2pt {\bf Zero-free region - }In the main-proof we have assumed that whether some nontrivial zeros of $\zeta(s)$ (and hence of $\tilde\zeta(s)$) should exist critical zeros in the critical strip $\Xi$ ($0<\Re(s)<1$) they should belong to a sub-strip $\Xi_0$ strictly contained into $\Xi$. In other words non-trivial zeros cannot be too close to the boundary of $\Xi$, identified by the lines $s=0$ and $s=1$. This is a well-known result first obtained by De la Vall\'ee-Poussin \cite{DELAVALLE-POUSSIN} that in particular proved that if $\sigma+it$ is a zero of $\zeta(s)$, then $1-\sigma\ge \frac{C}{\log(t)}$ for some positive constant $C$. This zero-free region has been enlarged by several authors. For more details about see, e.g.,
 $\href{http://en.wikipedia.org/wiki/Riemann_hypothesis}{http://en.wikipedia.org/wiki/Riemann_hypothesis}$.

$\bullet$\hskip 2pt {\bf Numerical calculations - } Numerical calculations made with $\tilde\zeta(s)$ show that the first zeros that are present in critical strip $\Xi$ are present only on the critical line $\Xi_c$. Therefore one can without problem assume that the first two zeros, nearest to the $x$-axis are necessarily on the critical line $s=\frac{1}{2}\in\mathbb{C}$. (For details about see, e.g., $\href{http://en.wikipedia.org/wiki/Riemann_hypothesis}{http://en.wikipedia.org/wiki/Riemann_hypothesis}$.)

$\bullet$\hskip 2pt {\bf Infinite outside-ghost zeros assumption - } In the main-proof we have assumed that the number of zeros of $\tilde\zeta(s)$, outside the critical line is finite. However, the proof works well also by assuming that such a set has cardinality $\aleph_0$. In fact, if we accept this guess we can yet prove that $\tilde\zeta$ and $\hat\zeta$ are related by a homotopic morphism similarly to what made at page \pageref{divisor-homotopy}. Really let us assume that $\tilde\zeta$ has the divisor reported in (\ref{appendix-d-infinite-divisor-guess}).
\begin{equation}\label{appendix-d-infinite-divisor-guess}
    (\tilde\zeta)=\sum_{k\ge 1}1\cdot q_{s_k}+\sum_{j\ge 0}1\cdot q_{\pm z_j}-1\cdot q_0-1\cdot q_1
\end{equation}
where $q_{s_k}$ are the outside-ghost zeros and $q_{\pm z_j}$ are the critical-zeros of $\tilde\zeta$. Then let us consider the continuous deformation of the strip $\Xi_0$, where are contained all the zeros, onto the critical line $\Xi_c$. This deformation is characterized by a continuous flow $\phi_t:\mathbb{C}\to\mathbb{C}$, $(\phi^1_t,\phi^2_t)=(x\circ\phi_t,y\circ\phi_t)$, given in (\ref{appendix-d-continuous-flow-deforming-zeros-strip}).
\begin{equation}\label{appendix-d-continuous-flow-deforming-zeros-strip}
  \phi_t(x,y)=\left\{
  \begin{array}{ll}
    \phi^1_t(x,y) &=\left\{ \begin{array}{ll}
                              x&,\, 1-a<x<a  \\
                              x+t(\frac{1}{2}-x)&,\,  a\le x\le \frac{1}{2} \\
                              x-t(x-\frac{1}{2})&,\, \frac{1}{2}\le x\le 1-a \\
                            \end{array}\right\}\\
    \phi^2_t(x,y)& =y
  \end{array}
  \right.
\end{equation}
where $x=a$ and $x=1-a$ are the equations of the boundaries of the strip $\Xi_0$. The corresponding velocity $\dot\phi(x,y)=\dot\phi^1(x,y)\partial x+\dot\phi^2(x,y)\partial y$, has the components given in (\ref{appendix-d-velocity-continuous-flow-deforming-zeros-strip}).
\begin{equation}\label{appendix-d-velocity-continuous-flow-deforming-zeros-strip}
     \dot\phi(x,y)=\left\{
  \begin{array}{ll}
    \dot\phi^1(x,y) &=\left\{ \begin{array}{ll}
                              0&,\, 1-a<x<a  \\
                              \frac{1}{2}-x&,\,  a\le x\le \frac{1}{2} \\
                              \frac{1}{2}-x&,\, \frac{1}{2}\le x\le 1-a \\
                            \end{array}\right\}\\
    \dot\phi^2(x,y)& =0.
  \end{array}
  \right.
\end{equation}
Therefore the continuous flow $\phi_t$, $t\in[0,1]$, is singular, with the following set of singular points: $\{(a,y),(\frac{1}{2},y),(1-a,y)\}$, i.e., the right lines parallel to the $y$-axis, coinciding with the boundaries of $\Xi_0$ and the critical line $\Xi_c$ respectively.

We get the exact and commutative diagram (\ref{appendix-d-exact-commutative-diagram-infinite-divisor-guess})

\begin{equation}\label{appendix-d-exact-commutative-diagram-infinite-divisor-guess}
\xymatrix@C=1cm{
  \mathbb{C}\ar[d]_{\tilde \zeta}\ar[r]^{\phi_t}&
  \mathbb{C}\ar[d]^{\tilde\zeta_t}\ar[r]&0\\
  \hat S^1\ar[d]\ar[r]_{\phi'_t}&\hat S^1\ar[d]\ar[r]&0\\
  0&0&\\}
\end{equation}
where $\tilde\zeta_t$ is the meromorphic function on $\mathbb{C}$ associated to the divisor $D_t$ reported in (\ref{appendix-d-deformed-infinite-divisor-guess}).
\begin{equation}\label{appendix-d-deformed-infinite-divisor-guess}
    D_t=\sum_{k\ge 1}1\cdot q_{s_{k,t}}+\sum_{j\ge 0}1\cdot q_{\pm z_j}-1\cdot q_0-1\cdot q_1
\end{equation}
where $q_{s_{s,t}}=\phi_t( q_{s_k})$. The existence of the meromorphic functions $\tilde\zeta_t$ with divisor $D_t$ is assured by the Weierstrass' theorem. (See, e.g., \cite{HAZEWINKEL}.) Then $(\tilde\zeta_0)=(\tilde\zeta)$ and $(\tilde\zeta_1)=\sum_{\bar k\ge 1}1\cdot q_{s_{\bar k}}+\sum_{j\ge 0}1\cdot q_{\pm z_j}-1\cdot q_0-1\cdot q_1$, where $q_{s_{\bar k}}=\phi_1(q_{s_{k}})$. Then we can apply to $\tilde\zeta_1$ the morphism $(\hat b,\hat b')$ in such a way to obtain the exact and commutative diagram (\ref{appendix-d-exact-commutative-diagram-infinite-divisor-guess-a})
\begin{equation}\label{appendix-d-exact-commutative-diagram-infinite-divisor-guess-a}
\xymatrix@C=1cm{\mathbb{C}\ar[d]_{\tilde \zeta_1}\ar[r]^{\underline{\hat b}_1}&
  \hat S^1\ar[d]^{\hat\zeta}\ar[r]&0\\
  \hat S^1\ar[d]\ar[r]_{\underline{\hat b}'_1}&\hat S^1\ar[d]\ar[r]&0\\
  0&0&\\}
  \end{equation}

that works since $\tilde\zeta_1$ has all zeros on the critical line. Then we get the homotopy morphism relating $\tilde\zeta$ and $\hat\zeta$ given in (\ref{appendix-d-exact-commutative-diagram-infinite-divisor-guess-b}).
\begin{equation}\label{appendix-d-exact-commutative-diagram-infinite-divisor-guess-b}
\hat\beta_t=(\hat b_t,\hat b'_t)=\left\{
\begin{array}{ll}
  (\phi_t,\phi'_t)& t\in[0,1) \\
  (\underline{\hat b}_t\circ\phi_t,\underline{\hat b}'_t\circ\phi'_t)& t=1.\\
\end{array}
\right.
\end{equation}

Therefore, similarly to the diagram (\ref{exact-commutative-diagram-homotopy-zeros}) we get the commutative diagram (\ref{appendix-d-exact-commutative-diagram-infinite-divisor-guess-c}) that realizes the homotopic morphism between $\tilde\zeta$ and $\hat\zeta$.

 \begin{equation}\label{appendix-d-exact-commutative-diagram-infinite-divisor-guess-c}
\xymatrix{\mathbb{C}\ar@/^4pc/[rrrr]^{\hat b_1}\ar@/^3pc/[rrr]_(0.8){\phi_1}\ar[d]_{\tilde \zeta}\ar[r]_{\hat b_t=\phi_t}&\mathbb{C}\ar[d]^{\tilde\zeta_t}\ar[r]&\cdots\ar[r]&\mathbb{C}\ar[d]_{\tilde\zeta_1}\ar[r]_{\underline{\hat b}_1}&
  \hat S^1\ar[d]^{\hat\zeta}\ar[r]&0\\
  \hat S^1\ar@/_5pc/[rrrr]_{\hat b'_1}\ar@/_3pc/[rrr]^(0.8){\phi'_1}\ar[d]\ar[r]_{\hat b'_t=\phi'_t}&\hat S^1\ar[d]\ar[r]&\cdots\ar[r]&\hat S^1\ar[r]^{\underline{\hat b}'_1}\ar[d]&\hat S^1\ar[d]\ar[r]&0\\
  0&0&&0&0\\}
  \end{equation}

  Therefore also under the guess that $\tilde\zeta$ can have infinite outside-ghost zeros, we are able to prove that $\hat\Omega_\bullet(\tilde\zeta)\cong\hat\Omega_\bullet(\hat\zeta)$. Then the same conclusion made for the case of finite outside-ghost zeros follows.

\appendix{\bf Appendix E: Proof of the isomorphism $\mathbf{\hat\Omega_\bullet(\hat\zeta)\cong\hat\Omega_\bullet(\tilde\zeta)}$.}\label{appendixe}
\renewcommand{\theequation}{E.\arabic{equation}}
\setcounter{equation}{0}  

In this appendix we give a detailed proof that the homomorphism $\hat\Omega_\bullet(\hat\beta_1):\hat\Omega_\bullet(\tilde\zeta)\to\hat\Omega_\bullet(\hat\zeta)$, induced by the homotopy morphism $\hat\beta_t$, for $t=1$, as considered in the diagrams (\ref{exact-commutative-diagram-homotopy-zeros}) and (\ref{short-exact-sequences}), is an isomorphism.  Let us first prove that $\hat\Omega_\bullet(\hat\beta_1)$ is a monomorphism. For this it is enough to prove that if two morphisms $\alpha_i=(a_i.a'_i):\hat f\to \tilde\zeta$, $i=1,2$, with $a_i$, $a'_i$, closed  quantum-complex maps, are not equivalent, the induced morphisms

\begin{equation}\label{appendix-e-intermediate-calculation-a}
\beta_i=(\hat d_i=\hat b_i\circ a_i,\hat d'_i=\hat b'_i\circ a'_i):\hat f_i\to\hat\zeta,\, \left\{
\begin{array}{l}
  a_i:M_i\to \hat S^1 \\
  a'_i:N_i\to \hat S^1 \\
  \hat f_i:M_i\to N_i\\
\end{array}
\right\}
\end{equation}

cannot be equivalent too, with respect to $\hat\zeta$. In fact, if they should be equivalent, with respect to $\hat\zeta$, it should exist a morphism

\begin{equation}\label{appendix-e-intermediate-calculation-b}
\Phi=(c,c'):(\hat\phi,\hat V,\hat W)\to\hat\zeta,\, \left\{
\begin{array}{l}
  \partial \hat V=M_1\sqcup M_2 \\
\partial \hat W=N_1\sqcup N_2 \\
\hat\phi|_{\partial\hat V} =\hat f_1\sqcup \hat f_2:M_1\sqcup M_2\to N_1\sqcup N_2
\end{array}
\right\}.
\end{equation}

On the other hand, taking into account the fibration structures $\hat b_1:\mathbb{C}\to \hat S^1$ and $\hat b'_1:\hat S^1\to \hat S^1$ one can build the commutative diagram (\ref{appendix-e-commuatative-diagram-monomorphism}).

\begin{equation}\label{appendix-e-commuatative-diagram-monomorphism}
   \xymatrix{&\hat S^1\ar[r]^{\hat\zeta}&\hat S^1&\\
   M_i\ar@/_1pc/[rrr]_{\hat f_i}\ar[ur]^{\hat d_i=\hat b_i\circ a_i}\ar[r]^{a_i}\ar@{^{(}->}[d]& \mathbb{C}\ar[u]^{\hat b_1}\ar[r]^{\tilde\zeta}&\hat S^1\ar[u]_{\hat b'_1}&N_i\ar[l]_{a'_i}\ar@{^{(}->}[d]\ar[ul]_{\hat d'_i=\hat b'_i\circ a'_i}\\
   \hat V\ar@{=}[d]\ar@/^5pc/[uur]^{c}\ar[ur]^{l}\ar[rrr]_{\hat\phi}&&&\hat W\ar@{=}[d]\ar[ul]_{l'}\ar@/_5pc/[uul]_{c'}\\
   \frame{\includegraphics[height=0.8cm]{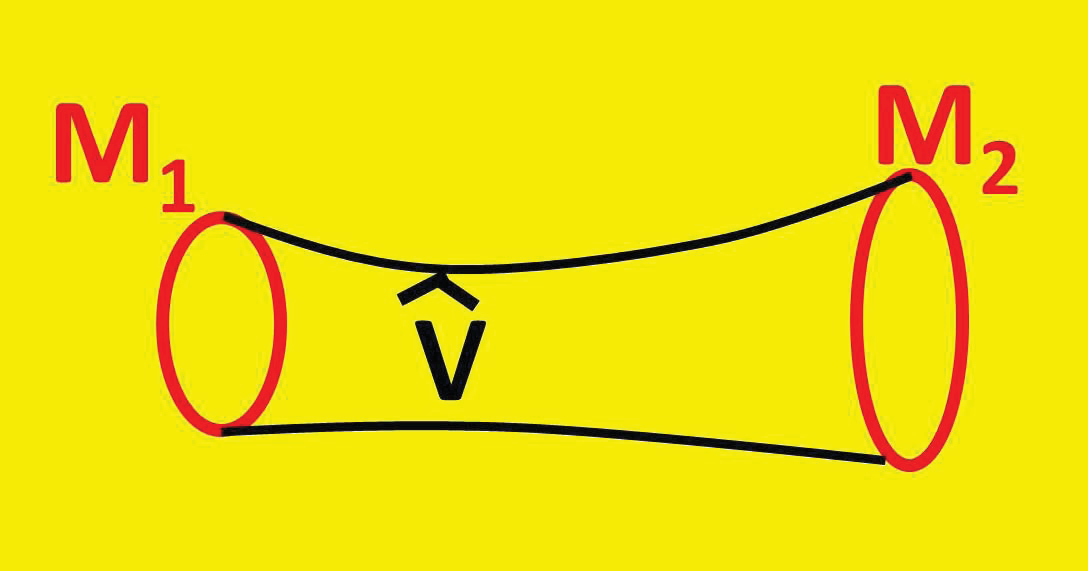}}\ar[rrr]_{\hat\phi}&&&\frame{\includegraphics[height=0.8cm]{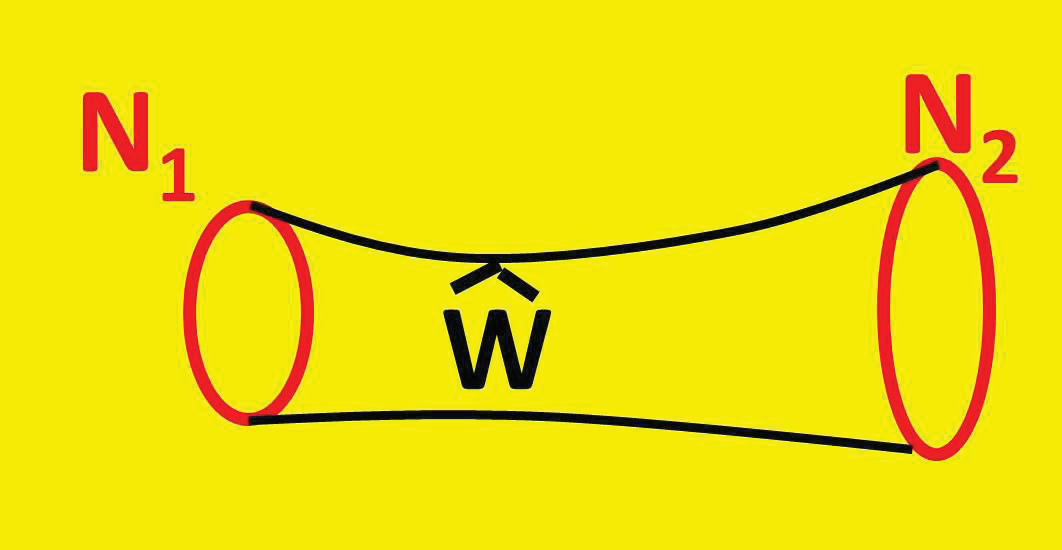}}}
\end{equation}
More precisely, taking into account the fibration structures $\hat b_1:\mathbb{C}\to \hat S^1$ and $\hat b'_1:\hat S^1\to \hat S^1$, we can (non-canonically) identify liftings $l:\hat V\to \mathbb{C}$, $l':\hat W\to \hat S^1$ such that $c=\hat b_1\circ l$ and $c'=\hat b'_1\circ l'$. Then since $c'\circ\hat\phi=\hat\zeta\circ c$, we get
\begin{equation}\label{appendix-e-intermediate-calculation-c}
    \left\{\begin{array}{ll}
      \hat b'_1\circ l'\circ\hat\phi &= \hat\zeta\circ\hat b_1 \circ l \\
      &= \hat b'_1\circ\tilde\zeta\circ l \\
    \end{array}\right\} \, \Rightarrow \, l'\circ\hat\phi=\tilde\zeta\circ l.
\end{equation}

In other words $\hat f_i$, $i=1.2$, should be equivalent also with respect to $\tilde\zeta$, despite the assumption that they are not so. Therefore, the homomorphism $\hat\Omega_\bullet(\hat\beta_1)$ is a monomorphism.

To prove that $\hat\Omega_\bullet(\hat\beta_1)$ is an epimorphism it is enough to use again the property of the fibration structures $\hat b_1:\mathbb{C}\to \hat S^1$ and $\hat b'_1:\hat S^1\to \hat S^1$. In fact, for any equivalence class of $\hat\Omega_\bullet(\hat\zeta)$, encoded by a morphism $\alpha=(a,a'):\hat f\to\hat\zeta$, we can identify liftings of $h$ and $h'$ respectively such that $\hat b_1\circ h=a$, $\hat b'_1\circ h'=a'$. (See commutative diagram (\ref{appendix-e-commuatative-diagram-monomorphism-a}).) Then one can see, by utilizing some steps similar to previous ones, that $\gamma=(h,h'):\hat f\to\tilde\zeta$. In other words $\hat f$ identifies also an equivalence class of $\hat\Omega_\bullet(\tilde\zeta)$. By conclusion since $\hat\Omega_\bullet(\hat\beta_1)$ is an epimorphism and a monomorphism, it follows that $\hat\Omega_\bullet(\hat\beta_1)$ is an isomorphism.

\begin{equation}\label{appendix-e-commuatative-diagram-monomorphism-a}
\xymatrix{&0&0&\\
&\hat S^1\ar[r]^{\hat\zeta}\ar[u]&\hat S^1\ar[u]&\\
&\mathbb{C}\ar[u]^{\hat b_1}\ar[r]^{\tilde\zeta}&\hat S^1\ar[u]^{\hat b'_1}&\\
M\ar@/^2pc/[uur]^{a}\ar[ur]_{h}\ar[rrr]_{\hat f}&&&N\ar@/_2pc/[uul]^{a'}\ar[ul]^{h'}\\}
\end{equation}
To conclude this appendix let us note that the surjective morphism $\hat\beta_1:\tilde\zeta\to\hat\zeta$ exists also in the case when one assumes that $\tilde\zeta$ has infinite zeros outside the critical line. (See Appendix D.) Therefore above arguments apply also in this case to prove the isomorphism $\hat\Omega_\bullet(\tilde\zeta)\cong \hat\Omega_\bullet(\hat\zeta)$. Finally, whether $\tilde\zeta$ has not zeros outside the critical line, then $\hat\beta_1$ can be substituted in above calculations with $\hat\beta$.
\end{appendices}

\end{document}